\newtheorem{thm}{Theorem}[section]
\newtheorem{prop}[thm]{Proposition}
\newtheorem{lem}[thm]{Lemma}
\newtheorem{cor}[thm]{Corollary}
\newtheorem{defn}[thm]{Definition}
\newtheorem{ex}[thm]{Example}
\newtheorem{rem}[thm]{Remark}
\newtheorem{conj}[thm]{Conjecture}
\newtheorem{prob}[thm]{Problem}
\newcommand{\skipit}[1]{{}}
\newcommand{\prfend}{\hbox to7pt{\hfil}
\par\vskip-\baselineskip\hbox to\hsize
{\hfil\vbox {\hrule width6pt height6pt}}\vskip\baselineskip}
\newcommand{\ZZ}{\mathbb{Z}}
\newcommand {\PP}{\mathbb{P}}
\newcommand{\cL}{\mathcal{L}}
\newcommand{\cP}{\mathcal{P}}
\newcommand{\cA}{\mathcal{A}}
\newcommand{\cB}{\mathcal{B}}
\newcommand{\cO}{\mathcal{O}}
\newcommand{\cH}{\mathcal{H}}
\newcommand{\cD}{\mathcal{D}}
\newcommand{\cF}{\mathcal{F}}
\DeclareMathOperator{\Image}{Im}
\DeclareMathOperator{\Proj}{Proj}
\DeclareMathOperator{\degree}{degree}
\DeclareMathOperator{\depth}{depth}
\DeclareMathOperator{\pd}{pd}
\DeclareMathOperator{\rk}{rank}
\DeclareMathOperator{\codim}{codim}
\newcommand{\myarrow}[2]{\hbox to #1pt{\hfil$\to$\hfil}{\hskip-#1pt{\raise
10pt\hbox to#1pt{\hfil$\scriptscriptstyle #2$\hfil}}}}
\begin{document}

\title{Togliatti systems and Galois coverings}
\author[Emilia Mezzetti]{Emilia Mezzetti}
\address{Dipartimento di Matematica e  Geoscienze, Sezione di Matematica e Informatica, Universit\`a di
Trieste, Via Valerio 12/1, 34127 Trieste, Italy}
\email{mezzette@units.it}

\author[Rosa M. Mir\'o-Roig]{Rosa M. Mir\'o-Roig}
\address{Facultat de Matem\`atiques i Inform\`{a}tica,
Departament de Matem\`{a}tiques i Inform\`{a}tica, Gran Via de les Corts Catalanes
585, 08007 Barcelona, Spain}
\email{miro@ub.edu}

\begin{abstract} We study the homogeneous artinian ideals of the polynomial ring $K[x,y,z]$ generated by the   homogenous polynomials of degree $d$ which are invariant under an action of the cyclic group $\ZZ/d\ZZ$, for any $d\geq 3$. We prove that they are all monomial Togliatti systems, and that they are minimal if the action is defined by a diagonal matrix having on the diagonal $(1, e, e^a)$, where $e$ is a primitive $d$-th root of the unity. We get a complete description when  $d$ is prime or a power of a prime. We also establish the relation of these systems with linear Ceva configurations.
\end{abstract}

\thanks{Acknowledgments:   The first author is member of INdAM - GNSAGA and is supported by PRIN
``Geometria delle variet\`a algebriche'' and by
FRA, Fondi di Ricerca di Ateneo, Universit\`a di Trieste. The second  author was partially   supported
by  MTM2013-45075-P.
\\ {\it Key words and phrases.} Togliatti systems, weak Lefschetz
property, Galois coverings,
toric varieties.
\\ {\it 2010 Mathematics Subject Classification.} 13E10, 14M25,14N05,14N15, 53A20}

\maketitle

\tableofcontents

\markboth{E. Mezzetti, R. M. Mir\'o-Roig}{Togliatti
systems and Galois coverings}

\today

\large

\section{Introduction}

The Togliatti surface in $\PP^5$ is the rational surface parametrized by the cubic monomials in three variables $x^2y, x^2 z, xy^2, xz^2, y^2 z, yz^2$. It was introduced and studied by Eugenio Togliatti in his two articles \cite{T1}, \cite{T2} about rational surfaces satisfying Laplace equations. The apolar  system of cubics, i.e. the ideal $I_3$ generated by the cubic monomials $x^3, y^3, z^3, xyz$,  has the remarkable property of being the only homogeneous ideal, generated by four cubics of the form $x^3, y^3, z^3, f$, failing the Weak Lefschetz Property (\cite{BK}). This twofold example has lead the authors of this note, together with G. Ottaviani, to establish the connection between the geometric notion of variety satisfying Laplace equations and the algebraic notion of artinian ideal failing the Weak Lefschetz Property, thanks to the notion of Togliatti system,  introduced  in \cite{MMO}, see Definition \ref{togliattisystem}.

Togliatti systems of cubics have been completely described in \cite{MMO} and \cite{MRM}. For degree $d>3$, the picture quickly becomes much more complicated and only partial results are known so far, see for instance \cite{MM} containing a characterization of Togliatti systems with \lq\lq low'' number of generators.

An interesting property of the ideal $I_3$ is that the associated morphism $\varphi: \PP^2 \rightarrow \PP^3$ is a cyclic Galois cover of degree $3$ of the image surface, such that for a general line $L\subset \PP^2$ the inverse image of $\varphi(L)$ is a union of three lines in general position. This observation has been exploited to give a new beautiful  proof of the theorem of Togliatti
(see \cite{V2}, Theorem 2.2.1).

In this article we start from this observation to construct a new class of examples of Togliatti systems in three variables of any degree $d$, and by consequence of rational surfaces parametrized by polynomials of degree $d$ satisfying a Laplace equation of order $d-1$. We call them GT-systems in honour of Galois and Togliatti. Precisely, we consider the Galois cyclic covers with domain $\PP^2$, with cyclic group $\ZZ/d\ZZ$, for any $d\geq 3$ and any representation of $\ZZ/d\ZZ$ on $GL(3,K)$. We prove in Theorem \ref{upper} that they are all defined by monomial Togliatti systems of degree $d$. In other words, the ideal generated by the invariant homogeneous  polynomials of degree $d$ for any action of this type is a monomial Togliatti system. Moreover we prove in Theorem \ref{minimal} that the GT-systems are all minimal in the case of an action represented by a matrix of the form $M_a=\begin{pmatrix}
1&0&0\\
0&e&0\\
0&0&e^a
\end{pmatrix}$, where $e$ denotes a primitive $d$-th root of the unity and $GCD(a,d)=1$. In particular this is always the case when $d$ is prime or a power of a prime.

We then perform a detailed study of the Togliatti systems associated to the representation $M_a$, for any coprime $a$ and $d$, giving a complete description of the  classes and number of generators of the ideals of invariant degree $d$ polynomials $I_a$. We also describe for any degree $d$ the geometry of the toric surface $S_d$, image of the morphism defined by the monomials invariant for the action of $M_2$. These we call generalized classical Togliatti systems.

Finally, we also explain the interesting relations  between GT-systems and linear configurations in $\PP^2$, in particular Ceva configurations and their dual: Fermat arrangements.

\vskip 2mm
Let us briefly explain how this paper is organized. In Section \ref{prelim}, we fix the notation and basic facts needed later on. In particular, we recall the definition of smooth monomial minimal Togliatti system and we introduce new families of Togliatti systems, the so-called GT-systems, which will be our objects of study.
In Section \ref{Galois}, we establish the basic properties of a GT-system $I$ generated by all forms of degree $d$ invariants under the action of the matrix $M_{a,b,c}
=\begin{pmatrix}
e^a&0&0\\
0&e^b&0\\
0&0&e^c
\end{pmatrix}$, where $e$ denotes a primitive $d$-th root of the unity and $GCD(a,b,c,d)=1$.
Namely, $I$ is generated by monomials, its minimal number of generators $\mu (I)$ is bounded by $d+1$, it fails Weak Lefschetz Property from degree $d-1$ to degree $d$ and  it defines a Galois cover $\varphi _I:\PP^2 \longrightarrow \PP^{\mu(I)-1}$ with cyclic Galois group $\ZZ/d\ZZ$. Section \ref{minimality} is entirely devoted to study the minimality of GT-systems. As a main tool we use circulant matrices. In Section \ref{d prime}, we completely classify GT-systems in the case $d$ prime or a power of a prime, while in Section \ref{general} we give the complete classification for all $d$ of the actions represented by matrices of the form $M_a.$
In Section \ref{geomproperties}, we study from a geometric point of view the rational surfaces $S_d$ associated to  generalized classical GT-systems. We prove that the homogeneous ideal $I(S_d)$ is a Cohen-Macaulay ideal generated by quadrics and cubics if $d$ is odd and only by quadrics if $d$ is even. Finally, we describe the singular locus of $S_d$: the 3 fundamental points of $ \PP^2$ are sent to the singular points of $S_d$ which are cyclic quotient singularities.
Finally,  in Section \ref{config}, we establish the link between GT-systems and Ceva linear configurations $C(d)$; and we study the freeness of the arrangement $\cH _d$ of lines associated to $C(d)$.

\vskip 2mm
\noindent {\em Acknowledgement.} Many of the ideas for this paper were developed during a stay at BIRS (Banff International Research Station) and
the authors are very
grateful to BIRS for the financial support. We also thank J. Vall\`{e}s, F. Perroni, R. Pardini, A. Logar, M. Reid and A. Iarrobino
for interesting comments and conversations.


\section{Preliminaries}\label{prelim}

In this section we establish general results on Weak Lefschetz Property and linear configurations that will be used
along the remainder of this paper. Throughout this work  $K$ will be an
algebraically closed field of
characteristic zero
and $R=K[x_0,x_1,\ldots ,x_n]$.

\subsection{Togliatti systems}
\begin{defn}\label{def of wlp}\rm Let $I\subset R$ be a
homogeneous artinian ideal. We  say that   $R/I$
 has the {\em Weak Lefschetz Property} (WLP, for short)
if there is a linear form $L \in (R/I)_1$ such that, for all
integers $j$, the multiplication map
\[
\times L: (R/I)_{j} \to (R/I)_{j+1}
\]
has maximal rank, i.e.\ it is injective or surjective.
We will often abuse notation and say that the ideal $I$ has the
WLP.
\end{defn}

 Associated to any artinian
ideal $I$
generated by $r$
forms
  $F_1, \ldots ,F_r\in R$ of  degree $d$  there is a
morphism
$$\varphi _{I}:\PP^n \longrightarrow \PP^{r-1}.$$
Its image $X_{n,I_d}:={\Image (\varphi _{I_d})}\subset \PP^{r-1}$
is the projection of the $n$-dimensional Veronese variety $V(n,d)$
from the linear system $\langle(I^{-1})_d \rangle\subset \mid \cO
_{\PP^n}(d)\mid=R_d$ where
 $I^{-1}$ is the ideal generated by the Macaulay inverse system of $I$
(See \cite{MMO}, \S 3 for details).  Analogously, associated to
  $(I^{-1})_d$ there is a rational map
$$\varphi _{(I^{-1})_d}:\PP^n \dashrightarrow \PP^{{n+d
\choose d}-r-1}.$$ The closure of its image
$X_{n,(I^{-1})_d}:=\overline{\Image (
\varphi _{(I^{-1})_d})}\subset \PP^{{n+d\choose d}-r-1}$
is the projection of the $n$-dimensional Veronese variety
$V(n,d)$ from the linear system $\langle F_1,\ldots ,F_r \rangle \subset
\mid \cO _{\PP^n}(d)\mid=R_d$.

 We have (\cite{MMO}):

\begin{thm}\label{teathm} Let $I\subset R$ be an artinian
ideal
generated
by $r$ homogeneous polynomials $F_1,...,F_{r}$ of degree $d$.
If
$r\le {n+d-1\choose n-1}$, then
  the following conditions are equivalent:
\begin{itemize}
\item[(1)] The ideal $I$ fails the WLP in degree $d-1$,
\item[(2)] The  homogeneous forms $F_1,\ldots, F_{r}$ become
$k$-linearly dependent on a general hyperplane $H$ of $\PP^n$,
\item[(3)] The $n$-dimensional   variety
 $X=X_{n,(I^{-1})_d}$ satisfies at least one Laplace equation of order
$d-1$.
\end{itemize}
\end{thm}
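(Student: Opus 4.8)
The plan is to reduce all three conditions to the single algebraic statement that, for a general linear form $L$, there is a nonzero form $G\in R_{d-1}$ with $LG\in\langle F_1,\dots,F_r\rangle$; throughout I may assume that $F_1,\dots,F_r$ are $K$-linearly independent. First I would interpret (1) through a dimension count. Since $I$ is generated in degree $d$, we have $(R/I)_{d-1}=R_{d-1}$, so $\dim_K(R/I)_{d-1}=\binom{n+d-1}{n}$ while $\dim_K(R/I)_d=\binom{n+d}{n}-r$. By Pascal's rule the hypothesis $r\le\binom{n+d-1}{n-1}$ is exactly the inequality $\dim_K(R/I)_{d-1}\le\dim_K(R/I)_d$. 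Therefore the map $\times L\colon(R/I)_{d-1}\to(R/I)_d$ attains maximal rank if and only if it is injective, and $I$ fails the WLP in degree $d-1$ iff this map is non-injective for the general $L$; its kernel is precisely the set of $G\in R_{d-1}$ with $LG\in\langle F_1,\dots,F_r\rangle$, which is the statement above.

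Next, for the equivalence with (2), I would restrict to the hyperplane $H=V(L)$. A relation $LG=\sum_{i=1}^r c_iF_i$ is precisely the vanishing $\sum_i c_i\overline{F_i}=0$ of the restricted forms in $(R/(L))_d$, i.e.\ the $K$-linear dependence of $\overline{F_1},\dots,\overline{F_r}$ on $H$; here $G\ne 0$ corresponds to the $c_i$ not all vanishing, using the independence of the $F_i$. Since failing the WLP already refers to a general $L$ by the lower semicontinuity of the rank, the \lq\lq general hyperplane'' of (2) matches automatically.

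The geometric heart is the equivalence with (3). Here $X=X_{n,(I^{-1})_d}$ is the projection of the Veronese $V(n,d)$ from the center $\Lambda=\langle F_1,\dots,F_r\rangle\subset\PP(R_d)$. I would invoke the Terracini-type principle that the $(d-1)$-st osculating space of the projection at a general point is the image under projection of the $(d-1)$-st osculating space of $V(n,d)$, so that its dimension falls below the expected value $\binom{n+d-1}{n}-1$ — that is, $X$ satisfies a Laplace equation of order $d-1$ — exactly when $\Lambda$ meets that osculating space. Since the $(d-1)$-st osculating space of $V(n,d)$ at the point $[L^d]$ is $\PP(L\cdot R_{d-1})$, the condition $\Lambda\cap\PP(L\cdot R_{d-1})\ne\varnothing$ is the existence of a nonzero combination $\sum_i c_iF_i\in L\,R_{d-1}$, which is condition (2) once more.

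The main obstacle will be the dimension bookkeeping in this last step. One must verify that the expected osculating dimension $\binom{n+d-1}{n}-1$ does not exceed the ambient dimension $\binom{n+d}{n}-r-1$ of $X$ — which is again guaranteed by $r\le\binom{n+d-1}{n-1}$, so that a drop of dimension is a genuine Laplace equation rather than an artifact of a small ambient space — and that $\Lambda$ is in suitably general position relative to $V(n,d)$ so that the osculating space of the projection is computed by the Terracini formula. The apolarity dictionary identifying the inverse system $(I^{-1})_d$ that parametrizes $X$ with the orthogonal complement of $\langle F_1,\dots,F_r\rangle$, together with the generality of the osculating point $[L^d]$ and of the form $L$, must also be handled with care.
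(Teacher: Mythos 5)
Your argument is correct: the paper itself gives no proof of this theorem, only the citation to [MMO, Theorem~3.2], and your sketch is a faithful reconstruction of the argument given there — the Pascal-rule dimension count showing that under $r\le\binom{n+d-1}{n-1}$ maximal rank means injectivity, the identification of $\ker(\times L)$ with linear dependence of the $F_i$ on $V(L)$, and the apolarity/osculating-space computation identifying the $(d-1)$-st osculating space of $V(n,d)$ at $[L^d]$ with $\mathbb{P}(L\cdot R_{d-1})$ so that the projection from $\langle F_1,\dots,F_r\rangle$ drops osculating dimension exactly when that center meets it. No gaps to report.
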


\begin{proof} \cite{MMO}, Theorem 3.2.
\end{proof}

The above result motivates the following definition:

\begin{defn} \label{togliattisystem} \rm Let $I\subset R$ be an artinian
ideal
generated
by $r$ forms $F_1, \ldots ,F_{r}$ of degree $d$, $r\le {n+d-1\choose n-1}$.
We introduce the following definitions:
\begin{itemize}
\item[(i)] $I$  is a \emph{Togliatti system}
if it satisfies the three equivalent conditions in  Theorem \ref{teathm}.

\item[(ii)]   $I$ is a \emph{monomial Togliatti system} if,
in addition, $I$ (and hence $I^{-1}$) can be generated by monomials.

\item[(iii)]   $I$ is a \emph{smooth Togliatti system} if,
in addition, the $n$-dimensional   variety
$X$ is smooth.

\item[(iv)] A  monomial  Togliatti system $I$ is said to be
\emph{minimal} if $I$ is generated by monomials $m_1, \ldots ,m_r$ and
there is no proper subset $m_{i_1}, \ldots ,m_{i_{r-1}}$ defining a
monomial  Togliatti system.
    \end{itemize}
\end{defn}

The names are in honour of Eugenio Togliatti who proved that for $n=2$
the only
smooth  Togliatti system of cubics is
$I=(x^3,y^3,z^3,xyz)\subset K[x,y,z]$ (see \cite{BK}, \cite{T1},
\cite{T2}).

We underline an interesting geometric property of this example: the morphism
$$\varphi _{I}:\PP^2 \longrightarrow \PP^{3}$$
is a cyclic Galois covering of degree $3$ of the image, the cubic  surface  of equation
$x_0x_1x_2-x_3^3=0$. Moreover, for a general line $L\subset \PP^2$, the inverse image of
$\varphi _{I}(L)$ is a union of three lines.
This observation can be exploited to give a new beautiful  proof of the theorem of Togliatti
(see \cite{V2}, Theorem 2.2.1).

This example admits a family of generalizations in all degrees, sharing the property
of defining a cyclic Galois
covering, as we see in the next definition.

\begin{defn} \label{genclassicalTogliatti} \rm Fix an integer
$d=2k+\epsilon$, $0\le \epsilon \le 1$. The  monomial artinian ideal
$I=(x^d, y^d, z^d,xy^{d-2}z , x^2y^{d-4}z^2,\ldots ,
x^ky^{\epsilon}z^k)\subset K[x,y,z]$ defines a monomial Togliatti
system that
we call  {\em generalized classical Togliatti system}. Clearly for $d=3$ we recover
Togliatti's example.
\end{defn}

Any such ideal
defines a Galois covering of degree $d$
 $$\varphi _I:\PP^2\longrightarrow \PP^{k+2}$$
of the surface $\varphi_I(\PP^2)$, with cyclic
Galois group $\ZZ/d\ZZ$ represented by the matrix  $\begin{pmatrix}
1&0&0\\
0&e&0\\
0&0&e^2
\end{pmatrix} ,$  where $e$ is a primitive $d$-th root of $1$. This is not
true for {\em all} monomial Togliatti systems, as shown for instance by the ideal
 $I=(x^5,y^5,z^5,x^4y,x^4z)\subset K[x,y,z]$. It  is a monomial Togliatti system
and its  associated regular map $\varphi _I:\PP^2\longrightarrow\PP^4$
is  a birational morphism.

\begin{rem}\rm Note that, for $d$ odd, all monomials different from
$x^d$, $y^d$, $z^d$ in a generalized classical Togliatti system contain
all variables with strictly positive exponent. Therefore, applying the
smoothness criterion for toric varieties (see \cite{MM}, Proposition
3.4), we get that, for $d$ odd, the generalized classical Togliatti systems
are smooth. This is no longer true for $d$ even.
\end{rem}

So, we are let to
pose the following problem:

\begin{prob} \rm  Classify all
Togliatti systems
$I\subset K[x,y,z]$ generated by forms of degree $d$,
whose associated regular map is a Galois covering
with cyclic Galois group $\ZZ/d\ZZ$. We will call them {\em GT-systems.}
\end{prob}
Since any representation of $\ZZ/d\ZZ$ on $GL(3,\mathbb C)$ can be diagonalized (see for instance \cite{Se}),
we can assume that it is represented by a matrix of the form $M_{a,b,c}:=\begin{pmatrix}
e^a&0&0\\
0&e^b&0\\
0&0&e^c
\end{pmatrix} ,$  where $e$ is a primitive $d$-th root of $1$, and $GCD(a,b,c,d)=1$.

The goal of next section will be to prove our first main result that for any integer $d$, any ideal generated by all forms of degree $d$ invariant under the action of $M_{a,b,c}$ is a monomial GT-system (see Theorem 3.1, Proposition 3.2 and Theorem 3.4). In Section \ref{minimality}, we will study the minimality of GT-systems (see Theorem  \ref{minimal}).
In Section  \ref{d prime} we classify all GT-systems $I\subset
K[x,y,z]$ generated by forms of degree $d$ with $d$ a prime integer, or
a power of a prime and we postpone until \S 6 the study of GT-systems
$I\subset K[x,y,z]$ generated by forms of arbitrary degree $d$. Finally, we devote Sections \ref{config}  and \ref{geomproperties} to study their geometric properties.

\subsection{Linear configurations}

We will finish this section recalling the definition of linear configuration and we will prove later that associated to any GT-systems there always exists a linear configuration (see Theorem \ref{configurations}). The reader can see \cite{D} for more details about linear configurations in Algebraic Geometry.

\begin{defn} \rm
  A {\em  linear configuration} $C$ in $\PP^2$ is  a finite set of
points, and a finite arrangement of lines, such that each point is
incident to the same number of lines and each line is incident to the
same number of points.
A linear configuration $C$ in the plane will be denoted by $(p_s,\ell _r)$
where $p$ is the number of points, $\ell $ is the number of lines, $s$
is the number of lines per point, and $r$ is the number of points per
line. These numbers necessarily satisfy the equation $p\cdot s=\ell
\cdot r$.
\end{defn}

\begin{figure}[!h]
  \centering
  \includegraphics[width=2in,angle=90]{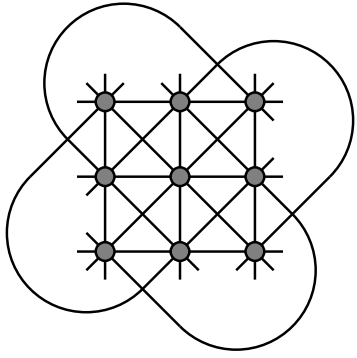}
  \caption{Hesse configuration}
  \label{fig:hesse}
\end{figure}

\begin{ex}\rm (1) $(4_3,6_2)$ is the complete quadrangle.

(2) $(9_4,12_3)$ is a Hesse configuration (see Figure 1).


(3) $(12_4,16_3)$ is a Reye configuration.

(4) For any $d\ge 3$, $C(d)=(3d_d,d^2_3)$ is a Ceva configuration.
\end{ex}


\section{Galois covers}\label{Galois}

Let $I\subset K[x,y,z]$ be an artinian ideal generated by forms of degree $d\ge 3$. We denote by $\mu(I)$
the minimal number of generators of $I$.
In this section,  we establish the main properties on  Galois covers
$$\varphi _I:\PP^2\longrightarrow \PP^{\mu(I)-1}$$ with cyclic Galois
group $\ZZ/d\ZZ$ represented by a matrix $$M_{a,b,c}:=\begin{pmatrix}
e^a&0&0\\
0&e^b&0\\
0&0&e^c
\end{pmatrix} $$  where $e$ is a primitive $d$-th root of $1$, and $0\le a\le b\le c\leq d-1$.
We observe that in order  that the subgroup of $GL(3)$ generated by $M_{a,b,c}$ is cyclic of order $d$,  the greatest common divisor of $a,b,c,d$ has to be $1$.
We note that the surface $S=\varphi_I(\PP^2)$ is the weighted projective plane $\PP(a,b,c)$, that has been extensively studied in the context of classification  of singularities of surfaces (\cite{R}, \cite{BPV}, \cite{CLS}).

First, for sake of completeness, we
give the proof that the artinian ideal $I$ associated to such Galois cover is
monomial. Indeed, we have:

\begin{thm} \label{galoismonomial}
 Fix $3\le d\in \ZZ$, $e$ a primitive $d$-th root of 1 and
$M_{a,b,c}$
a representation of $\ZZ/d\ZZ$. The ideal $I\subset K[x,y,z]$ generated
by all forms of degree $d$ invariant
under the action of $M_{a,b,c}$
is monomial. In particular, any GT-system is monomial.
\end{thm}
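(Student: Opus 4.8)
The plan is to prove that $I$, the ideal of all degree-$d$ forms invariant under $M_{a,b,c}$, is monomial by exhibiting a monomial generating set via the character decomposition of the $\ZZ/d\ZZ$-action on the space of degree-$d$ forms. First I would observe that $M_{a,b,c}$ acts linearly on $R_d = K[x,y,z]_d$ by $x^i y^j z^k \mapsto e^{ai+bj+ck} x^i y^j z^k$, so every monomial is an eigenvector: the generator of $\ZZ/d\ZZ$ acts on the monomial $x^iy^jz^k$ (with $i+j+k=d$) by the scalar $e^{ai+bj+ck}$. Consequently the monomials split $R_d$ into common eigenspaces indexed by the value of $ai+bj+ck \bmod d$, and these eigenspaces are precisely the isotypic components for the characters of the cyclic group.

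The key step is then that a form $F\in R_d$ is invariant (fixed by $M_{a,b,c}$) if and only if it lies in the eigenspace for the trivial character, i.e. if and only if every monomial appearing in $F$ satisfies $ai+bj+ck\equiv 0 \pmod d$. Writing $F = \sum c_{ijk} x^iy^jz^k$ and comparing with $M_{a,b,c}\cdot F = \sum c_{ijk} e^{ai+bj+ck} x^iy^jz^k$, invariance forces $c_{ijk}(e^{ai+bj+ck}-1)=0$ for each monomial; since $e$ is a primitive $d$-th root of unity, the factor $e^{ai+bj+ck}-1$ vanishes exactly when $ai+bj+ck\equiv 0\pmod d$, so $c_{ijk}=0$ for all other monomials. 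Therefore the space of invariant degree-$d$ forms has a \emph{basis} consisting precisely of the monomials $x^iy^jz^k$ with $i+j+k=d$ and $ai+bj+ck\equiv 0\pmod d$. This shows $I$ is generated in degree $d$ by these monomials.

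To upgrade this from ``$I_d$ is spanned by monomials'' to ``$I$ is a monomial ideal'' I would note that $I$ is by definition generated by its degree-$d$ piece $I_d$ (all invariant forms of degree $d$), and since $I_d$ admits a monomial basis, the ideal $I$ is generated by monomials, hence monomial. The final sentence, that any GT-system is monomial, then follows immediately: by the formulation in the Problem and the surrounding discussion, any GT-system arises as the ideal of invariants of exactly such a diagonal representation $M_{a,b,c}$ of $\ZZ/d\ZZ$ (every representation of $\ZZ/d\ZZ$ on $GL(3,K)$ being diagonalizable), so the monomiality just established applies verbatim.

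I do not expect a serious obstacle here: the argument is essentially the standard fact that a torus (or diagonalizable group) action has the monomials as a weight basis, so the invariant subspace is monomial. The only point requiring a line of care is the implication that common eigenvectors are monomials, which relies on the eigenvalues $e^{ai+bj+ck}$ being attached to distinct monomials and on $e$ being a primitive root so that congruence mod $d$ exactly detects the trivial character; this is routine linear algebra over the eigenspace decomposition. The statement that $I$ is generated in degree $d$ is built into the definition of the GT-system, so no separate Hilbert-function or regularity argument is needed.
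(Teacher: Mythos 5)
Your proof is correct and follows essentially the same route as the paper: both arguments rest on the observation that the diagonal action makes every monomial an eigenvector with eigenvalue $e^{ai+bj+ck}$, so an invariant form can only involve monomials with $ai+bj+ck\equiv 0\pmod d$. The only cosmetic difference is that you compare coefficients of $F$ and $M_{a,b,c}\cdot F$ directly, whereas the paper averages over the group orbit and invokes the vanishing of the geometric sum $\sum_{r}e^{r(ai+bj+ck)}$ for non-invariant monomials; your version is, if anything, the cleaner write-up of the same idea.
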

\begin{proof} Consider $m_1,\ldots ,m_s$ all monomials invariants under
the action of $M_{a,b,c}$
and assume there is $F\in I\setminus \langle m_1,\ldots ,m_s\rangle $. Write
$F$ as a sum of monomials of degree $d$:
$F=m_{i_1}+\cdots +m_{i_t}$. We proceed by induction on $t$. Set
$m_{i_1}=x^{\alpha}y^{\beta}z^{\gamma}$.
Since $M_{a,b,c}^rF=F$ for $0\le r\le d-1$ we have that $e^{ra
\alpha}x^{\alpha}e^{rb \beta}y^{\beta}e^{rc \gamma}z^{\gamma}$
is a summand of $F$ for $0\le r\le d-1$. Therefore,
$x^{\alpha}y^{\beta}z^{\gamma}e^{a \alpha+ b \beta+ c
\gamma}(e^0+e^1+\cdots +e^{d-1})
$ is a summand of $F$ but this is zero.
\end{proof}

\begin{prop}\label{galois_wlp} Fix an integer $d\ge 3$ and a
representation $M_{a,b,c}$
of $\ZZ/d\ZZ$. Let $I\subset K[x,y,z]$ be the ideal generated by all
monomials of degree $d$ invariant under the action of $M_{a,b,c}$. If $\mu (I)\le
d+1$ then $I$ is a GT-system, i.e. $ I$ fails WLP
 from degree $d-1$ to degree $d$.
\end{prop}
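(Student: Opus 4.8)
The plan is to apply Theorem \ref{teathm} with $n=2$. Since $I$ is generated in degree $d$ by monomials (Theorem \ref{galoismonomial}) and distinct monomials of the same degree are linearly independent, the minimal generators of $I$ are exactly the degree-$d$ monomials invariant under $M_{a,b,c}$; in particular their number is $r:=\mu(I)\le d+1=\binom{2+d-1}{2-1}$ by hypothesis. (Note $x^d,y^d,z^d$ are invariant, since their weights $ad,bd,cd$ are $\equiv 0 \pmod d$, so $I$ is indeed artinian.) Theorem \ref{teathm} then reduces the failure of the WLP in degree $d-1$ to verifying condition $(2)$: that the generators of $I$ become linearly dependent when restricted to a general line $H\subset\PP^2$. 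So it suffices to produce, for a general linear form $L$, a nonzero invariant form of degree $d$ that is divisible by $L$; its coordinates in the monomial basis then give a nontrivial linear relation among the invariant monomials restricted to $H=V(L)$.

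I would construct such a form by an orbit product. Writing $\sigma=M_{a,b,c}$ and $L=\alpha x+\beta y+\gamma z$, set
\[
N_L=\prod_{r=0}^{d-1}\sigma^r(L)=\prod_{r=0}^{d-1}\left(\alpha e^{ra}x+\beta e^{rb}y+\gamma e^{rc}z\right).
\]
Each factor is a nonzero linear form as soon as $\alpha,\beta,\gamma$ are nonzero, so $N_L\neq 0$; it has degree $d$; and since $\sigma$ only cyclically permutes the factors (using $\sigma^d=\mathrm{id}$), one has $\sigma(N_L)=N_L$, so $N_L$ is invariant and hence $N_L\in I_d$. Finally the $r=0$ factor shows $L\mid N_L$. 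Geometrically $N_L$ is the equation of the $\ZZ/d\ZZ$-orbit of the line $V(L)$, a union of $d$ lines, which is exactly the configuration appearing in the Galois-covering description of $\varphi_I$.

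Consequently $N_L$ restricts to $0$ on $H=V(L)$. Writing $N_L=\sum_i c_i m_i$ with the $m_i$ the invariant monomials and the $c_i$ not all zero, the identity $\sum_i c_i\,(m_i|_H)=0$ is a nontrivial linear dependence among the generators of $I$ on $H$. This works for every $L$ with $\alpha\beta\gamma\neq 0$, a dense open set of lines, so condition $(2)$ of Theorem \ref{teathm} holds for general $H$; therefore $I$ fails the WLP from degree $d-1$ to degree $d$, i.e.\ $I$ is a GT-system.

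The only genuinely creative step is the orbit-product construction of $N_L$; once it is in hand, the rest is the bookkeeping of Theorem \ref{teathm}, together with the observation that diagonal rescaling of the variables commutes with $M_{a,b,c}$ and so lets one pass freely among general $L$. The main points to check are that $N_L$ is truly nonzero and that the resulting relation is nontrivial, but both are immediate since the factors are nonzero linear forms and the degree-$d$ monomials are linearly independent.
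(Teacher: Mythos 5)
Your proof is correct and takes essentially the same route as the paper: both rest on the orbit product $\prod_{r=0}^{d-1}M_{a,b,c}^{r}(L)$, a nonzero invariant form of degree $d$ divisible by $L$, fed into Theorem \ref{teathm}. The only cosmetic difference is that the paper first reduces to the single form $L=x+y+z$ via \cite[Proposition 2.2]{MMN}, whereas you verify the dependence condition directly for all $L$ with $\alpha\beta\gamma\neq 0$, which is a dense open set and hence suffices for a general hyperplane.
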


\begin{proof}  Since $\mu(I) \le d+1$ we can apply Theorem \ref{teathm}
and hence we only have to check that for a general linear form $L\in (K[x,y,z])_1$ the map $\times L
:(K[x,y,z])_{d-1}=(K[x,y,z]/I)_{d-1}\longrightarrow(K[x,y,z]/I)_d$ is not injective. Since $I$ is a monomial ideal it is enough to check the failure of the injectivity for $L=x+y+z$ (see \cite[Proposition 2.2]{MMN}).
This is equivalent to prove that  given  $L= x+y+z$ there exists a form
$C_{d-1}$ of degree $d-1$ such that $L\cdot C_{d-1}\in I$.
Consider $C_{d-1}=(e^ax+e^by+e^cz)(e^{2a}x+e^{2b}y+e^{2c}z)\cdots
(e^{(d-1)a}x+e^{(d-1)b}y+e^{(d-1)c}z).$
The polynomial $F=L\cdot C_{d-1}$ is invariant under the action of
$M_{a,b,c}$. Therefore, it belongs to $I$ and we have proved what we want.
\end{proof}

\begin{rem}\label{proportionality} \rm{We observe that, up to proportionality,
after replacing $a,b,c$ by $0, b-a, c-a$, we can assume $a=0$.  So from now on we will consider always actions defined by matrices of the form $M_{a,b}=\begin{pmatrix}
1&0&0\\
0&e^a&0\\
0&0&e^b
\end{pmatrix} $} with $e$ a primitive $d$-th root of 1 and $GDC(a,b,d)=1$.
\end{rem}

\begin{thm}\label{upper} Fix an integer $d\ge 3$ and let $I\subset
K[x,y,z]$ be the ideal generated by all monomials of degree $d$ invariant
under the action of $M_{a,b}:=\begin{pmatrix}
1 &       0 &      0\\
0 &     e^a &     0\\
0   &   0 &      e^b\end{pmatrix},$
with $1\le a\le b \le d-1$ and $GCD(a,b,d)=1$. Then, $I$ is a $GT$-system.
\end{thm}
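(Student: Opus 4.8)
The plan is to reduce the statement to the purely numerical bound $\mu(I)\le d+1$ and then invoke Proposition \ref{galois_wlp}. By Theorem \ref{galoismonomial} the ideal $I$ is generated by the degree-$d$ monomials $x^\alpha y^\beta z^\gamma$ (with $\alpha+\beta+\gamma=d$) invariant under $M_{a,b}$, that is, by those with $a\beta+b\gamma\equiv 0\pmod d$. Since two distinct monomials of the same degree never divide one another, these invariant monomials already form the minimal generating set, so $\mu(I)$ equals their number. Eliminating $\alpha=d-\beta-\gamma$, the generators correspond bijectively to the lattice points of
\[
\Lambda=\{(\beta,\gamma)\in\ZZ^2 : a\beta+b\gamma\equiv 0 \pmod d\}
\]
lying in the closed triangle $T=\{\beta,\gamma\ge 0,\ \beta+\gamma\le d\}$. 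Because $\gcd(a,b,d)=1$, the homomorphism $\ZZ^2\to\ZZ/d\ZZ$, $(\beta,\gamma)\mapsto a\beta+b\gamma$, is surjective, so $\Lambda$ is a sublattice of index $d$.

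Next I would count $N:=\#(\Lambda\cap T)$ by exploiting the involution $\sigma(\beta,\gamma)=(d-\beta,d-\gamma)$ of the closed square $Q=[0,d]^2$. One checks that $\sigma$ preserves $\Lambda$, since $a(d-\beta)+b(d-\gamma)\equiv-(a\beta+b\gamma)\pmod d$, and that it interchanges $T$ with the complementary triangle $T'=\{\beta+\gamma\ge d\}\cap Q$, fixing the diagonal $\beta+\gamma=d$ setwise. Inclusion–exclusion then gives $2N-D=\#(\Lambda\cap Q)$, where $D=\#(\Lambda\cap\{\beta+\gamma=d\})$. The three ingredients are elementary: the half-open square $[0,d)^2$ is a full system of residues for $\ZZ^2/d\ZZ^2$, so it meets $\Lambda$ in exactly $d$ points; counting along the two remaining edges $\gamma=d$ and $\beta=d$ (and correcting the common corner $(d,d)$) yields $\#(\Lambda\cap Q)=d+\gcd(a,d)+\gcd(b,d)+1$; and on the diagonal one has $a\beta+b\gamma\equiv(a-b)\beta\pmod d$, whence $D=\gcd(b-a,d)+1$. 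This produces the closed formula
\[
\mu(I)=N=\frac{d+\gcd(a,d)+\gcd(b,d)+\gcd(b-a,d)}{2}+1.
\]

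Finally I would prove the arithmetic inequality $\gcd(a,d)+\gcd(b,d)+\gcd(b-a,d)\le d$, which is exactly what converts the formula into $\mu(I)\le d+1$ and lets Proposition \ref{galois_wlp} conclude. Writing $g_1=\gcd(a,d)$, $g_2=\gcd(b,d)$, $g_3=\gcd(b-a,d)$, the engine is that $g_1,g_2,g_3$ are \emph{pairwise coprime} divisors of $d$: for instance $\gcd(g_1,g_2)$ divides $a$, $b$ and $d$, hence divides $\gcd(a,b,d)=1$, and likewise for the other two pairs. Consequently $g_1g_2g_3\mid d$, so $g_1g_2g_3\le d$, and a short case analysis on how many of the $g_i$ equal $1$ (using that a proper divisor of $d$ is at most $d/2$, and that two coprime integers $\ge 2$ satisfy $g_i+g_j\le g_ig_j$) upgrades the product bound to the desired sum bound $g_1+g_2+g_3\le d$. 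The main obstacle is precisely this passage from product to sum: the product estimate is immediate from pairwise coprimality, but the sum estimate genuinely requires both coprimality and the divisor bound, and it is sharp (e.g.\ $d=6$, $(a,b)=(2,3)$ gives $\mu(I)=d+1$). I would also flag where the distinctness of the weights $0,a,b$ modulo $d$ is used, namely to guarantee $g_3\ne d$: in the degenerate situation $a=b$ one has $I=(x^d)+(y,z)^d$ with $\mu(I)=d+2$, which falls outside the range of Proposition \ref{galois_wlp} and must be handled (or excluded) separately.
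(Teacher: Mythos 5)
Your reduction of the theorem to the numerical bound $\mu(I)\le d+1$ followed by an appeal to Proposition \ref{galois_wlp} is exactly the paper's strategy, but the way you establish the bound is genuinely different and more informative. The paper disposes of the count in one line, citing McCarthy for the number of incongruent solutions of $am+bn\equiv 0\pmod d$. You instead derive the exact value
\[
\mu(I)=\frac{d+\gcd(a,d)+\gcd(b,d)+\gcd(b-a,d)}{2}+1
\]
via the involution $(\beta,\gamma)\mapsto(d-\beta,d-\gamma)$ of the square $[0,d]^2$, and then prove the arithmetic inequality $\gcd(a,d)+\gcd(b,d)+\gcd(b-a,d)\le d$ from the pairwise coprimality of the three gcd's, which is precisely where the hypothesis $\gcd(a,b,d)=1$ enters. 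I checked the three ingredients ($d$ lattice points in the half-open square, $\gcd(a,d)+\gcd(b,d)+1$ on the two closed edges, $\gcd(b-a,d)+1$ on the diagonal) and the product-to-sum case analysis; they are correct. Your closed formula specializes to Theorem \ref{numberinv} when $d$ is prime, sharpens the lower bound of Proposition \ref{lower}, and identifies exactly when $\mu(I)=d+1$ is attained (e.g.\ $d=6$, $(a,b)=(2,3)$). What your route buys is this extra precision for arbitrary $d$; what it costs is length relative to the paper's citation.

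Your remark on the degenerate case $a=b$ is not a defect of your proof but a genuine observation about the statement: there $\gcd(b-a,d)=d$, your formula gives $\mu(I)=d+2$ (indeed $I=(x^d)+(y,z)^d$ when $\gcd(a,d)=1$), so Proposition \ref{galois_wlp} does not apply; and by the Lemma opening Section \ref{d prime} such an ideal has the WLP, hence is not a Togliatti system. The paper's proof asserts $d+1$ solutions in that case and glosses over this, so the hypothesis should really read $1\le a<b\le d-1$. Your argument is complete exactly on the range where the conclusion holds.
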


\begin{proof} We know that $I$ is a monomial ideal (see Theorem
\ref{galoismonomial})  and any monomial of degree $d$ can be written in the form
$x^{d-m-n}y^mz^n$ with $m,n\ge 0$ and $m+n\le d$. Clearly
$x^{d-m-n}y^mz^n$ is invariant under the action of $M_{a,b}$ if and only if
$am+bn \equiv 0 \pmod{d}$.  It is easy to prove (see for instance \cite{Mc}, Ch. 3) that this linear congruence has $d$ (resp. $d+1$)
incongruent solutions if $a\neq b$  (resp. if $a=b$), which implies that $\mu(I) \le d+1.$  The thesis follows from Proposition  \ref{galois_wlp}.
\end{proof}


\section{Minimality of GT-systems}\label{minimality}

We fix an integer $d\ge 3$ and a representation $M_{a,b}:=\begin{pmatrix}
1 &       0 &      0\\
0 &     e^a &     0\\
0   &   0 &      e^b\end{pmatrix}$ of the cyclic group $\ZZ/d\ZZ$. We are going to study the minimality of  all monomial GT-systems $I$ generated by all monomials of degree $d$ invariant under the action of $M_{a,b}$.
By the proof of Proposition \ref{galois_wlp}, proving the minimality of the GT-system $I$ is equivalent to proving that the monomials of degree $d$ invariant under the action of $M_{a,b}$ all appear with non-zero coefficient in the development of the product of linear forms
$(x+y+z)(x+e^ay+e^b z)(x+e^{2a} y+e^{2b} z)\cdots (x+e^{(d-1)a} y+e^{(d-1)b}z)$. This is  not at  all a trivial problem, that we will address by expressing this last product as the determinant of a suitable circulant matrix, then we will exploit the basic properties of circulants (\cite{KS} and \cite{LWW}). So, let us start this section by recalling the definition and the properties on circulant matrices needed in the sequel. The reader should read \cite{W} and \cite{Malenf} for more details.

\begin{defn} \rm A $d\times d$ circulant matrix is a matrix of the form
$$ Circ(v_0,\ldots,v_{d-1}):=\begin{pmatrix} v_0 & v_1 & \cdots & v_{d-2} & v_{d-1} \\  v_{d-1} & v_0 & \cdots & v_{d-1} & v_{d-2} \\ \vdots & \vdots & \cdots & \vdots & \vdots \\
v_1 & v_ 2 & \cdots & v_{d-1} & v_0
\end{pmatrix}$$ where successive rows are circular permutations of the first row $v_0,\ldots,v_{d-1}$.
\end{defn}

A circulant matrix $ Circ(v_0,\ldots,v_{d-1})$ is a particular form of a Toeplitz
matrix i.e. a matrix whose elements are constant along the diagonals. A circulant matrix $ Circ(v_0,\ldots,v_{d-1})$ has $d$
eigenvalues, namely, $v_0 + e^pv_1 + e^{2p}v_2 + \cdots + e^{p(d-1)}v_{d-1}$, $0\le p \le d-1$, where $e$ is a primitive $d$-th root of unity. Therefore,
\begin{equation} \label{det_cir} \begin{array}{rcl} \det(Circ(v_0,\ldots,v_{d-1})) & = &  \left | \begin{matrix} v_0 & v_1 & \cdots & v_{d-2} & v_{d-1} \\  v_{d-1} & v_0 & \cdots & v_{d-1} & v_{d-2} \\ \vdots & \vdots & \cdots & \vdots & \vdots \\
v_1 & v_ 2 & \cdots & v_{d-1}& v_0  \end{matrix} \right | \\ \\
& = & \prod _{j=0}^{d-1} (v_0 + e^jv_1 + e^{2j}v_2 + \cdots + e^{j(d-1)}v_{d-1}). \end{array} \end{equation}

The product on the right hand side in the equation (\ref{det_cir}), when expanded out, contains ${2d-1}\choose d$ terms and it is still an open problem to find an efficient  formula for the coefficients and decide whether they are zero or not. Let us relate this problem to our problem of determining the minimality of GT-systems.
For any integer $d\geq 3$ and $1\leq a<b\leq d$, we consider the $d\times d$ circulant matrix \begin{equation*}A_d^{a,b}=Circ(x,0,\ldots , 0, y,0, \ldots 0, z,0,\ldots,0)\end{equation*}
where  $y$ is in the position of index $a$ and $z$ in the position of index $b$. We have $$\det (A_d^{a,b})= \prod _{j=0}^{d-1} (x + e^{aj}y + e^{bj}z).$$
 The determinant of $A_d^{a,b}$ is therefore exactly  the product we are interested in and
we want to prove that all monomials of degree $d$ invariant under the action of $M_{a,b}$ appear with non-zero coefficient in $\det(A_d^{a,b})$.  Let us now summarize what is known about the coefficients in the left hand side of the equation (\ref{det_cir}).  To this end we express the determinant of a $d\times d$ circulant matrix as follows:

$$ \det(Circ(v_0,\ldots,v_{d-1}))  = \sum _{0\le a_0\le \cdots \le a_{d-1}\le d-1}c_{a_0\cdots a_{d-1}}v_{a_0}\cdots v_{a_{d-1}}.$$
\begin{prop}\label{coefI} With the above notation, it holds:
\begin{itemize}
\item[(1)] If $a_0+a_1+\cdots +a_{d-1}\not\equiv 0 \pmod{d}$, then $c_{a_0\cdots a_{d-1}}=0 $.
\item[(2)] If $d$ is prime and $a_0+a_1+\cdots +a_{d-1}\equiv 0 \pmod{d}$, then $c_{a_0\cdots a_{d-1}}\ne 0 $.
\end{itemize}
\end{prop}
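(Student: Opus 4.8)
The plan is to expand the product in equation~(\ref{det_cir}) monomial by monomial, read off $c_{a_0\cdots a_{d-1}}$ as an explicit exponential sum, and then dispatch the two parts by exploiting the symmetries of that sum. Write $L_j=\sum_{k=0}^{d-1}e^{jk}v_k$, so that $\det(Circ(v_0,\dots,v_{d-1}))=\prod_{j=0}^{d-1}L_j$. Choosing one term from each factor is the same as choosing a function $\sigma\colon\{0,\dots,d-1\}\to\{0,\dots,d-1\}$, which contributes $e^{N(\sigma)}\prod_j v_{\sigma(j)}$ with $N(\sigma):=\sum_{j=0}^{d-1} j\,\sigma(j)$. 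Collecting monomials, and writing $A=\{a_0,\dots,a_{d-1}\}$ for the prescribed multiset, $|A|=\sum_i a_i$, and $c_A:=c_{a_0\cdots a_{d-1}}$, one gets
\[
c_A=\sum_{\sigma}e^{N(\sigma)},
\]
the sum ranging over all $\sigma$ whose multiset of values equals $A$. For part (1) I would use the substitution $v_k\mapsto e^k v_k$: it sends $L_j$ to $L_{j+1}$ (indices mod $d$) and hence leaves $\prod_j L_j$ invariant, while it multiplies the monomial $\prod_i v_{a_i}$ by $e^{|A|}$. Comparing coefficients forces $c_A(1-e^{|A|})=0$, so $c_A=0$ whenever $|A|\not\equiv 0\pmod d$.

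For part (2) set $d=p$ and let $n_k$ be the number of $\sigma$ with value-multiset $A$ and $N(\sigma)\equiv k\pmod p$, so that $c_A=\sum_{k=0}^{p-1}n_k e^k$ and $\sum_k n_k=M$, where $M=p!/\prod_v m_v!$ and $m_v$ is the multiplicity of $v$ in $A$. The first step is a relabelling symmetry: for $t\in(\ZZ/p)^\ast$ the map $(\mu_t\sigma)(j)=\sigma(t^{-1}j)$ permutes the $\sigma$'s with multiset $A$ and satisfies $N(\mu_t\sigma)=t\,N(\sigma)$, hence $n_k=n_{tk}$. Since $(\ZZ/p)^\ast$ acts transitively on nonzero residues this yields $n_1=\dots=n_{p-1}=:n_\ast$, and using $1+e+\dots+e^{p-1}=0$,
\[
c_A=n_0+n_\ast(e+\dots+e^{p-1})=n_0-n_\ast\in\ZZ .
\]
It therefore suffices to prove $n_0\neq n_\ast$.

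If all the $a_i$ are equal there is a single $\sigma$ and $c_A=e^{N(\sigma)}\neq 0$, so I may assume $A$ takes at least two values, i.e. every $m_v\le p-1$. The second step is the translation $(\tau\sigma)(j)=\sigma(j+1)$: since $|A|\equiv 0\pmod p$ one computes $N(\tau\sigma)=N(\sigma)-|A|\equiv N(\sigma)$, so $\tau$ preserves every level set counted by $n_k$, and $\tau$ has no fixed point because a $\tau$-fixed $\sigma$ is constant, which is now excluded. Thus $\langle\tau\rangle\cong\ZZ/p$ acts freely on each level set and $p\mid n_k$ for all $k$. Finally I would pin down the valuation of $M$ by Legendre's formula: $v_p(M)=\tfrac{1}{p-1}\bigl(\sum_v m_v-S_p(p)\bigr)=\tfrac{1}{p-1}(p-1)=1$, since each $m_v<p$ gives $S_p(m_v)=m_v$ and $S_p(p)=1$. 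Now if $c_A=0$ then $n_0=n_\ast$ and $M=n_0+(p-1)n_\ast=p\,n_0$; combined with $p\mid n_0$ this forces $p^2\mid M$, contradicting $v_p(M)=1$. Hence $c_A\neq 0$.

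The routine ingredients are the monomial expansion and the two group actions. The crux is part (2): the relabelling/Galois symmetry only reduces nonvanishing to the single inequality $n_0\neq n_\ast$, and equidistribution of $N\bmod p$ is not ruled out by symmetry alone. The decisive input is the exact valuation $v_p(M)=1$, which turns $c_A=0$ into the impossible divisibility $p^2\mid M$; recognizing this as the correct finishing invariant is, I expect, the main obstacle.
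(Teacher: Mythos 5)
Your proof is correct, and it is genuinely different from what the paper offers: the paper does not prove this proposition at all, but simply cites \cite{Malenf} and \cite{W} for both parts, so a self-contained argument is a real addition rather than a variant. Your part (1) is the standard symmetry argument (the substitution $v_k\mapsto e^kv_k$ permutes the eigenvalue factors $L_j$ cyclically, hence fixes the determinant while scaling the monomial by $e^{|A|}$), and it is airtight. Your part (2) is the interesting one, and every step checks out: the identity $c_A=\sum_\sigma e^{N(\sigma)}$ over choice functions with value-multiset $A$; the $(\ZZ/p)^\ast$-relabelling giving $n_1=\cdots=n_{p-1}$ and hence $c_A=n_0-n_\ast\in\ZZ$; the free $\ZZ/p$-translation action on each level set (free precisely because a $\tau^k$-fixed $\sigma$ with $\gcd(k,p)=1$ is constant, the case you correctly split off), giving $p\mid n_k$; and the valuation $v_p(M)=1$ for the multinomial coefficient $M=p!/\prod_v m_v!$ with all parts $m_v<p$, which makes $c_A=0$ (i.e.\ $M=pn_0$ with $p\mid n_0$) impossible. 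The only cosmetic slip is the displayed Legendre formula, where $\sum_v m_v$ should read $\sum_v S_p(m_v)$ — harmless, since you immediately note $S_p(m_v)=m_v$ for $m_v<p$, and in fact $v_p(M)=1$ is already clear from $v_p(p!)=1$ and $v_p(m_v!)=0$. What your route buys is transparency: the reader sees exactly where primality enters (transitivity of $(\ZZ/p)^\ast$ on nonzero residues, freeness of the translation action, and the sharp valuation of $M$), which also makes visible why the statement fails for composite $d$, as the paper's subsequent remark about $d=6$ and $d=10$ illustrates.
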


\begin{proof} (1) See \cite{Malenf}, Theorem 1 or \cite{W}, Proposition 10.4.3.

(2) See \cite{Malenf}, Corollary 4 or \cite{W}, Chapter 11.

\end{proof}

\begin{rem} \rm The converse of Proposition \ref{coefI}(2) is not true if $d$ is not prime. Indeed, for $d=6$ we have $c_{0,0,1,3,3,5}=0$, and for $d=10$ we have $c_{0,0,0,0,1,21,1,3,6,8}=0$ (see, for instance, \cite{W}, p. 123).
\end{rem}

In \cite{LWW}, Loehr, Warrington and Wilf  addressed  the problem of determining whether $c_{a_0\cdots a_{d-1}}\ne 0 $ when condition (1) in Proposition  \ref{coefI}  is satisfied and there are only three distinct non-zero elements in $(v_0,v_1,\ldots ,v_{d-1})$. They  obtained the following result.

\begin{prop}\label{coefII} Fix  integers $d\ge 3$ and $d-1\ge a\ge 2$ and consider the $d\times d$ circulant matrix $A_d^{a}=Circ(x, y,0, \ldots 0, z,0,\ldots ,0)$ where $z$ is located  at the position of index $a$. Then, we have $$\det (A_d^a)=\prod _{j=0}^{d-1} (x + e^{j}y + e^{aj}z)=\sum _{0\le m,n\le d \atop m+n\le d}c_{m,n}x^{d-m-n}y^m z^n$$ and $c_{m,n}\ne 0$ if and only if $m+an\equiv 0 \pmod{d}$.
\end{prop}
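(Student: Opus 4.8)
The plan is to identify the coefficient $c_{m,n}$ in the expansion of $\det(A_d^a)$ with the coefficient of a specific monomial in a sum over the symmetric group, and then to show this coefficient is nonzero by exploiting the special structure of the matrix $A_d^a$ (only three diagonals of nonzero entries, at offsets $0$, $1$, and $a$). The forward direction of the equivalence is essentially Proposition \ref{coefI}(1): since $A_d^a$ has $x$ on the main diagonal, $y$ on the diagonal of offset $1$, and $z$ on the diagonal of offset $a$, a monomial $x^{d-m-n}y^mz^n$ arising from the determinant corresponds to a permutation built from $d-m-n$ fixed points, $m$ cyclic shifts by $1$, and $n$ cyclic shifts by $a$. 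The total displacement of such a permutation must vanish modulo $d$ (since the permutation maps $\{0,\dots,d-1\}$ to itself), which forces $m\cdot 1 + n\cdot a \equiv 0 \pmod d$. This matches the invariance condition $am+bn\equiv 0$ with $b=a$ and the main-diagonal weight normalized to $1$, so if $m+an\not\equiv 0$ then no permutation contributes and $c_{m,n}=0$.

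For the converse --- the genuinely new content --- I would argue combinatorially that whenever $m+an\equiv 0 \pmod d$ there is at least one permutation of the required type, and moreover that the contributions do not cancel. The first step is to observe that $c_{m,n}$ counts, with sign, the permutations $\sigma\in S_d$ that can be written using exactly $m$ steps of length $1$ and $n$ steps of length $a$ (with the remaining positions fixed), where each such $\sigma$ contributes $\operatorname{sgn}(\sigma)$. The key is to show that all contributing permutations share the \emph{same} sign, so that the sum is a nonempty sum of equal signs and hence cannot vanish. I would decompose such $\sigma$ into disjoint cycles on $\ZZ/d\ZZ$; each cycle is built entirely from unit steps and steps of length $a$, and its signature is determined by its length (even-length cycles are odd permutations). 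The claim to establish is that the parity is in fact an invariant of the pair $(m,n)$ and does not depend on which particular arrangement of steps one chooses --- intuitively because exchanging two steps or reorganizing cycles preserves the total parity once $m$ and $n$ are fixed. Combined with the existence of at least one such decomposition (guaranteed once $m+an\equiv 0$, e.g. by taking a single $d$-cycle or an explicit greedy construction on the cyclic group), this yields $c_{m,n}\neq 0$.

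The main obstacle is precisely the sign-cancellation issue in the converse: it is not enough to exhibit one contributing permutation, since the determinant is an alternating sum and a priori different permutations could carry opposite signs and cancel. I expect the heart of the argument --- and the reason this is attributed to Loehr, Warrington, and Wilf rather than being elementary --- to be a careful parity analysis showing that $\operatorname{sgn}(\sigma)$ depends only on $(d,m,n)$ across all admissible $\sigma$. A clean way to organize this is to track how each transposition-like modification (swapping the roles of a unit step and an $a$-step within the cyclic structure) changes both the cycle type and the sign in a compensating manner. Since the paper cites \cite{LWW} for Proposition \ref{coefII}, I would present the reduction to the invariance condition and the structural setup in detail, then invoke their result for the nonvanishing, rather than reproving the full parity computation; the essential original observation to record is the dictionary between the monomial exponents $(m,n)$, the step-counts in the circulant, and the invariance congruence $m+an\equiv 0 \pmod d$ that ties the determinant expansion directly to the Togliatti minimality question.
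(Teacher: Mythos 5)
Your proposal is correct and ends up in the same place as the paper: the paper's entire proof of this proposition is the citation to Loehr--Warrington--Wilf (their Theorem 2), and you likewise defer the nonvanishing statement --- the hard direction, where sign cancellation in the alternating sum must be ruled out --- to that reference. Your added setup (the dictionary between a monomial $x^{d-m-n}y^mz^n$ and permutations with $d-m-n$ fixed points, $m$ shifts by $1$ and $n$ shifts by $a$, plus the displacement-sum argument forcing $c_{m,n}=0$ when $m+an\not\equiv 0 \pmod{d}$) is sound, while your sketched ``all contributing permutations carry the same sign'' mechanism is left unproven, which is acceptable only because it is precisely the content of the cited result.
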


\begin{proof} See \cite{LWW}, Theorem 2.
\end{proof}

We are now ready to state our main result concerning the minimality of GT-systems. We have

\begin{thm}\label{minimal}
Let $d\ge 3$ be an integer and let $I\subset
K[x,y,z]$ be the GT-system generated by all monomials of degree $d$, which are  invariant
under the action of $M_{a}=\begin{pmatrix}
1 &       0 &      0\\
0 &     e &     0\\
0   &   0 &      e^a\end{pmatrix}.$ Then, $I$ is  a minimal Togliatti system.
\end{thm}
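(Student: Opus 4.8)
The plan is to reduce the minimality of $I$ to a single non-vanishing statement for the coefficients of an explicit product of linear forms, and then to read that statement off from the circulant-determinant theorem of Loehr, Warrington and Wilf recorded in Proposition \ref{coefII}.

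I would start from the reduction already isolated in the discussion preceding Proposition \ref{coefI}. In the notation of Remark \ref{proportionality} we have $M_a=M_{1,a}$, so that reduction says: $I$ is a minimal Togliatti system if and only if every monomial of degree $d$ invariant under $M_a$ occurs with non-zero coefficient in
\[
P \;=\; (x+y+z)(x+e\,y+e^{a}z)\cdots\bigl(x+e^{d-1}y+e^{(d-1)a}z\bigr)\;=\;\prod_{j=0}^{d-1}\bigl(x+e^{j}y+e^{aj}z\bigr).
\]
The underlying mechanism, from the proof of Proposition \ref{galois_wlp}, is that $P=L\cdot C_{d-1}$ with $L=x+y+z$ is the form exhibiting the failure of WLP, and an invariant generator $m_0$ can be removed while the remaining monomials still form a Togliatti system exactly when $m_0$ does not occur in $P$.

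The body of the argument is then just a matching of two index sets. On one side, by the congruence computed in the proof of Theorem \ref{upper} applied to the pair $(1,a)$, the monomial $x^{d-m-n}y^{m}z^{n}$ is invariant under $M_a$ if and only if $m+an\equiv 0\pmod d$. On the other side, $P$ is precisely the circulant determinant $\det(A_d^{a})$ associated with $A_d^{a}=Circ(x,y,0,\dots,0,z,0,\dots,0)$, with $z$ in position $a$. Hence, for $2\le a\le d-1$, Proposition \ref{coefII} applies directly and gives that, writing $\det(A_d^{a})=\sum_{m+n\le d}c_{m,n}\,x^{d-m-n}y^{m}z^{n}$, one has $c_{m,n}\neq 0$ if and only if $m+an\equiv 0\pmod d$. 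Thus the monomials occurring in $P$ are exactly the monomials invariant under $M_a$, which is what minimality demands.

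The one point that needs separate handling is that Proposition \ref{coefII} assumes $a\ge 2$, so I would dispose of the boundary value $a=1$ by a direct computation: there the product collapses to $P=\prod_{j=0}^{d-1}\bigl(x+e^{j}(y+z)\bigr)=x^{d}-(-1)^{d}(y+z)^{d}$, whose non-zero monomials are $x^{d}$ together with all $y^{m}z^{d-m}$, and these are exactly the monomials invariant under $M_1=\mathrm{diag}(1,e,e)$. I do not anticipate a genuine obstacle here: the substantive combinatorics — that the circulant coefficients vanish precisely off the residue class $m+an\equiv 0\pmod d$ — is entirely supplied by the cited result, so what remains is only to identify $P$ with $\det(A_d^{a})$ and to check that the two descriptions of the monomial set coincide.
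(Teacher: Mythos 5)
Your proposal is correct and takes essentially the same route as the paper: the paper reduces minimality to showing that every invariant monomial appears with non-zero coefficient in $\prod_{j=0}^{d-1}(x+e^{j}y+e^{aj}z)$, identifies this product with the circulant determinant $\det(A_d^{a})$, and concludes immediately from Proposition \ref{coefII}, exactly as you do. The only divergence is your separate treatment of $a=1$, which the paper implicitly excludes; note that for $a=1$ the invariant monomials are $x^d$ and all $y^mz^{d-m}$, so $\mu(I)=d+2>d+1$ and $I$ is not a Togliatti system in the sense of Definition \ref{togliattisystem}, making that case vacuous rather than an instance of the theorem.
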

\begin{proof}
It immediately follows from Proposition \ref{coefII}. In particular, all generalized classical Togliatti systems are minimal.
\end{proof}

In Section \ref{d prime}, we will prove that, if $d=p^r$ is a power of a prime $p\ge 3$, then the action on $K[x,y,z]_d$ of a representation $M_{a,b,c}$ of $\ZZ/d\ZZ$ is equivalent to the action on $K[x,y,z]_d$ of a representation $M_{\alpha }$ of $\ZZ/d\ZZ$ for a suitable $2\le \alpha \le p^r-1$ and, hence, for $d=p^r$, a power of a prime $p\ge 3$, {\em all} GT-systems are minimal. As we will see in Section \ref{general}, for a general integer $d$ with a prime factorization $d=p_1^{\alpha _1}\cdots p_r^{\alpha _r}$ with $r\ge 2$,  it is no longer true that
 all actions of $\ZZ/d\ZZ$ on $K[x,y,z]_d$ can be  represented by a matrix of the form $M_{\alpha }$ for a suitable $2\le \alpha \le d-1$ (see Remark \ref{different_action}). For a general integer $d$, we will only classify the actions of $\ZZ/d\ZZ$ on $K[x,y,z]_d$   represented by a matrix of the form $M_{\alpha }$ for a suitable $2\le \alpha \le d-1$. By Theorem \ref{minimal}, the GT-systems associated to these actions are always minimal.

Based on the above theorem and our computation, we are led to pose the following conjecture:

\begin{conj}
Fix an integer $d\ge 3$ and let $I\subset
K[x,y,z]$ be the $GT$-system generated by all monomials of degree $d$ invariant
under the action of $M_{a,b}=\begin{pmatrix}
1&0&0\\
0&e^a&0\\
0&0&e^b
\end{pmatrix} $
with $1\le a\le b \le d-1$ and $GCD(a,b,d)=1$. Then, $I$ is a minimal Togliatti system.
\end{conj}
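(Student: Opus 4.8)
The final statement is Conjecture, so I plan to outline a strategy that reduces the general two-parameter case $M_{a,b}$ to situations that are already under control, and then indicate where the genuine difficulty lies.

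The plan is to work with the circulant matrix $A_d^{a,b}=Circ(x,0,\ldots,y,\ldots,z,\ldots,0)$ whose determinant is $\prod_{j=0}^{d-1}(x+e^{aj}y+e^{bj}z)$, exactly as set up before Theorem \ref{minimal}. By the remark following Proposition \ref{galois_wlp} and the reduction in Remark \ref{proportionality}, minimality of the GT-system is equivalent to showing that every monomial $x^{d-m-n}y^mz^n$ satisfying the invariance condition $am+bn\equiv 0\pmod d$ appears with nonzero coefficient in this determinant. Proposition \ref{coefI}(1) already guarantees that only such invariant monomials can have nonzero coefficient, so the content is entirely the nonvanishing statement for the invariant monomials. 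First I would try to reduce the genuinely two-parameter situation to the one-parameter situation covered by Proposition \ref{coefII}: if $\gcd(a,d)=1$, then multiplying the exponent congruence $am+bn\equiv 0$ by $a^{-1}\bmod d$ converts it to $m+(a^{-1}b)n\equiv 0$, which is the invariance condition for $M_{1,a^{-1}b}$, so Theorem \ref{minimal} applies directly. The same works if $\gcd(b,d)=1$ by symmetry in the roles of $y$ and $z$. Hence the only cases not already settled are those where $\gcd(a,d)>1$ \emph{and} $\gcd(b,d)>1$ simultaneously (while still $\gcd(a,b,d)=1$), for instance $d=p_1 p_2$ with $a\equiv 0\pmod{p_1}$, $b\equiv 0\pmod{p_2}$.

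For these remaining cases I would attempt a combinatorial/permanent-type analysis of the coefficient $c_{m,n}$ directly. Expanding $\det(A_d^{a,b})$ via the Leibniz formula, each nonzero term corresponds to a permutation $\sigma\in S_d$ (indexing $0,\ldots,d-1$) in which each chosen entry is $x$, $y$, or $z$ according to whether $\sigma(i)-i\equiv 0$, $a$, or $b\pmod d$; selecting $y$ exactly $m$ times and $z$ exactly $n$ times. Thus $c_{m,n}=\sum_\sigma \operatorname{sgn}(\sigma)$ over the admissible permutations, and the task is to prove this signed sum is nonzero whenever $am+bn\equiv 0\pmod d$. The natural idea is to show the permutations split into orbits under the cyclic shift $i\mapsto i+1\pmod d$ acting on the index set, that each orbit has a single sign contribution up to a predictable factor, and to argue the orbit count cannot conspire to cancel; alternatively one can try a generating-function or transfer-matrix computation exploiting the sparsity of the matrix (three nonzero diagonals at offsets $0,a,b$). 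A cleaner route may be a multiplicativity/Chinese-Remainder reduction: writing $d=\prod p_i^{\alpha_i}$ and trying to factor the relevant coefficient count across the prime-power components, which would let one invoke the prime-power case (where Proposition \ref{coefI}(2) and the arguments promised for Section \ref{d prime} give nonvanishing).

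The hard part will be precisely the doubly-degenerate cases $\gcd(a,d)>1$ and $\gcd(b,d)>1$, because there the clean substitution to the single-offset circulant of Loehr--Warrington--Wilf is unavailable, and the known counterexamples to the converse of Proposition \ref{coefI}(2) (the vanishing coefficients listed for $d=6$ and $d=10$ in the remark after Proposition \ref{coefI}) show that signed cancellation genuinely does occur for general sparse circulants. The crux is therefore to prove that, \emph{although} cancellation happens for arbitrary three-term circulants, it never happens at an \emph{invariant} monomial of our specific matrices $A_d^{a,b}$; I expect this to require either a representation-theoretic input (using that the invariant monomials form exactly the degree-$d$ piece of the invariant ring, which is a one-dimensional character space in each graded-and-isotypic slot) or a delicate sign-reversing-involution argument showing the surviving terms are cofinal with a single sign. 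Honesty compels noting that this is stated as a conjecture rather than a theorem precisely because no such uniform nonvanishing argument is yet known in these degenerate cases, so the realistic deliverable is the reduction above together with verification in the prime-power regime and small-$d$ computational evidence.
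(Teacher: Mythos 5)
The statement you were asked to prove is labelled a \emph{conjecture} in the paper: the authors give no proof of it, only the partial result that every action equivalent to one of the single-parameter form $M_\alpha=\mathrm{diag}(1,e,e^{\alpha})$ yields a minimal system (Theorem \ref{minimal}, resting on Proposition \ref{coefII} of Loehr--Warrington--Wilf). Your proposal is correctly calibrated to this situation: the circulant set-up with $\det(A_d^{a,b})=\prod_{j=0}^{d-1}(x+e^{aj}y+e^{bj}z)$, the equivalence between minimality and non-vanishing of the coefficients of the invariant monomials, the role of Proposition \ref{coefI}(1), and the reduction of the cases $\gcd(a,d)=1$ or $\gcd(b,d)=1$ to Theorem \ref{minimal} by rescaling the congruence $am+bn\equiv 0 \pmod d$ all coincide with what the paper actually establishes, and your identification of the doubly-degenerate cases as the genuinely open ones is exactly where the paper stops as well (compare Remark \ref{different_action}, whose example $d=42$ with exponents $(0,3,7)$ is of this kind). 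So there is no paper proof to measure you against; your honest assessment is accurate.

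Two corrections to your description of the open territory. First, your reduction is not sharp: the equivalence of Definition \ref{def:eq} also allows permuting $x$ with $y$ or $z$. Swapping $x$ and $y$ turns the exponent vector $(0,a,b)$ into $(0,-a,b-a)$ after the renormalization of Remark \ref{proportionality}, so whenever $\gcd(b-a,d)=1$ the action is again of the form $M_\alpha$ (with $\alpha\equiv -a(b-a)^{-1}\pmod d$) and Theorem \ref{minimal} applies. For instance $d=6$, $(a,b)=(2,3)$ has $\gcd(a,d)>1$ and $\gcd(b,d)>1$, yet $b-a=1$ and the action is equivalent to that of $M_4$, so this is not a hard case; your example template $d=p_1p_2$, $p_1\mid a$, $p_2\mid b$ is in fact \emph{never} a hard case, since then $b-a$ is forced to be a unit mod $d$. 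The cases truly untouched by Theorem \ref{minimal} are those with $\gcd(a,d)>1$, $\gcd(b,d)>1$ and $\gcd(b-a,d)>1$ simultaneously, which forces $d$ to have at least three distinct prime factors (the paper's $d=42$, $(a,b)=(3,7)$ is such an example). Second, the degenerate case $a=b$ should be excluded: there your substitution gives $\alpha\equiv 1$, outside the range of Proposition \ref{coefII}, and in fact the ideal of invariants is $(x^d)+(y,z)^d$, which has $d+2$ generators and has the WLP by the Lemma at the beginning of Section \ref{d prime}, so it is not a Togliatti system at all. With these caveats your write-up matches the state of the art recorded in the paper; the remaining cases are open there too, so your final admission is not a defect of your argument but the actual status of the problem.
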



\section{Classification of GT-systems: The case $d$ prime or power of a prime.}\label{d prime}

The goal of this section is to classify  all GT-systems $I\subset
K[x,y,z]$ generated by forms of degree $d$, where $d$ is a prime integer
or a power of a prime; i.e. classify all monomial Togliatti systems
 $I\subset K[x,y,z]$ generated by forms of degree $d$ whose associated
regular map is a Galois covering with cyclic Galois group $\ZZ/d\ZZ$.

We have to study the action  on $K[x,y,z]_d$ by the matrices of the form
$M_{a,b,c}$
with $0\leq a\leq b\leq c\leq d-1$. Our aim is first to characterize the
monomials, and then the  homogeneous artinian ideals, generated by forms
of degree $d$,  which are invariant under these actions. Therefore we
give the following definition.

\begin{defn}\label{def:eq}  We will say that   the actions on
$K[x,y,z]_d$ of two representations $M_{a,b,c}$ and $M_{a',b',c'}$ of
$\ZZ/d\ZZ$  are equivalent if the homogeneous artinian ideals, generated
by forms of degree $d$  which are invariant under their actions,
coincide up to  a permutation of the variables.
\end{defn}

First of all we observe that since we are interested in artinian ideals
$I\subset K[x,y,z]_d$ failing WLP from degree $d-1$ to degree $d$ we can
always assume that the three exponents are different, i.e. $0\le a< b<
c\le d-1$ because we have:

\begin{lem} Let $I\subset K[x,y,z]_d$ be an ideal invariant under the
action of $M_{a,b,c}.$
If $a=b$ or $b=c$ then I has the WLP.
\end{lem}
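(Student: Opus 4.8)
The plan is to exhibit an explicit linear form $L$ witnessing the Weak Lefschetz Property by showing that, when two of the three exponents coincide, the ideal $I$ is supported on only two of the three variables, so that the third variable acts cleanly. Suppose $a=b$ (the case $b=c$ being symmetric after a permutation of the variables, which is legitimate since we only need to produce \emph{some} Lefschetz element). A monomial $x^{\alpha}y^{\beta}z^{\gamma}$ of degree $d$ is invariant under $M_{a,a,c}$ exactly when $a\alpha+a\beta+c\gamma\equiv 0\pmod d$, i.e. $a(\alpha+\beta)+c\gamma\equiv 0\pmod d$. Because $\alpha+\beta+\gamma=d$, we may rewrite this congruence purely in terms of $\gamma$: $a(d-\gamma)+c\gamma\equiv(c-a)\gamma\equiv 0\pmod d$. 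The key observation is that this condition depends only on $\gamma$ and not on how the remaining degree $d-\gamma$ is split between $x$ and $y$.

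First I would use this observation to describe the generators explicitly. For each value of $\gamma$ with $0\le\gamma\le d$ satisfying $(c-a)\gamma\equiv 0\pmod d$, \emph{all} monomials $x^{\alpha}y^{\beta}z^{\gamma}$ with $\alpha+\beta=d-\gamma$ are invariant. In other words, the degree-$d$ piece of $I$ is spanned by $z^{\gamma}\cdot R_{d-\gamma}$, where $R_{d-\gamma}$ denotes the full space of forms of degree $d-\gamma$ in $x$ and $y$, as $\gamma$ ranges over the admissible residues. Consequently $I$ is stable under the full binary symmetry in $x,y$: in each admissible degree it contains the entire $(x,y)$-form times the appropriate power of $z$.

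The heart of the argument is then to verify WLP directly for a Lefschetz element of the form $L=x-y$ (or any $L$ not involving $z$). Multiplication by such $L$ maps $z^{\gamma}\cdot R_{d-1-\gamma}$ into $z^{\gamma}\cdot R_{d-\gamma}$, so the multiplication map $\times L\colon (R/I)_{d-1}\to(R/I)_d$ splits as a direct sum over the values of $\gamma$ of the maps $\times L$ on the corresponding binary pieces. For the binary polynomial ring $K[x,y]$, multiplication by a general linear form always has maximal rank in every degree (the monomial ideal picture reduces to a two-variable Lefschetz statement, which holds). Since in each $\gamma$-block either the source or target is a full binary-form space quotiented only by the $z$-divisibility condition, each block map has maximal rank, and hence so does their direct sum. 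This establishes WLP and therefore the lemma.

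The main obstacle I anticipate is bookkeeping rather than conceptual: one must check carefully that the direct-sum decomposition by $z$-degree $\gamma$ is respected by both $I$ and the chosen $L$, and that within each block the relevant quotient really does reduce to a genuine two-variable Lefschetz situation where maximal rank is automatic. A cleaner alternative, which I would pursue if the blockwise analysis proves delicate, is to invoke Theorem \ref{teathm} in the contrapositive: if $I$ failed WLP in degree $d-1$ then the generators $F_1,\dots,F_r$ would become linearly dependent on a general hyperplane $H$, but the reduction $z\equiv 0$ (or the reduction corresponding to $H$) keeps the $z^{\gamma}R_{d-\gamma}$ blocks linearly independent because each block retains a surviving binary-form representative. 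Either route converts the stated divisibility structure into the required maximal-rank conclusion.
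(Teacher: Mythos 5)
Your opening analysis is correct and useful: when $a=b$ the invariance condition for $x^{\alpha}y^{\beta}z^{\gamma}$ reduces to $(c-a)\gamma\equiv 0\pmod d$, so $I_d$ is a direct sum of full blocks $z^{\gamma}K[x,y]_{d-\gamma}$; in particular the block $\gamma=0$ is always admissible, so $(x,y)^d\subseteq I$, which is exactly the paper's key observation. But the proof then goes wrong at the choice of Lefschetz element. A linear form $L=x-y$ (or any $L$ not involving $z$) does \emph{not} work, and the inference ``each block map has maximal rank, hence so does their direct sum'' is false: a direct sum of maximal-rank maps need not have maximal rank. Concretely, in degree $d-1$ nothing is in $I$, so the block $\gamma=0$ is $K[x,y]_{d-1}$ (dimension $d$) and it maps to the block $K[x,y]_d\subseteq I_d$, i.e.\ to zero in $(R/I)_d$ --- surjective but with a $d$-dimensional kernel; meanwhile (unless $c=a$, the degenerate case $I=R_d$) the block $\gamma=1$ survives in degree $d$ and the map $zK[x,y]_{d-2}\to zK[x,y]_{d-1}$ is injective but not surjective. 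The sum is therefore neither injective nor surjective. For $d=3$, $M_{0,0,1}$ gives $I=(x^3,x^2y,xy^2,y^3,z^3)$ and $\times(x-y):(R/I)_2\to (R/I)_3$ has rank at most $3$ between spaces of dimensions $6$ and $5$. The Lefschetz element must involve $z$: since every degree-$d$ monomial outside $I$ is divisible by $z$, the map $\times z$ (and likewise $\times(x+y+z)$, which is what the paper uses, with the extra terms $x\cdot m/z$, $y\cdot m/z$ absorbed into $I$ by induction on $z$-degree) is \emph{surjective} from degree $d-1$ onward and injective in lower degrees, which gives WLP.

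Your fallback via Theorem \ref{teathm} also does not go through. That theorem requires $r\le\binom{n+d-1}{n-1}=d+1$, whereas here $\mu(I)\ge d+2$ (the block $\gamma=0$ alone contributes $d+1$ generators, and $z^d$ is always invariant); this hypothesis is not a technicality --- with more than $d+1$ generators they necessarily become linearly dependent on every hyperplane (indeed the $d+1$ monomials $x^iy^{d-i}$ already span the restriction of $R_d$ to $H$), yet this no longer implies failure of WLP because the multiplication map can be surjective rather than injective. That is precisely the situation in this lemma, so the contrapositive argument cannot be rescued. The repair is short: keep your block decomposition only to conclude $(x,y)^d\subseteq I$, then prove surjectivity of $\times(x+y+z)$ (or $\times z$) in degrees $\ge d-1$ as above.
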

\begin{proof} Assume $a=b$ (analogous argument works for $b=c$). Then,
$I$ contains $(x,y)^d$ and $\times (x+y+z): (K[x,y,z])_{d-1}\longrightarrow
(K[x,y,z]/I)_{d}$ is surjective. Therefore, $I$ has WLP.
\end{proof}

\vskip 4mm
\subsection{$d$ prime}

Assume first that $d\geq 3$ is an odd prime.  Let $1, e, e^2, \ldots,
e^{d-1}$ be the $d$-th roots of $1$. Then, for any $i=1, \ldots, d-1$,
$e^i$ is a primitive root of $1$.

As observed in Remark \ref{proportionality}, we can assume $a=0$. Moreover we can
denote by $e$ the primitive $d$-th root of $1$ at the second position in
the diagonal of the matrix, in other words, we can assume $b= 1$. So we
are reduced to study the action on $K[x,y,z]_d$ by the representations
$M_a =\begin{pmatrix}
1 &       0 &      0\\
0 &     e &     0\\
0   &   0 &      e^a\end{pmatrix}$ of $\ZZ/d\ZZ$
for $a=2,\ldots,d-1$.

\begin{rem}\label{def:b} \rm
Let $\alpha$ be an integer with $2\leq \alpha\leq d-1$. We will associate to $\alpha$ two positive integers 
$b$ and $k$ defined as follows.
Since $d$ and $\alpha$ are
coprime,  there is an expression $b'\alpha -k'd=1$, with $b',k'\in \ZZ$.
Any other pair of coefficients occuring in a similar expression is of the form $(b'+nd, k'+n\alpha)$,
with $n\in \ZZ$. 

Let $b:=\min\{b'\in \ZZ
\mid  b'\alpha -k'd=1, k'\in \ZZ, b'>0\}$. Then the class of $b\pmod{d}$ is equal to $\frac{1}{\alpha}$ in
$\ZZ/d\ZZ$.

Let $k$
be the unique integer such that $b\alpha -kd=1$. It results that
 $k>0$. Moreover $1<b<d$. Indeed $b\neq 1$ because $\alpha<d$, and
$b\neq d$ otherwise $(\alpha-k)d=1$.
\end{rem}

\begin{lem} \label{lem:b}
\begin{enumerate}
\item If $\alpha=2$, then $\frac{1}{\alpha}=\frac{d+1}{2}$, $k=1$;
\item if $\alpha=d-1$, then $\frac{1}{\alpha}=d-1$, $k=d-2$;
\item if $\alpha<d-1$ then $\frac{1}{\alpha}\neq \alpha$.
\end{enumerate}
\end{lem}
\begin{proof} The first two assertions are straightforward. For the
third one,  $\alpha=1/\alpha$ gives $\alpha^2=1$ and
 since $d$ is prime, $d$ must divide $\alpha-1$ or $\alpha+1$,
but both cases are impossible.
\end{proof}
\bigskip

We consider now two matrices $M_a$ and $M_b$ and we want to understand
when their actions on $K[x,y,z]_d$ are equivalent.

\begin{prop}\label{prop:equivmat}
Assume  $2\leq a, b\leq d-1$. 
Then
the actions on $K[x,y,z]_d$ of the representations
$M_a$, $M_b$ of $\ZZ/d\ZZ$ are equivalent  if and only if either
$b=d-a+1$ or $(e^a)^b=e$.
\end{prop}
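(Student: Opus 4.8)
The plan is to translate the statement into the combinatorics of invariant monomials and then to realize the two conditions by two explicit transpositions of the variables. Following the notation of Theorem~\ref{upper}, a monomial $x^{d-m-n}y^mz^n$ of degree $d$ is invariant under $M_c=\mathrm{diag}(1,e,e^c)$ exactly when $m+cn\equiv 0\pmod d$; write $V_c$ for the set of admissible pairs $(m,n)$, with $m,n\ge 0$ and $m+n\le d$. Since the invariant ideal is monomial, Definition~\ref{def:eq} says that $M_a$ and $M_b$ are equivalent if and only if some permutation $\sigma$ of $\{x,y,z\}$ carries $V_a$ bijectively onto $V_b$. Such a $\sigma$ automatically preserves the degree and the nonnegativity constraints, so the whole content is the compatibility of the defining congruences; throughout I would reduce these using the degree relation $(d-m-n)+m+n\equiv 0\pmod d$ and the invertibility of residues guaranteed by the primality of $d$.

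For the sufficiency I would produce the permutation explicitly in each case. If $ab\equiv 1\pmod d$, take $\sigma$ to be the transposition exchanging $y$ and $z$, which sends $(m,n)$ to $(n,m)$; a pair $(m,n)$ lies in $\sigma(V_a)$ precisely when $am+n\equiv 0$, and multiplying by $b$ and using $ab\equiv 1$ rewrites this as $m+bn\equiv 0$, the defining congruence of $V_b$, so $\sigma(V_a)=V_b$. If instead $b=d-a+1$, i.e.\ $b\equiv 1-a\pmod d$, take $\sigma$ to be the transposition exchanging $x$ and $y$, so that the $y$-exponent of the image is $d-m-n$; invariance of the image under $M_b$ reads $(d-m-n)+bn\equiv 0$, which by the degree relation becomes $m\equiv(b-1)n$, and this matches the condition $m\equiv -an$ of $V_a$ exactly when $b\equiv 1-a$. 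In both cases the generating monomials coincide after relabelling the variables, so the ideals agree up to $\sigma$.

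For the necessity I would pass to weights. The set $V_c$ should determine the unordered triple $\{0,1,c\}\subset\ZZ/d\ZZ$ up to the affine substitutions $t\mapsto\lambda t+\mu$ with $\lambda$ a unit, because such a substitution leaves every congruence of the form $w_0(d-m-n)+w_1m+w_2n\equiv 0$ unchanged on degree-$d$ exponents. Since a permutation of the variables simply permutes the weight triple, $\sigma(V_a)=V_b$ would force $\{0,1,a\}$ and $\{0,1,b\}$ to be affinely equivalent subsets of $\ZZ/d\ZZ$. Matching an ordered pair of $\{0,1,b\}$ with an ordered pair of $\{0,1,a\}$ and computing the image of the remaining point, the transposition fixing the weight $0$ gives $b\equiv 1/a$ (the reciprocal $1/\alpha$ of Remark~\ref{def:b}, cf.\ Lemma~\ref{lem:b}) and the transposition fixing the weight $1$ gives $b\equiv 1-a$, i.e.\ the two conditions in the statement; the remaining matchings are compositions of these two, so that every equivalence among the normalized representations is generated by the two displayed moves. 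Here primality of $d$ keeps $a$, $1-a$ and their inverses well defined and the identifications nondegenerate.

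The main obstacle is the necessity direction, and within it two points require care. First, one must justify that the monomial set $V_c$ genuinely reconstructs its weight triple up to affine equivalence, that is, that no accidental coincidence of solution sets can arise from inequivalent weight data; this is precisely where the primality of $d$ is essential, since it prevents the congruences from degenerating and the relevant residues from failing to be invertible. Second, the exhaustive bookkeeping must be organised so that each permuted triple is renormalised back to the shape $\{0,1,\ast\}$ and the resulting parameter is read off unambiguously. I expect that organising this enumeration cleanly, so that it collapses to the reciprocal $a\mapsto 1/a$ and the reflection $a\mapsto 1-a$, rather than any individual congruence manipulation, is where the real effort of the proof lies.
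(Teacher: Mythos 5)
Your ``if'' direction is exactly the paper's own proof: the paper checks that $b=d-a+1$ corresponds to exchanging $x$ and $y$, via the identity $(d-m-n)+(d-a+1)n\equiv-(m+an)\pmod d$, and that $ab\equiv1\pmod d$ corresponds to exchanging $y$ and $z$, and your two congruence computations are correct. Be aware, though, that this is \emph{all} the paper proves; its argument for Proposition~\ref{prop:equivmat} stops at sufficiency and never returns to the ``only if''.

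The necessity half of your proposal is where the genuine problem sits, and your own framework exposes it. Equivalence in Definition~\ref{def:eq} allows an \emph{arbitrary} permutation of the three variables, so your enumeration must run over all six permutations of the weight triple $\{0,1,a\}$; after renormalising each permuted triple back to the shape $\{0,1,\ast\}$ by an affine substitution, you obtain the six cross-ratio values $a$, $1/a$, $1-a$, $a/(a-1)$, $1/(1-a)$, $1-1/a$ (exactly the six-element classes of Theorem~\ref{thm:number} and Remark~\ref{crossratio}). The last three satisfy neither $b\equiv1/a$ nor $b\equiv1-a$ in general, so your closing remark that ``the remaining matchings are compositions of these two'' does not rescue the stated biconditional: lying in the orbit of the group generated by $t\mapsto1/t$ and $t\mapsto1-t$ is not the same as being the image of $a$ under one of those two maps. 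Concretely, for $d=5$ the ideals $I_3=(x^5,y^5,z^5,x^2y^2z,xyz^3)$ and $I_4=(x^5,y^5,z^5,x^3yz,xy^2z^2)$ coincide after exchanging $x$ and $z$ (here $4\equiv 3/(3-1)\pmod 5$), yet $4\neq d-3+1=3$ and $3\cdot4\not\equiv1\pmod5$; so the literal ``only if'' fails, and an honest completion of your argument would prove instead that the equivalence classes are the full orbits of $\langle t\mapsto1/t,\;t\mapsto1-t\rangle$, which is the statement the paper actually uses afterwards. The other point you flag --- that the solution set $V_c$ determines $c$, i.e.\ the weight triple up to affine substitution --- is genuinely needed but easy for $d$ prime: the unique invariant monomial $x^{d-1-n_1}yz^{n_1}$ with $y$-exponent $1$ recovers $c\equiv-1/n_1\pmod d$.
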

\begin{proof} Let $x^{d-m-n}y^mz^n$ be a monomial of degree $d$. This
monomial is invariant for the action of the first matrix if and only if
$m+an\equiv 0 \pmod{d}$. But this congruence is equivalent to
$(d-m-n)+(d-a+1)n\equiv 0 \pmod{d}$, which means that the monomial
$x^my^{d-m-n}z^n$ obtained from  $x^{d-m-n}y^mz^n$ interchanging the
roles of $x$ and $y$ is invariant for the action of the second matrix.
The second equivalence corresponds to interchanging the roles of $y$ and
$z$, because if $(e^a)^b=e$ then $m+an\equiv 0\pmod{d}$ if and only if
$bm+n\equiv 0 \pmod{d}$.
\end{proof}

\begin{cor}\label{cor:equivmat}
The three matrices
$M_2$, $M_{\frac{d+1}{2}}$, $M_{d-1}$ define equivalent actions on the
homogenous polynomials of degree $d$.
\end{cor}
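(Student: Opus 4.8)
The plan is to show that the three matrices $M_2$, $M_{\frac{d+1}{2}}$, and $M_{d-1}$ all fall into the same equivalence class by a double application of Proposition \ref{prop:equivmat}. The proposition gives two ways for two representations $M_a$ and $M_b$ to be equivalent: either $b=d-a+1$, or $(e^a)^b=e$. So the task reduces to checking that the three values $2$, $\frac{d+1}{2}$, and $d-1$ are linked pairwise (or in a chain) by one of these two relations.

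First I would connect $M_2$ and $M_{d-1}$. Taking $a=2$, the first condition $b=d-a+1$ yields $b=d-1$, so $M_2$ and $M_{d-1}$ are immediately equivalent. Next I would connect $M_2$ and $M_{\frac{d+1}{2}}$ using the second condition. By Lemma \ref{lem:b}(1), when $\alpha=2$ the inverse $\frac{1}{\alpha}$ in $\ZZ/d\ZZ$ equals $\frac{d+1}{2}$, i.e. $2\cdot\frac{d+1}{2}\equiv 1\pmod d$. Hence with $a=2$ and $b=\frac{d+1}{2}$ we have $(e^a)^b=e^{ab}=e^{2\cdot\frac{d+1}{2}}=e$, which is exactly the second equivalence condition. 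Therefore $M_2$ and $M_{\frac{d+1}{2}}$ are equivalent. Combining the two links and using transitivity of the equivalence relation from Definition \ref{def:eq}, all three matrices define equivalent actions.

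I expect no real obstacle here, since the corollary is essentially a direct bookkeeping consequence of the proposition together with the explicit value of $1/\alpha$ for $\alpha=2$ supplied by the lemma. The only point requiring a word of care is that the equivalence relation in Definition \ref{def:eq} is genuinely transitive: it is defined via equality of the associated invariant ideals up to a permutation of the variables, and composing two admissible permutations again gives a permutation of $\{x,y,z\}$, so transitivity holds. With that remark in place, the chain $M_{d-1}\sim M_2\sim M_{\frac{d+1}{2}}$ closes the argument. One could also double-check consistency by verifying the remaining pair directly: for $a=d-1$ and $b=\frac{d+1}{2}$ one has $ab=(d-1)\frac{d+1}{2}=\frac{d^2-1}{2}\equiv -\frac12\pmod d$, which is not $1$, while $d-a+1=2\neq\frac{d+1}{2}$ in general; so $M_{d-1}$ and $M_{\frac{d+1}{2}}$ are not related \emph{directly} but only through $M_2$, confirming that the transitive step is the essential one.
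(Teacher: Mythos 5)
Your proof is correct and takes essentially the same route as the paper, which deduces the corollary from Proposition \ref{prop:equivmat} via $d-1=d-2+1$ and the inverse relation $2\cdot\frac{d+1}{2}=d+1\equiv 1\pmod d$; note that the paper's printed justification $e^{(\frac{d+1}{2})^2}=e$ appears to be a typo for $e^{2\cdot\frac{d+1}{2}}=e$, since $\bigl(\frac{d+1}{2}\bigr)^2\equiv\frac{1}{4}\not\equiv 1\pmod d$ once $d>3$, and your version is the correct one. Your added remarks on transitivity of the equivalence and on the absence of a direct link between $M_{d-1}$ and $M_{\frac{d+1}{2}}$ are accurate but not needed beyond the chain $M_{d-1}\sim M_2\sim M_{\frac{d+1}{2}}$.
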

\begin{proof}  It immediately follows from Proposition
\ref{prop:equivmat} because $d-1=d-2+1$ and $e^{(\frac{d+1}{2})^2}=e$.
\end{proof}

\begin{rem} \rm
Assume  $2< a< d-1$, $a\neq \frac{d+1}{2}$. Then there exists one and
only one $b$, $2<b<d-1$, $b\neq \frac{d+1}{2}$ such that  $(e^a)^b=e$.
Indeed,
$b$ is nothing else than $1/a$ in $\ZZ/d\ZZ$ (see Remark \ref{def:b}
and  Lemma \ref{lem:b}).
\end{rem}

Before stating next theorem, we remark that any prime number $d\geq 5$
can be written in one of the forms $6n-1$ or $6n+1$. 

Let us premise a lemma, 
which will also be used in the next section.

\begin{lem} \label{lem:d} Let $p\geq 5$ be a prime, let $d=p^r$ with $r\geq 1$. Then there
exists an integer $a$, $2<a<d-1$, such that $a(d-a+1)\equiv 1 \pmod{d}$
if and only if $p=6n+1$.
\end{lem}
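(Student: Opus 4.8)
The plan is to turn the stated divisibility condition into a quadratic congruence and then read off its solvability from the theory of quadratic residues, handling the prime-power case by Hensel lifting.

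First I would reduce the congruence. Since $d\equiv 0\pmod d$, we have $d-a+1\equiv 1-a\pmod d$, hence
\[
a(d-a+1)\equiv a(1-a)=a-a^2\pmod d,
\]
and so $a(d-a+1)\equiv 1\pmod d$ is equivalent to $a^2-a+1\equiv 0\pmod{d}$, with $d=p^r$. The whole problem thus becomes: does the quadratic congruence $a^2-a+1\equiv 0\pmod{p^r}$ admit a solution $a$ with $2<a<d-1$?

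Next I would solve this quadratic congruence. The discriminant of $a^2-a+1$ is $-3$; since $p$ is odd, $2$ is a unit modulo $p^r$ and completing the square shows that the congruence is solvable mod $p$ if and only if $-3$ is a quadratic residue mod $p$. For the prime-power case $r>1$ I would invoke Hensel's lemma: the formal derivative $2a-1$ of $a^2-a+1$ cannot vanish at a root modulo $p$, since $2a\equiv 1$ would force $a^2-a+1\equiv 3\cdot 4^{-1}\not\equiv 0\pmod p$ for $p\geq 5$. Hence every root mod $p$ is simple and lifts to a root mod $p^r$, so $a^2-a+1\equiv 0\pmod{p^r}$ is solvable exactly when it is solvable mod $p$, i.e. exactly when $-3$ is a quadratic residue mod $p$. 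The conceptual heart is then the standard evaluation of the Legendre symbol: by quadratic reciprocity and its supplements, for $p\geq 5$ one has $\left(\frac{-3}{p}\right)=\left(\frac{p}{3}\right)$, which equals $1$ precisely when $p\equiv 1\pmod 3$. As a prime $p\geq 5$ is congruent to $\pm 1\pmod 6$, the condition $p\equiv 1\pmod 3$ is exactly $p=6n+1$, which gives both directions of the claimed equivalence.

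Finally I would verify that a root can be taken in the prescribed open range $2<a<d-1$. Evaluating $a^2-a+1$ at the boundary values $a=0,1,2,d-1$ yields $1,1,3,3$ respectively, none of which is divisible by $d=p^r\geq 5$; thus no root of the congruence occupies a boundary slot, so when a root exists its representative in $\{0,\dots,d-1\}$ lies in $\{3,\dots,d-2\}$, giving a valid $a$ with $2<a<d-1$. I expect the main obstacle to be purely bookkeeping rather than conceptual: the two genuinely delicate points are the Hensel lift for $r>1$ and this endpoint check, but both are routine once the reduction to $a^2-a+1\equiv 0\pmod{p^r}$ and the quadratic-residue computation are in place.
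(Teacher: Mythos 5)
Your proof is correct, and the key step is genuinely different from the paper's. Both arguments begin with the same reduction, rewriting $a(d-a+1)\equiv 1 \pmod d$ as $a^2-a+1\equiv 0\pmod{p^r}$, but they then diverge. The paper recognizes $x^2-x+1$ as the sixth cyclotomic polynomial, so that a solution is a primitive sixth root of unity in the cyclic group $(\ZZ/p^r\ZZ)^*$; such an element exists if and only if $6$ divides $\varphi(p^r)=p^{r-1}(p-1)$, i.e.\ if and only if $p\equiv 1\pmod 6$. You instead complete the square, reduce solvability to whether $-3$ is a quadratic residue modulo $p$, handle $r>1$ by Hensel lifting (correctly checking that the derivative $2a-1$ cannot vanish at a root when $p\geq 5$, so every root mod $p$ is simple), and evaluate $\left(\tfrac{-3}{p}\right)=\left(\tfrac{p}{3}\right)$ via reciprocity and its supplements. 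Both routes are sound and standard; the paper's is shorter and more conceptual once one spots $\Phi_6$, while yours is more computational but entirely self-contained. One genuine merit of your write-up is the explicit endpoint check: you verify that $a^2-a+1$ takes the values $1,1,3,3$ at $a=0,1,2,d-1$, none divisible by $p^r\geq 5$, so any solution automatically lies in the prescribed open range $2<a<d-1$ --- a point the paper's proof passes over in silence.
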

\begin{proof} The relation $a(d-a+1)\equiv 1 \pmod{d}$ holds true if and
only if $a^2-a+1\equiv 0 \pmod{d}$, i.e. $a$ is a root modulo $d$ of the
cyclotomic polynomial $\phi_6(x)=x^2-x+1$.  This means that the cyclic
group $\ZZ/6\ZZ$ of the roots of order $6$ of $1$ is contained in
$(\ZZ/p^r\ZZ)^*$. This happens if and only if $6$ divides the cardinality of
$(\ZZ/p^r\ZZ)^*$, which is $\varphi(p^r)=p^{r-1}(p-1)$, if and only if $p\equiv 1
\pmod 6$. The thesis follows.
\end{proof}

\begin{thm}\label{thm:number} Let $d\geq 5$ be a prime number. Then the
number of non-equivalent actions of representations $M_a$, $2\le a\le
d-1$, of  $\ZZ/d\ZZ$ on $K[x,y,z]_d$ is equal to $n$ if $d=6n-1$, and to
$n+1$ if $d=6n+1$.
\end{thm}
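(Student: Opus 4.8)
The plan is to recast the count as an orbit-counting problem for a finite group acting on the index set $\{2,3,\dots,d-1\}$, and then apply Burnside's lemma. By Proposition \ref{prop:equivmat}, two representations $M_a,M_b$ with $2\le a,b\le d-1$ are equivalent precisely when $b$ is obtained from $a$ by one of the two involutions $T_1\colon a\mapsto d-a+1$ and $T_2\colon a\mapsto a^{-1}\pmod d$ (the condition $(e^a)^b=e$ being the same as $ab\equiv 1\pmod d$). Since equivalence is defined through permutations of the variables, which form $S_3$, the full equivalence relation is the orbit relation of the group $\Gamma:=\langle T_1,T_2\rangle$ acting on $S:=\{2,\dots,d-1\}=\mathbb{F}_d\setminus\{0,1\}$. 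First I would check that $T_1,T_2$ are non-commuting involutions whose product $T_1T_2\colon a\mapsto 1-a^{-1}$ has order $3$, so that $\Gamma\cong S_3$ (the anharmonic group); its elements are the identity, three reflections $T_1,T_2,T_3$ (where $T_3=T_1T_2T_1\colon a\mapsto a/(a-1)$), and the two $3$-cycles $T_1T_2,\ T_2T_1$.

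The heart of the argument is the fixed-point count for each element of $\Gamma$. For the reflections, $T_1(a)=a$ forces $a=\tfrac{d+1}{2}$, $T_2(a)=a$ forces $a^2\equiv 1$ hence $a=d-1$, and $T_3(a)=a$ forces $a^2\equiv 2a$ hence $a=2$; for $d\ge 5$ these three values are distinct, lie in $S$, and by Corollary \ref{cor:equivmat} constitute a single orbit of size $3$. For the two $3$-cycles, $T_1T_2(a)=a$ is equivalent to $a^2-a+1\equiv 0\pmod d$, i.e. $a$ is a root of $\phi_6$; by Lemma \ref{lem:d} (with $r=1$) this is solvable if and only if $d\equiv 1\pmod 6$, in which case it has exactly the two primitive sixth roots of unity as solutions, forming a single orbit of size $2$. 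The identity fixes all $d-2$ elements.

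I would then assemble these counts by Burnside's lemma, $|S/\Gamma|=\tfrac16\sum_{g\in\Gamma}|\mathrm{Fix}(g)|$. When $d=6n-1$ the $3$-cycles have no fixed points and the sum is $(d-2)+3\cdot 1+2\cdot 0=d+1=6n$, giving $n$ orbits; when $d=6n+1$ the sum is $(d-2)+3\cdot 1+2\cdot 2=d+5=6n+6$, giving $n+1$ orbits. Equivalently, one may argue directly: apart from the distinguished size-$3$ orbit $\{2,\tfrac{d+1}{2},d-1\}$ and, when $d\equiv 1\pmod 6$, the size-$2$ orbit of roots of $\phi_6$, every orbit has full size $6$, since an element with a larger stabilizer would be fixed simultaneously by a reflection and a $3$-cycle, and the reflection-fixed set $\{2,\tfrac{d+1}{2},d-1\}$ is disjoint from the roots of $\phi_6$ for $d\ge 5$.

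The delicate point is not the bookkeeping but making sure the equivalence relation really is the full $S_3$-orbit relation: Proposition \ref{prop:equivmat} explicitly names only two of the three variable-transpositions, so I must confirm that $T_1,T_2$ already generate all of $S_3$ (the third reflection $a\mapsto a/(a-1)$ and the two $3$-cycles arising as words in $T_1,T_2$) and that no generic orbit collapses below size $6$. Once the $\Gamma\cong S_3$ picture and the $\phi_6$-criterion of Lemma \ref{lem:d} are in place, the remaining computation is routine.
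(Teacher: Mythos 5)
Your proof is correct, and it packages the count differently from the paper. The paper argues by direct enumeration of orbit types: Corollary \ref{cor:equivmat} gives the single class $\{2,\tfrac{d+1}{2},d-1\}$ of size $3$, Lemma \ref{lem:d} gives the size-$2$ class of roots of $\phi_6$ exactly when $d=6n+1$, and the remaining values are asserted to fall into classes of six elements $a,\,1/a,\,1/(d-a+1),\,d-a+1,\,d-b+1,\,d-c+1$, after which the count is immediate. You instead make explicit that the equivalence is the orbit relation of the anharmonic group $\Gamma=\langle T_1,T_2\rangle\cong S_3$ acting on $\mathbb{F}_d\setminus\{0,1\}$ and apply Burnside's lemma; the fixed-point data you need (each reflection fixes exactly one of $2,\tfrac{d+1}{2},d-1$; the $3$-cycles are fixed exactly at the roots of $\phi_6$, governed by Lemma \ref{lem:d}) is precisely the information the paper uses, but the orbit-counting formula assembles it without any case analysis on orbit sizes. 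What your route buys is rigor at the one point the paper glosses over: that every ``generic'' class has exactly six elements is automatic in your framework, since a smaller orbit forces a nontrivial stabilizer, hence a fixed point of some reflection or $3$-cycle, and you verify these fixed-point sets are the two exceptional orbits and are disjoint for $d\ge 5$. Your own flagged worry --- whether Proposition \ref{prop:equivmat} really yields the full $S_3$-orbit relation rather than just two transpositions --- is resolved exactly as you suggest: the two stated conditions come from the transpositions $(x\,y)$ and $(y\,z)$, which generate $S_3$, and the paper itself implicitly takes the transitive closure when it lists six-element classes (see also Remark \ref{crossratio}, which identifies the orbit with the cross-ratios of a fixed $j$-invariant). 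The computation $\tfrac16\bigl((d-2)+3+2f\bigr)$ with $f\in\{0,2\}$ matches the stated values.
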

\begin{proof} By Corollary \ref{cor:equivmat}, there is a class with
three elements corresponding to $a=2, \frac{d+1}{2}, d-1$. If $d=6n+1$, there is a class with two elements
$a_0, d-a_0+1$, where $a_0$ is a root of $\phi_6(x)$ (Lemma
\ref{lem:d}). The remaining values of $a$ are subdivided in classes with
$6$ elements that are $a, b, c,d-a+1,  d-b+1, d-c+1$, where $b=1/a
\pmod{d}$ and $c=1/(d-a+1) \pmod{d}$. This immediately gives the number
of classes, in view of Proposition \ref{prop:equivmat}.
\end{proof}

\begin{rem}\label{crossratio} \rm
The six numbers $a, b, c,d-a+1,  d-b+1, d-c+1$, where $b= 1/a \pmod{d}$
and $c=1/(d-a+1) \pmod{d}$, are the values of the cross-ratio with the
same $j-$invariant, modulo $d$. The three values $2, \frac{d+1}{2}, d-1$
correspond to the harmonic cross-ratio, and the pair $a_0, d-a_0+1$ to
the equianharmonic cross-ratio.
\end{rem}

The matrices in Corollary \ref{cor:equivmat}  define the generalized
classical Togliatti systems (see Definition \ref{genclassicalTogliatti}).
We will see that the  other classes we have just found define all
GT-systems for $d$ prime. To this end we will study  the degree $d$
monomials invariant under these actions.

\begin{ex} \rm
Here are the classes of integers $\ge 2$ defining equivalent actions
$M_a$ on forms of degree $d$, for the primes $d=5,\ldots,17$.

$d=5$, one class: $(2,3,4);$

$d=7$, two classes: $(2,4,6)$, $(3,5);$

$d=11$, two classes: $(2,6,10)$, $(3,4, 5,7,8, 9)$;

$d=13$, three classes: $(2,7,12)$, $(4,10)$, $(3, 5,6,8, 9, 11)$;

$d=17$, three classes: $(2,9,16)$, $(3,6,8,10,12,15)$, $(4,5,7,11,13,14)$.
\end{ex}

Let us fix a prime $d$ and an integer $a$, $2\leq a\leq d-1$. We want to
count the number of monomials of degree $d$ which are invariant under
the action of the matrix $M_a$.

\begin{thm}\label{numberinv} Let $d\geq 3$ be a prime. Then for any
integer $a$, with $2\leq a\leq d-1$, the number of  invariant monomials
under the action of $M_a$  is $3+{\frac{d-1}{2}}$.
\end{thm}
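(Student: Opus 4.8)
The plan is to count the monomials $x^{d-m-n}y^mz^n$ of degree $d$ that are invariant under $M_a$, which by the criterion established in the proof of Theorem \ref{upper} means counting the pairs $(m,n)$ with $m,n\geq 0$, $m+n\leq d$, satisfying the congruence $m+an\equiv 0\pmod d$. Since $d$ is prime and $2\leq a\leq d-1$, for each choice of $n$ in the range $0\leq n\leq d$ the residue class of $m$ modulo $d$ is uniquely determined by $m\equiv -an\pmod d$, so I would organize the count by iterating over $n$ and asking how many valid $m$ there are for each $n$.

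First I would dispose of the extreme values of $n$. For $n=0$ the congruence forces $m\equiv 0\pmod d$, and with $0\leq m\leq d$ this gives the two monomials $m=0$ (namely $x^d$) and $m=d$ (namely $y^d$). For $n=d$ we must have $m=0$, and then $m+an=ad\equiv 0\pmod d$ holds automatically, giving the monomial $z^d$. So the three \lq\lq vertex'' monomials $x^d,y^d,z^d$ are always invariant, contributing $3$ to the total. The core of the argument is then to count the invariant monomials with $1\leq n\leq d-1$.

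For each $n$ with $1\leq n\leq d-1$ I would show there is exactly one admissible $m$: the congruence $m\equiv -an\pmod d$ has a unique representative $m_0$ in $\{0,1,\ldots,d-1\}$, and one checks $m_0\neq 0$ (since $d\nmid an$ as $d$ is prime and $1\leq n\leq d-1$, $2\leq a\leq d-1$), so $1\leq m_0\leq d-1$ and then $m_0+n\leq (d-1)+(d-1)<2d$; I must verify $m_0+n\leq d$ to have a genuine degree-$d$ monomial. This is the only real subtlety: the range constraint $m+n\leq d$ is not automatic. The key observation to handle it is a pairing/symmetry. The substitution replacing the triple of exponents $(d-m-n,m,n)$ by the \lq\lq complementary'' one obtained from interchanging the roles of the variables (as in the proof of Proposition \ref{prop:equivmat}) pairs up solutions and shows that among the $d-1$ values of $n$ in $\{1,\ldots,d-1\}$, exactly half of them, namely $\frac{d-1}{2}$, yield an admissible pair with $m_0+n\leq d$ having both $m_0\geq 1$ and $n\geq 1$.

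The main obstacle is precisely this range count: naively each residue $n$ gives a candidate $m_0\in\{1,\ldots,d-1\}$, but only those with $m_0+n\leq d$ correspond to honest monomials, and one must prove this happens exactly $\frac{d-1}{2}$ times. I expect to resolve it by exhibiting an explicit involution on the solution set with $1\le m,n$ that swaps the condition $m+n\leq d$ with $m+n>d$ (equivalently $m+n\geq d+1$, and one checks $m+n=d$ cannot occur for these interior solutions since that would force a vertex-adjacent edge monomial forbidden by $d$ prime and $a\neq 1$), so that the interior solutions split into two equinumerous classes and the surviving class has cardinality $\frac{d-1}{2}$. Adding the three vertex monomials gives the total $3+\frac{d-1}{2}$, as claimed.
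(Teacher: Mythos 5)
Your argument is correct and follows essentially the same route as the paper: isolate the three vertex monomials $x^d,y^d,z^d$, rule out invariant monomials on the edges and on the line $m+n=d$ using that $d$ is prime and $a\not\equiv 0,1$, and then pair the $d-1$ interior candidate solutions of $m+an\equiv 0\pmod d$ via $(m,n)\leftrightarrow(d-m,d-n)$ so that exactly one member of each of the $\frac{d-1}{2}$ pairs satisfies $m+n<d$. The only cosmetic difference is that you parametrize solutions by $n$ while the paper parametrizes by $m$ (via $n_m\equiv n_1m\pmod d$); the pairing and the exclusion of $m+n=d$ are identical in substance.
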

\begin{proof} The monomial  $x^{d-m-n}y^mz^n$, with $m,n\geq 0$,
$m+n\leq d$, is invariant  under the action of $M_a$
if and only if $m+an\equiv 0\  \pmod{d}$. So we look for the pairs
$(m,n)$ of non-negative integers  such that $0\leq m+n\leq d$ and there
exists  a relation $m+an=kd$ for a suitable integer $k\geq 0$.  The
pairs $(0,0), (d,0), (0,d)$ are  clearly of this type; they correspond
to the monomials $x^d$, $y^d$, and $z^d$. So, we assume from now on
$m<d$, $n<d$ and  $m+n>0$.  We observe  that, if $m+n=d$ and  the
corresponding monomial $y^{d-n}z^n$ is invariant under the action of
$M_a$, then $d+(a-1)n=kd$,  i.e. $(a-1)n=(k-1)d$, which is impossible
under our assumption that   $n<d$. Moreover the monomials of the form
$x^{d-m}y^m$, $x^{d-n}z^n$ are certainly not invariant, the first ones
because $e^m\neq 1$, the second one because $an$ cannot be a multiple of
$d$. So from now on we can also assume $m,n>0$ and $m+n<d$.

We begin looking for pairs $(m,n)$ with $m=1$. We apply Lemma
\ref{lem:b} to $a,d$: there exist unique $k,n_1$ with $dk-an_1=1$,
$k>0$, $0<n_1<d$.
So we obtain a unique invariant monomial $x^{d-1-n_1}yz^{n_1}$, of
degree $d$  with $m=1$. We easily check that the inequality $1+n_1<d$ is
also true.

We take now another $m$ and define $n_m:\equiv n_1m  \pmod{d}$, i.e. the
remainder of the division of $n_1m$ by $d$. The pair $(m,n_m)$ defines a
monomial of degree $d$ invariant under the action of $M_a$ if and only if
$m+n_m\leq d$.  So we want to count for how many $m$'s with $1\leq m\leq
d-1$ the condition $m+n_m\leq d$ holds true.
We consider the table
\begin{align*}
&1      &n_1  &&n_1\\
&2      &2n_1&&n_2\\
&3      &3n_1&& n_3\\
&\dots&\dots&&\dots\\
&m     &mn_1&&n_m\\
&\dots &\dots&&\dots\\
&d-2   &(d-2)n_1&&n_{d-2}\\
&d-1   &(d-1)n_1&&n_{d-1}
\end{align*}
In the third column there are the remainders of the division by $d$ of
the elements of the second column. We observe that they are all
different: if $mn_1=hd+r$ and $m'n_1=h'd+r$, then  $(m-m')n_1=(h-h')d$,
which is impossible because $m-m'$ and $n_1$ are $<d$. Hence, the first
and the third column contain the same
numbers.

Now look at the last row: note that $n_{d-1}=d-n_1$ and that
$(d-1)+(d-n_1)=2d-(1+n_1)\leq d$ if and only if $d\leq 1+n_1$, a
contradiction. Therefore, the last line does not give an invariant
monomial. Now we consider the second and the last but one rows: the
corresponding pairs $(m,n)$ are respectively $(2,n_2)$ and $(d-2,
d-n_2)$, and $2d-(2+n_2)<d$ if and only if $2+n_2>d$, which means that of
these two pairs one and only one defines a   monomial of degree
$d$ invariant under the action of $M_a$. We can repeat this argument for
any $m$, concluding that one and only one of the pairs $(m, n_m)$,
$(d-m, d-n_m)$ gives a monomial of degree $d$ invariant under the action
of $M_a$. Since there are $d-1$ of these pairs,   we get $\frac{d-1}{2}$
invariant monomials of degree $d$ in addition to $x^d, y^d, z^d$. This
concludes the proof.
\end{proof}

\begin{rem}\label{monom} \rm
Note that any monomial of degree $d$ different from $x^d, y^d, z^d$ and
invariant under the action of $M_a$ must contain all variables with
strictly positive exponent. In particular, by the smoothness criterion
\cite[Proposition 3.4]{MM}, this implies that the $GT$-systems for $d$
prime are all smooth.
\end{rem}

\begin{ex} \rm
For $d=7$ or $d=11$, there is only one class, in addition to the
generalized classical Togliatti systems.

If $d=7$ and $a=3$, we get the invariant monomials $x^7, x^4yz^2,
x^2y^4z, xy^2z^4, y^7, z^7$.

If $d=11$, we have $5$ invariant monomials. For instance, for
$a=3$, we get:
$$x^{11}, x^3yz^7, x^6y^2z^3, xy^4z^6, x^4y^5z^2, x^2y^8z, y^{11}, z^{11}.$$
\end{ex}

From now on, we denote by $I_a\subset K[x,y,z]$ the artinian ideal
generated by all monomials of degree $d$ invariant under the action of
$M_a$.  Given a prime integer $d\ge 3$ and $2\le a\le d-1$, we denote by
$k$ and $n_1$ the unique integers $k,n_1>0$ with  $dk-an_1=1$ and
$n_1<d$. As in the proof of Theorem \ref{numberinv}, for any integer $m$, we define $n_m:=n_1m \  \pmod{d}$, i.e.
the remainder of the division of $n_1m$ by $d$. We have seen:

\begin{prop} For any prime integer $d\ge 5$, set $d=2k+1$. Take $2\le
a\le d-1$. Then, the following holds:

\begin{itemize}
\item[(1)] $\mu(I_a)=k+3$. Moreover, there exist integers $1\le
i_1<i_2<\cdots <i_k\le d-1$ such that
$$I_a=(x^d,y^d,z^d,f_{i_1},f_{i_2},\ldots ,f_{i_k})$$
where we use the notation $f_{i_j}=x^{d-i_j-m_{i_j}}y^{i_j}z^{m_{i_j}}$ for $1\le j\le k$.
\item[(2)] $I_2$, $I_{d-1}$ and $I_{k+1}$ coincide up to permutation of
the variables.
\item[(3)] If $a\ne 2, d-1, \frac{d+1}{2}$, and $\frac{1}{a}\not\equiv d-a+1
\pmod{d}$, then, $I_a$, $I_b$. $I_c$, $I_{d-a+1}$, $I_{d-b+1}$ and
$I_{d-c+1}$, where $b=\frac{1}{a}\pmod{d}$ and $c=\frac{1}{d-a+1}\pmod{d}$,
coincide up to permutation of the variables.
    \item[(4)] If $\frac{1}{a}\equiv d-a+1\pmod{d}$ then $I_a$ and
$I_{d-a+1}$ coincide up to permutation of the variables.
     \item[(5)] $I_a$ is a smooth monomial minimal GT-system.
\end{itemize}
\end{prop}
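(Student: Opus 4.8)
The plan is to read off all five assertions from the results already established in this section, since each is essentially a repackaging of an earlier statement. I would begin with (1). By Theorem \ref{numberinv} the number of degree-$d$ monomials invariant under $M_a$ is $3+\frac{d-1}{2}=3+k=k+3$; since $I_a$ is generated precisely by these invariant monomials, all of the same degree $d$, no one of them divides another, so they constitute a minimal monomial generating set and $\mu(I_a)=k+3$. The three monomials $x^d,y^d,z^d$ are always invariant, and by Remark \ref{monom} every other invariant monomial involves all three variables with positive exponent, hence has the shape $x^{d-i-m_i}y^iz^{m_i}$ with $i,m_i>0$. The proof of Theorem \ref{numberinv} shows that for each admissible $y$-exponent $m\in\{1,\dots,d-1\}$ there is at most one invariant monomial, and that exactly $k$ values of $m$ occur; listing these $y$-exponents in increasing order $i_1<\cdots<i_k$ and letting $m_{i_j}$ be the associated $z$-exponent yields the stated presentation $I_a=(x^d,y^d,z^d,f_{i_1},\dots,f_{i_k})$.

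Next I would dispatch the equivalence statements (2)--(4), which all follow by unwinding Definition \ref{def:eq}: two actions are equivalent exactly when the associated degree-$d$ ideals coincide up to a permutation of the variables. For (2), note that $k+1=\frac{d+1}{2}$, so $M_{k+1}=M_{\frac{d+1}{2}}$, and Corollary \ref{cor:equivmat} gives the equivalence of $M_2,M_{\frac{d+1}{2}},M_{d-1}$. For (3), the proof of Theorem \ref{thm:number} organizes the values $2\le a\le d-1$ lying outside the harmonic and equianharmonic classes into orbits of size six, namely $\{a,b,c,d-a+1,d-b+1,d-c+1\}$ with $b=1/a$ and $c=1/(d-a+1)$ in $\ZZ/d\ZZ$, each orbit consisting of mutually equivalent actions by Proposition \ref{prop:equivmat}; the hypotheses $a\ne 2,d-1,\frac{d+1}{2}$ and $1/a\not\equiv d-a+1\pmod d$ guarantee that these six residues are genuinely distinct, so the six ideals coincide up to permutation. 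For (4), the condition $1/a\equiv d-a+1\pmod d$ forces the orbit to collapse to the pair $\{a,d-a+1\}$, and the equivalence $I_a\sim I_{d-a+1}$ is then the case $b=d-a+1$ of Proposition \ref{prop:equivmat}.

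Finally, (5) collects the qualitative properties: $I_a$ is monomial by Theorem \ref{galoismonomial}, it is a GT-system by Theorem \ref{upper}, it is minimal by Theorem \ref{minimal} (whose hypothesis is exactly the shape of $M_a$ used here, with $e$ in the second diagonal slot), and it is smooth by the toric smoothness criterion recorded in Remark \ref{monom}. The step demanding the most care, rather than mere citation, is the combinatorial bookkeeping in (3): verifying that the involutions $a\mapsto d-a+1$ and $a\mapsto 1/a\pmod d$ generate a full six-element orbit precisely away from the two degenerate classes, and similarly that in (1) distinct invariant monomials have distinct $y$-exponents. Both facts are, however, already contained in the proofs of Theorem \ref{thm:number} and Theorem \ref{numberinv}, so I expect the main obstacle to be only this careful orbit-counting and I regard the whole proof as an exercise in assembling the preceding results.
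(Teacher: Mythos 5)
Your proposal is correct and follows exactly the same route as the paper, whose proof consists of the same citations (the proof of Theorem \ref{numberinv} for (1), Corollary \ref{cor:equivmat} for (2), the proof of Theorem \ref{thm:number} for (3), Proposition \ref{prop:equivmat} for (4), and Theorems \ref{upper} and \ref{minimal} together with Remark \ref{monom} for (5)); you merely spell out the assembly in more detail than the paper does.
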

\begin{proof} (1) It follows from the proof of Theorem \ref{numberinv}.

(2) It follows from Corollary \ref{cor:equivmat}.

(3) See the proof of Theorem \ref{thm:number}.

(4) It is a direct application of Proposition \ref{prop:equivmat}.

(5)   It follows from Theorems \ref{upper} and \ref{minimal}, and Remark \ref{monom} for the smoothness.
\end{proof}

\vskip 4mm
\subsection{$d$ a power of a prime}

We consider now the cases when $d=p^r$ is a power of a prime $p$.  First
we observe that arguing as in the case when $d$ is an odd prime we can show that
the action on $K[x,y,z]_d$ of a representation $M_{a,b,c}$  of
$\ZZ/d\ZZ$ is always equivalent to the action on $K[x,y,z]_d$ of a
representation $M_{\alpha }$  of $\ZZ/d\ZZ$ for a suitable $\alpha $,
$2\le \alpha \le p^r-1$. Therefore, it is enough to classify the actions
on $K[x,y,z]_d$ of  representations $M_{\alpha }$  of $\ZZ/d\ZZ$.

\begin{thm}\label{power} Let $d=p^r$, where $p$ is a prime, and $r\geq
2$ an integer.
Then the number of classes of non-equivalent actions of representations
$M_a$ of $\ZZ/d\ZZ$ on $K[x,y,z]_d$ is  as follows:
\begin{enumerate}
\item[{(1)}] if $p=2$,  $r=2$, there is only one class; if $p=2$, $r>2$
there  are $(d/4)+1$ classes, of which three classes have two elements and the
remaining classes have $4$ elements;
\item[{(2)}] if $p=3$ or $p=6n-1$, then there are one class with three
elements, $(d-p)/2p$ classes with $4$ elements and $(dp-2d-3p)/6p$
classes with six elements;
\item[{(3)}] if $p=6n+1$, then there are one class with three elements,
one class with two elements, $(d-p)/2p$ classes with $4$ elements and
$(dp-2d-5p)/6p$ classes with six elements.
\end{enumerate}
\end{thm}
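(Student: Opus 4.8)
The plan is to reduce the counting of non-equivalent actions to a counting of equivalence classes on the set $\{2,3,\ldots,d-1\}$ under the equivalence relation generated by the two operations $a \mapsto d-a+1$ and $a \mapsto 1/a \pmod{d}$, exactly as was done in the prime case (Theorem \ref{thm:number}), using Proposition \ref{prop:equivmat}. The first step is to verify that Proposition \ref{prop:equivmat} remains valid when $d=p^r$: the congruence manipulations $m+an\equiv 0 \Leftrightarrow (d-m-n)+(d-a+1)n\equiv 0 \pmod{d}$ do not use primality, and $(e^a)^b=e$ corresponds to $b\equiv 1/a \pmod{d}$ whenever $\gcd(a,d)=1$. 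The subtlety absent in the prime case is that now not every $a$ with $2\le a\le d-1$ is a unit modulo $d$; the non-units are precisely the multiples of $p$, and for these $1/a$ is undefined. So I would first separate the count into the contribution of the $\varphi(d)=p^{r-1}(p-1)$ units and the contribution of the $d-1-\varphi(d)$ non-units.

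Next I would analyze the orbit structure on the units. The two generating involutions are $\iota_1: a\mapsto d-a+1$ and $\iota_2: a\mapsto 1/a$; both preserve the set of units coprime to $d$, and a generic orbit has six elements $a, 1/a, d-a+1, d-(1/a)+1, 1/(d-a+1), d-(1/(d-a+1))+1$, matching the cross-ratio/$j$-invariant description of Remark \ref{crossratio}. Smaller orbits arise from fixed points of $\iota_1$, of $\iota_2$, or of the composite maps. The fixed points of $\iota_1$ give $a=(d+1)/2$; of $\iota_2$ give $a^2\equiv 1$, i.e. $a\equiv \pm1$; and the equianharmonic locus $a^2-a+1\equiv 0 \pmod{d}$ produces the special two-element class. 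By Lemma \ref{lem:d}, the cyclotomic polynomial $\phi_6$ has roots modulo $p^r$ precisely when $p\equiv 1 \pmod 6$ (or $p=3$, where $\phi_6$ has the repeated behaviour I would check separately), which is exactly the trichotomy $p=6n\pm1$ and $p=3$ appearing in the statement. I would then do a Burnside-type count: the number of six-element, four-element, and smaller orbits is determined by how many of the $\varphi(d)$ units are fixed by each symmetry, and the stated formulas $(dp-2d-3p)/6p$ etc.\ should drop out after substituting $\varphi(d)=p^{r-1}(p-1)=\frac{d(p-1)}{p}$ and subtracting the degenerate orbits. The case $p=2$ must be handled entirely separately, since then $d$ is even, $(d+1)/2$ is not an integer, $\iota_1$ behaves differently on even residues, and there are extra small orbits; this accounts for the distinct answer in part (1), and I would verify the $r=2$ base case $d=4$ by hand.

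The main obstacle, and the heart of the argument, is the careful bookkeeping for the non-unit values $a=p, 2p, \ldots, (p^{r-1}-1)p$. For these $a$ the map $\iota_2$ is undefined, so the equivalence classes are generated by $\iota_1$ alone together with whatever coincidences arise among the invariant ideals $I_a$. I expect that for a non-unit $a$ the only available move is $a\mapsto d-a+1$, but I must check whether two different non-units can still yield the same ideal up to permutation through some coincidence not captured by Proposition \ref{prop:equivmat}, and conversely whether a non-unit action can ever coincide with a unit action (it cannot, since the number or type of invariant monomials differs). The claim implicit in the statement is that the non-unit values fall into the same four- and six-element pattern as the units once the reduction $M_{a,b,c}\rightsquigarrow M_\alpha$ is applied; I would need to justify that reduction explicitly for $d=p^r$, showing that dividing out the common $p$-power factor in $(a,b,c)$ reduces any non-unit case to a genuine $M_\alpha$ with $\alpha$ a unit modulo a smaller power of $p$, so that no non-unit class survives as a truly new type. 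This rescaling argument, and the resulting reconciliation of the counts, is where I would expect to spend the most care.

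Finally, I would assemble the three cases by tabulating, for each residue class of $p$ modulo $6$, the fixed-point counts of $\iota_1$, $\iota_2$, and the equianharmonic condition, then apply the orbit-counting identity to obtain the total number of classes together with the sizes $2$, $3$, $4$, $6$ as claimed. A sanity check against the prime-case Theorem \ref{thm:number} and against small examples such as $d=8, 9, 25$ would confirm the formulas before writing up the final count.
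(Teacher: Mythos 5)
Your overall skeleton matches the paper's: the classes are orbits of $\{2,\dots,d-1\}$ under $\iota_1\colon a\mapsto d-a+1$ and $\iota_2\colon a\mapsto 1/a$ (Proposition \ref{prop:equivmat}, whose proof indeed never uses primality of $d$), the three-element class is $\{2,\frac{d+1}{2},d-1\}$, the extra two-element class comes from $\phi_6$ via Lemma \ref{lem:d}, and $p=2$ is handled separately through the solutions of $a^2\equiv 1\pmod{2^r}$. But there is a genuine error in your treatment of the non-units, and it is precisely the point that produces the $(d-p)/2p$ four-element classes. You assert that a non-unit action can never coincide with a unit action ``since the number or type of invariant monomials differs.'' This is false: if $a=px$ is a multiple of $p$, then $d-a+1\equiv 1\pmod p$ is a \emph{unit}, and Proposition \ref{prop:equivmat} makes $M_{px}$ equivalent to $M_{d-px+1}$ (the ideals coincide after swapping $x$ and $y$, so in particular they have the same number of generators). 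The correct mechanism, which your proposal is missing, is: from the non-unit $px$ one passes via $\iota_1$ to the unit $d-px+1$, then via $\iota_2$ to its inverse $y$, then via $\iota_1$ to $d-y+1$, which one checks is again a multiple of $p$. So every four-element class contains exactly two multiples of $p$ and two units, and since there are $(d/p)-1$ nonzero multiples of $p$ in $\{2,\dots,d-1\}$, there are exactly $\bigl((d/p)-1\bigr)/2=(d-p)/(2p)$ such classes; the six-element count then follows by subtraction from $d-2$.

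Two further consequences of this error would derail your plan. First, the ``rescaling'' argument you propose for non-units (dividing out a common $p$-power in $(a,b,c)$) is not applicable here: the matrix $M_a=\mathrm{diag}(1,e,e^a)$ has $\gcd(1,a,d)=1$ automatically, so it always generates a cyclic group of order exactly $d$, and nothing reduces to a smaller power of $p$; that reduction concerns passing from general $M_{a,b,c}$ to some $M_\alpha$, which is not needed for this theorem. Second, a Burnside-type count restricted to the $\varphi(d)$ units cannot by itself yield the number of four-element classes, because the units in those classes account for only half of each orbit; you must count the orbits through the multiples of $p$, as above. With that correction the bookkeeping closes: $3$ (or $6$ for $p=2$) degenerate elements, $2$ more if $p\equiv 1\pmod 6$, $4\cdot\frac{d-p}{2p}$ in four-element classes, and the remainder divisible by $6$.
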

\begin{proof}
(1) If $p=2$, there are exactly three values of $a$, $2\leq a\leq d-1$,
such that $a^2\equiv 1 \pmod{d}$: $a=d-1$, $(d-2)/2$ and $(d+2)/2$. Each
of them gives rise to a class formed by   two elements $a, d-a+1$.
For any other value of  $a$, in the pair $(a, d-a+1)$ there is an even
and an odd number, which admits inverse $b$. So, the class is formed by
$a, b, d-a+1, d-b+1$.

Assume now that $p$ is odd. Then $a^2\equiv 1 \pmod{d}$ if and only if
$a=d-1$, which produces the class with three elements: $2$, $d-1$ and
$(d+1)/2$. From Lemma \ref{lem:d} it follows that the equation $a(d-a+1)\equiv 1 \pmod{d}$ has a
solution $a_0$ only if $p=6n+1$.
So we have a class with two elements $a_0, d-a_0+1$ only if $d=6n+1$.
Then for any multiple of $p$, $px$, we have a class with $4$ elements of the
form $px, d-px+1, y, d-y+1$, where $y$ is the inverse of $d-px+1$ modulo
$d$. It is easy to check that in this class there is also exactly one
other multiple of $p$ i.e. $d-y+1$. Finally, the remaining elements
distribute themselves in classes of $6$ elements of the form $a,b,d-a+1,
x, d-b+1, d-x+1$, where $b, x$ are the inverses $\pmod{d}$ of $a$ and
$d-a+1$, respectively. The thesis follows by a straightforward
computation, taking into account that the number of non-zero multiples
of $p$ in $\ZZ/d\ZZ$ is $(d/p)-1$.
\end{proof}

\section{Classification of GT-systems: The general case}\label{general}

We consider now the  case of a general integer $d$. We will restrict our attention
to actions represented
by matrices of the form $M_a$.

For seek of completeness we start recalling a couple of well known
results on arithmetic equations.

\begin{lem}\label{lem:first} Let $d=2^\alpha
p_1^{\alpha_1}p_2^{\alpha_2}\ldots p_r^{\alpha_r}$ be the prime
factorization of $d$, where   $p_i$ are distinct odd primes,
$r\geq 0$, $\alpha\geq 0$,  and $\alpha_i> 0$ for $1\le i\le r.$
The number of solutions of the equation $x^2\equiv 1 \pmod d$ is:
\begin{enumerate}
\item $2^r$ if $0\le \alpha \le 1$;
\item $2^{r+1}$ if $\alpha=2$;
\item $2^{r+2}$ if $\alpha\ge 3$.
\end{enumerate}
\end{lem}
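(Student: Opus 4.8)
The plan is to decouple the congruence $x^2\equiv 1 \pmod d$ prime-power by prime-power using the Chinese Remainder Theorem. Since the factors $2^\alpha, p_1^{\alpha_1},\ldots,p_r^{\alpha_r}$ are pairwise coprime, CRT gives a ring isomorphism $\ZZ/d\ZZ \cong \ZZ/2^\alpha\ZZ \times \prod_{i=1}^r \ZZ/p_i^{\alpha_i}\ZZ$, under which a solution of $x^2\equiv 1 \pmod d$ corresponds precisely to a tuple consisting of a solution of $x^2\equiv 1$ modulo each factor. Hence the total count is the product of the numbers of solutions modulo $2^\alpha$ and modulo each $p_i^{\alpha_i}$, and it suffices to count these local solutions.

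For each odd prime power $p_i^{\alpha_i}$, I would recall that the unit group $(\ZZ/p_i^{\alpha_i}\ZZ)^*$ is cyclic of order $\varphi(p_i^{\alpha_i})=p_i^{\alpha_i-1}(p_i-1)$, which is even because $p_i$ is odd. A cyclic group of even order contains exactly two elements whose order divides $2$, so $x^2\equiv 1 \pmod{p_i^{\alpha_i}}$ has exactly two solutions, namely $\pm 1$. This produces the common factor $2^r$ in all three cases, so everything reduces to the behaviour of the $2$-part.

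The cases $\alpha\le 2$ are immediate: for $\alpha=0$ the ring is trivial and there is one solution; for $\alpha=1$ the only class is $x\equiv 1 \pmod 2$, again one solution; for $\alpha=2$ a direct check shows both $x\equiv 1,3 \pmod 4$ work, giving two solutions. The only substantive case, and the main obstacle, is $\alpha\ge 3$, where $(\ZZ/2^\alpha\ZZ)^*$ fails to be cyclic and extra solutions beyond $\pm 1$ appear.

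For $\alpha\ge 3$ I would argue directly. Any solution $x$ is odd, so the condition reads $2^\alpha \mid (x-1)(x+1)$. Now $x-1$ and $x+1$ are consecutive even integers, so exactly one of them is divisible by $4$ while the other has $2$-adic valuation equal to $1$; therefore $2^\alpha$ divides the product only if one of the two factors is divisible by $2^{\alpha-1}$, i.e. $x\equiv \pm 1 \pmod{2^{\alpha-1}}$. Since $\alpha-1\ge 2$ the classes $\pm 1$ are distinct modulo $2^{\alpha-1}$, and each lifts to two classes modulo $2^\alpha$, yielding exactly the four solutions $x\equiv 1,\ 2^{\alpha-1}-1,\ 2^{\alpha-1}+1,\ 2^{\alpha}-1 \pmod{2^\alpha}$ (one verifies $(2^{\alpha-1}\pm 1)^2\equiv 1$ since $2\alpha-2\ge \alpha$). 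Equivalently one may invoke the structure theorem $(\ZZ/2^\alpha\ZZ)^*\cong \ZZ/2\ZZ\times\ZZ/2^{\alpha-2}\ZZ$, in which the number of elements of order dividing $2$ is $4$. Multiplying the local counts $1,1,2,4$ by the factor $2^r$ from the odd part then gives $2^r$, $2^{r+1}$, $2^{r+2}$ in the respective ranges $\alpha\le 1$, $\alpha=2$, $\alpha\ge 3$, as claimed.
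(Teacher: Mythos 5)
Your proof is correct and complete. Note that the paper does not actually prove this lemma: its entire ``proof'' is a citation to the OEIS entry A060594, so there is no argument in the text to compare against. What you supply is the standard self-contained derivation: the Chinese Remainder Theorem reduces the count to a product of local counts, the cyclicity of $(\ZZ/p_i^{\alpha_i}\ZZ)^*$ for odd $p_i$ gives exactly two square roots of unity at each odd prime power (contributing the uniform factor $2^r$), and the only delicate point --- the $2$-part for $\alpha\ge 3$ --- is handled correctly by the valuation argument on $(x-1)(x+1)$: since $x-1$ and $x+1$ are consecutive even integers, one has $2$-adic valuation exactly $1$, forcing $x\equiv\pm 1\pmod{2^{\alpha-1}}$, and each of these two classes lifts to two classes modulo $2^\alpha$, giving the four solutions $1$, $2^{\alpha-1}\pm 1$, $2^\alpha-1$. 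The small cases $\alpha\le 2$ are checked directly. Multiplying the local counts $1,1,2,4$ by $2^r$ yields exactly the three cases of the statement. Your argument is a legitimate replacement for the paper's external reference, and arguably an improvement, since it makes the lemma self-contained.
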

\begin{proof} See \cite{Enc}, a060594.
\end{proof}

\begin{lem}\label{lem:second} Let $d=p_0^{\alpha_0}
p_1^{\alpha_1}\ldots p_r^{\alpha_r}$ be the prime
factorization of $d$, where $r\geq 0$, $p_0=2$, $p_1=3$,  $p_i>3$ for $i\ge 2$,
 $\alpha_0, \alpha_1\geq 0$,  and $\alpha_i> 0$ for $2\le i\le r.$
The equation $x^2-x+1 \equiv 0 \pmod d$ is compatible if and
only if $\alpha_0=0$, $0\le \alpha_1 \le 1$ and $p_i\equiv 1 \pmod 6$ for any
$i=2,\ldots,r$, and in this case there are exactly $2^{r-1}$ solutions.
\end{lem}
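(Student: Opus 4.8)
We need to prove Lemma 5.5 (Lemma \ref{lem:second}): for $d=p_0^{\alpha_0}p_1^{\alpha_1}\cdots p_r^{\alpha_r}$ with $p_0=2, p_1=3, p_i>3$, the congruence $x^2-x+1\equiv 0\pmod d$ is solvable iff $\alpha_0=0$, $0\le\alpha_1\le 1$, and every $p_i\equiv 1\pmod 6$ for $i\ge 2$; and when solvable there are exactly $2^{r-1}$ solutions.

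**Strategy via CRT.** The plan is to reduce everything to prime-power moduli by the Chinese Remainder Theorem. Since the $p_i^{\alpha_i}$ are pairwise coprime, $x^2-x+1\equiv 0\pmod d$ is solvable iff it is solvable modulo each $p_i^{\alpha_i}$, and the total number of solutions is the product of the numbers of solutions modulo each prime power. So I would first count solutions of $x^2-x+1\equiv 0\pmod{p^\alpha}$ for each prime power separately, then multiply.

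**The prime-power count.** The key observation is that $x^2-x+1$ is the sixth cyclotomic polynomial $\phi_6(x)$, so a root mod $m$ is exactly a primitive sixth root of unity in $\ZZ/m\ZZ$; equivalently a root generates the cyclic subgroup of order $6$ inside $(\ZZ/m\ZZ)^*$. (One checks directly that any root is a unit, since $x(x-1)=-1$.) This is precisely the argument already used in Lemma \ref{lem:d}. I would treat the cases:

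\begin{itemize}
\item $p=2$: here $x^2-x+1$ takes values $1$ and $1$ at $x\equiv 0,1\pmod 2$, so there is no root mod $2$ and hence none mod $2^{\alpha_0}$ for $\alpha_0\ge 1$. Thus solvability forces $\alpha_0=0$.
\item $p=3$: mod $3$ the root is $x\equiv 2$ (a double root, since $\phi_6\equiv (x-2)^2\equiv (x+1)^2\pmod 3$), giving one solution mod $3$; but mod $9$ there is no solution, because $6\mid|(\ZZ/9\ZZ)^*|=6$ would be needed and indeed a primitive $6$th root exists mod $9$ — here I must be careful, so this is the step I expect to be delicate. The correct statement to prove is that $x^2-x+1\equiv 0\pmod{3^{\alpha_1}}$ has a solution iff $\alpha_1\le 1$, and that for $\alpha_1=1$ there is exactly one solution. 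The obstruction is that although $6\mid\varphi(9)$, the discriminant of $\phi_6$ is $-3$, so Hensel's lemma fails to lift the mod-$3$ root (the repeated root mod $3$ does not lift to mod $9$); one verifies directly that $x^2-x+1\equiv 0\pmod 9$ has no solution. This ramification at $3$ is the heart of the lemma and the main obstacle.
\item $p_i>3$: here $\gcd(p_i, \mathrm{disc}\,\phi_6)=\gcd(p_i,-3)=1$, so any simple root mod $p_i$ lifts uniquely by Hensel to each $p_i^{\alpha_i}$. A root mod $p_i$ exists iff the cyclic group $(\ZZ/p_i\ZZ)^*$ of order $p_i-1$ contains an element of order $6$, i.e. iff $6\mid p_i-1$, i.e. $p_i\equiv 1\pmod 6$; and then there are exactly two such elements (the two primitive $6$th roots of unity), hence exactly two roots mod $p_i^{\alpha_i}$.
\end{itemize}

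**Assembling the count.** Combining: solvability holds iff $\alpha_0=0$, $\alpha_1\le 1$, and $p_i\equiv 1\pmod 6$ for all $i\ge 2$. Under these conditions, the factor from $2$ is absent, the factor from $3$ contributes $1$ (whether $\alpha_1=0$ or $1$), and each of the $r-1$ primes $p_2,\dots,p_r$ contributes a factor of $2$. By CRT the total number of solutions is $\prod_{i\ge 2} 2 = 2^{r-1}$. Here I would note the indexing convention in the statement forces $r\ge 1$ whenever the congruence is compatible and $d>1$, so the $r-1$ primes $p_2,\ldots,p_r$ are exactly the odd primes exceeding $3$ dividing $d$, giving the claimed $2^{r-1}$. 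The main work, as noted, is the careful verification at the prime $3$ that the mod-$3$ solution fails to lift to mod $9$; everything else is a clean Hensel-plus-CRT bookkeeping that mirrors Lemma \ref{lem:d}.
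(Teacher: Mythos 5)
Your proof is correct and follows essentially the same route as the paper's: reduce by the Chinese Remainder Theorem to prime-power moduli, invoke Lemma \ref{lem:d} for primes $p\ge 5$, treat $p=2$ and $p=3$ separately, and multiply the local counts. The only difference is that you supply the details the paper merely asserts, in particular the direct verification that $x^2-x+1\equiv 0 \pmod 9$ has no solution and the Hensel lifting for $p\ge 5$.
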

\begin{proof} First of all we recall from Lemma \ref{lem:d} that for any prime integer $p\ge
5$, $x^2-x+1 \equiv 0 \pmod{p^k}$ is compatible  if and only if $p\equiv 1
\pmod 6$. Moreover, in this case we have two solutions.

For $p=2$, there is no solution of the equation $x^2-x+1 \equiv 0 \pmod{2^a}$ and for $p=3$  there is a solution of the equation $x^2-x+1
\equiv 0 \pmod{3^b}$ if and only if $b=1$ and in this case there is
only one solution.

Putting altogether, we obtain the thesis.
\end{proof}

We want to count now the number of non-equivalent representations $M_a$
of $\ZZ/d\ZZ$ on $K[x,y,z]_d$.
\begin{lem} \label{descriptionclasses} An equivalence class of
representations $M_a$ of $\ZZ/d\ZZ$ on $K[x,y,z]_d$ can have $2, 3, 4$
or $6$ elements. Precisely:
\begin{itemize}
\item an equivalence  class with two elements is formed by $a, d-a+1$,
where one of the following happens:

(i) $a^2\equiv 1$, and $d-a+1$ is non invertible in $\ZZ/d\ZZ$;

(ii) $a(d-a+1)\equiv 1 \pmod d$;

(iii) $a, d-a+1$ are both non invertible in $\ZZ/d\ZZ$;
\item an equivalence class with three elements is formed by $2, d-1,
\frac{d+1}{2}$; such class  exists if and only if $d$ is odd;
\item an equivalence class with four elements is formed by $a, d-a+1, y,
d-y+1$, where $y$ is the inverse of $d-a+1 \pmod d$, and $a, d-y+1$ are
non invertible in $\ZZ/d\ZZ$;
\item an equivalence class with six elements is formed by $a,b,c, d-a+1,
d-b+1, d-c+1$ where $b=1/a$, $c=1/(d-a+1)$; all six elements are
invertible in $\ZZ/d\ZZ$.
\end{itemize}
\end{lem}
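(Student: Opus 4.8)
The plan is to read off the equivalence relation from Proposition \ref{prop:equivmat}: for $2\le a\le d-1$, the class of $a$ is the orbit under the two operations $\sigma(a)=d-a+1$, which is defined for every $a$, and $\tau(a)=1/a \pmod d$, which is defined only when $a$ is invertible modulo $d$. Both are involutions, and as recalled in Remark \ref{crossratio} they are exactly the generators of the anharmonic (cross-ratio) group $\cong S_3$; whenever $\tau$ is defined throughout the orbit, the orbit is the full set of six cross-ratio values
$$\{a,\ d-a+1,\ b,\ d-b+1,\ c,\ d-c+1\},\qquad b=\tfrac1a,\ c=\tfrac1{d-a+1}\pmod d.$$
I would therefore organize the proof according to the invertibility of the pair $(a,\,d-a+1)$, since this is precisely what controls how much of this hexagon is actually defined.

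First, suppose both $a$ and $d-a+1$ are invertible modulo $d$. Then all six values above are units (each is built from $a$ and $1-a\equiv d-a+1$ by inversion), so $\tau$ is everywhere defined and the orbit is the whole hexagon, of size $6$: this is the six-element class, all of whose members are invertible. The orbit drops below six exactly when $a$ is fixed by a non-trivial element of the anharmonic group. A reflection fixes $a$ only at the harmonic values $a\in\{2,\,d-1,\,\frac{d+1}{2}\}$; here I would use that $a^2\equiv 1$ together with $d-a+1$ invertible forces $a\equiv d-1$, via the Chinese Remainder Theorem description of square roots of $1$ (Lemma \ref{lem:first}), and that $\sigma\tau\sigma(a)=a$ forces $a\equiv 2$. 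This gives the three-element class $\{2,d-1,\frac{d+1}{2}\}$, which requires $\frac{d+1}{2}\in\ZZ$, i.e. $d$ odd. A rotation fixes $a$ only at the equianharmonic values, i.e. when $a(d-a+1)\equiv 1\pmod d$, giving the two-element class (ii). No further coincidences occur, so in the both-invertible case only the sizes $6,3,2$ appear.

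Next, suppose exactly one of $a,\,d-a+1$ is invertible; after possibly applying $\sigma$ I may assume $a$ is invertible and $d-a+1$ is not. Then $\tau$ is undefined at $d-a+1$, and also at $d-b+1=1-\tfrac1a$, whose invertibility coincides with that of $1-a=d-a+1$, hence which is again a non-unit; so the only defined values are $\{a,\ d-a+1,\ b,\ d-b+1\}$ with $b=\tfrac1a$. If $a^2\equiv1$ then $b=a$ and $d-b+1=d-a+1$, so the orbit collapses to $\{a,\,d-a+1\}$, which is exactly the two-element class (i). If $a^2\not\equiv1$ the four values are distinct — any coincidence would force $a$ harmonic or equianharmonic, contradicting that $d-a+1$ is a non-unit — giving the four-element class $\{a,d-a+1,y,d-y+1\}$ with $y=1/(d-a+1)$, in which (after relabelling so that the non-invertible generator is called $a$, as in the statement) $a$ and $d-y+1$ are the non-units. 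Finally, if neither $a$ nor $d-a+1$ is invertible, $\tau$ is defined nowhere on the pair and the orbit is just $\{a,\,d-a+1\}$, the two-element class (iii). Sizes $1$ and $5$ cannot arise — size $1$ would require $((d+1)/2)^2\equiv1$, i.e. $d\mid 3$, and size $5$ is excluded by the enumeration above — so this exhausts all possibilities.

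The main obstacle is the interaction between the two sources of degeneracy: the coincidences coming from the cross-ratio group (harmonic and equianharmonic $a$) and the truncation of the orbit caused by non-invertibility of some of the six values. The delicate point is to show that these do not overlap in unexpected ways: concretely, that a non-trivial square root of unity $a\neq d-1$ always has $d-a+1$ non-invertible (so that it produces the small class (i) and not a spurious all-invertible four-element class), and conversely that harmonic and equianharmonic $a$ force both $a$ and $d-a+1$ to be units. Both facts rest on the Chinese Remainder Theorem descriptions of the solutions of $x^2\equiv 1$ and of $x^2-x+1\equiv 0$ modulo $d$ (Lemmas \ref{lem:first} and \ref{lem:second}); once these are in hand, the case analysis above is routine bookkeeping, and the only remaining care is to match the labelling to the four described class types.
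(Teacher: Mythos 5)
Your proof is correct and follows essentially the same route as the paper's: both generate each class from Proposition \ref{prop:equivmat} via the two involutions $a\mapsto d-a+1$ and $a\mapsto 1/a$, split into cases according to the invertibility of $a$ and $d-a+1$, and use the Chinese Remainder Theorem description of square roots of unity to show that a nontrivial solution of $a^2\equiv 1$ forces $d-a+1$ to be a non-unit. The printed proof is terser (it defers the four- and six-element cases to the discussion in Theorem \ref{power}), so your explicit anharmonic-group/orbit-stabilizer bookkeeping, including the verification that no unexpected coincidences occur among the four partial values, is if anything more complete than the paper's; the only caveat is the degenerate value $d=3$, where the ``three-element'' class collapses to the singleton $\{2\}$, a case the paper silently ignores as well.
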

\begin{proof}  We recall, from Proposition \ref{prop:equivmat}, that
the  class containing an element $a$   always contains also $d-a+1$,
and, if $a$ is invertible, also $1/a$.

 So to have a class with only two elements, one of the following three
possibilities must occur: $1/a\equiv a$, or $1/a\equiv d-a+1$, or $a$
and $d-a+1$ are both non invertible. In the first case, it follows that the
two integers $d$ and $d-a+1$ are not coprime. Indeed if $1/a\equiv a$
in $\ZZ/d\ZZ$, there exists $k\in \ZZ$ such that $a^2-1=(a-1)(a+1)=kd$.
So each prime divisor of $d$ divides either $a-1$ or $a+1$, and at least
one divides $a-1$. Therefore $d-(a-1)$ is not coprime with $d$, this
gives case (i). In the second case and third case, it is clear that the
class does not contain any other element.  For the class with three
elements see Corollary \ref{cor:equivmat}. The shape of the classes with
four or six elements are analogous to those discussed in Theorem
\ref{power}.
\end{proof}

Let us denote by $N_i$ the number of classes with $i$ elements, and
$N_{21},  N_{22}, N_{23}$ respectively the numbers of classes with two
elements of type (i),(ii),(iii). Our next goal is to compute them.
\begin{prop}\label{N2}
Let $d=p_0^{\alpha_0}
p_1^{\alpha_1}\ldots p_r^{\alpha_r}$ be the prime
factorization of $d$, where $r\geq 0$, $p_0=2$, $p_1=3$,  $p_i>3$ for $i\ge 2$,
 $\alpha_0, \alpha_1\geq 0$,  and $\alpha_i> 0$ for $2\le i\le r.$ It holds:
\begin{itemize}
\item[(1)] $N_{21}=\begin{cases}  2^r-2 & \text{ if }  \alpha_0 =0;\\ 2^r-1 & \text{ if }  \alpha_0 = 1;\\
2^{r+1}-1 & \text{ if } \alpha_0=2;\\
2^{r+2}-1 & \text{ if }\alpha_0\ge 3.
\end{cases}$
\vskip 2mm
\item[(2)]  $N_{22}=\begin{cases}  2^r & \text{ if } \alpha_0=0, 0\le
\alpha_1 \le 1 \text{ and } p_i\equiv 1 \pmod 6 \text{ for any } i=2,\ldots ,r;
\\
0 & \text{ otherwise.}
\end{cases}$
\vskip 2mm
\item[(3)] If $\alpha_0=0$, then $N_{23}=\sum_{k=2}^r (-1)^{k}(2^{k-1}-1)\sum_{0\le
i_1<\cdots<i_{k}\le r}\frac{d}{p_{i_1}\cdots p_{i_k}}$. If $\alpha_0>0$, then
the expression is analogous but the sum goes from $2$ to $r+1$.
\end{itemize}
In addition, $N_2=N_{21}+N_{22}+N_{23}$.
\end{prop}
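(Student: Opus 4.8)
The plan is to count the three kinds of two-element classes separately, using the trichotomy of Lemma \ref{descriptionclasses}. A two-element class is of exactly one of the types (i), (ii), (iii): these are distinguished by the invertibility pattern of the pair $a, d-a+1$ together with the relations forced by Proposition \ref{prop:equivmat} (if $a$ is invertible the class also contains $1/a$, so a two-element invertible class forces either $1/a\equiv a$ or $1/a\equiv d-a+1$). Hence the three types partition the set of two-element classes and the identity $N_2=N_{21}+N_{22}+N_{23}$ is immediate. It remains to evaluate the three summands, and in each case the idea is to pass from classes to the solution set of a congruence modulo $d$ and then divide by the fibre size of this correspondence.

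For $N_{21}$ I would set up a bijection between type (i) classes and admissible solutions of $x^2\equiv 1\pmod d$. If $a^2\equiv 1$ then $a$ is invertible while $d-a+1\equiv 1-a$ is not: from $(a-1)(a+1)\equiv 0$ one checks that the only solution making $d-a+1$ invertible is $a=d-1$, and this places $a$ in the harmonic three-element class $\{2,\tfrac{d+1}{2},d-1\}$ precisely when $d$ is odd (since then $d-a+1=2$ is a unit). Conversely each type (i) class contains a unique solution of $x^2\equiv 1$, namely its invertible element. Thus $N_{21}$ equals the number of solutions of $x^2\equiv 1\pmod d$ in the range $2\le a\le d-1$, minus the solution $a=d-1$ which must be discarded when $d$ is odd. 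Feeding in the solution count $\nu$ of Lemma \ref{lem:first} and remembering that $a=1$ is always excluded gives $N_{21}=\nu-2$ for $d$ odd and $N_{21}=\nu-1$ for $d$ even, which reproduces the four cases according to the exponent $\alpha_0$ of $2$.

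For $N_{22}$, a type (ii) class is characterised by $a(d-a+1)\equiv 1$, i.e.\ $a^2-a+1\equiv 0\pmod d$; moreover $d-a+1\equiv 1-a$ satisfies the same quadratic, so the two elements of such a class are exactly a pair of roots of $x^2-x+1$. Since $\tfrac{d+1}{2}\equiv \tfrac12$ is never a root for $d\ge 5$, the fixed point of $a\mapsto d-a+1$ is not a root and the two roots are distinct; hence $N_{22}$ is half the number of solutions of $x^2-x+1\equiv 0\pmod d$ furnished by Lemma \ref{lem:second}, which vanishes unless the compatibility conditions $\alpha_0=0$, $\alpha_1\le 1$ and $p_i\equiv 1\pmod 6$ hold.

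The bulk of the computation is $N_{23}$. Here both $a$ and $d-a+1$ are non-invertible, so neither can be inverted and the class is the single pair $\{a,d-a+1\}$; thus $N_{23}=\tfrac12\,T$ with $T=\#\{a\bmod d:\gcd(a,d)>1\text{ and }\gcd(a-1,d)>1\}$. There is no fixed point of $a\mapsto d-a+1$ to correct for, because the only candidate $\tfrac{d+1}{2}\equiv\tfrac12$ is always a unit. To evaluate $T$ I would run inclusion--exclusion on the conditions $\gcd(a,d)=1$ and $\gcd(a-1,d)=1$, using that for a prime $p\mid d$ the events $p\mid a$ and $p\mid a-1$ are mutually exclusive; this yields $T=d-2\varphi(d)+\prod_{p\mid d}p^{\beta_p-1}(p-2)$, where $\beta_p$ is the exponent of $p$ in $d$. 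Expanding each of the three terms over the square-free divisors of $d$, the coefficient of $d/(p_{i_1}\cdots p_{i_k})$ collapses to $2(-1)^k(2^{k-1}-1)$, which is $0$ for $k\le 1$; dividing by $2$ then gives exactly the stated alternating sum starting at $k=2$. The only delicate point, and the main obstacle, is verifying that the boundary residues ($a=0,1$, the fixed point, and the anomalous prime $p=3$ in the $x^2-x+1$ analysis) contribute precisely as the clean inclusion--exclusion predicts, so that $T$ counts the integers $2\le a\le d-1$ with no spurious terms.
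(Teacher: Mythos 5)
Your proposal is correct, and for parts (1) and (2) it coincides with the paper's (very terse) proof: both reduce $N_{21}$ and $N_{22}$ to the solution counts of $x^2\equiv 1$ and $x^2-x+1\equiv 0$ modulo $d$ supplied by Lemmas \ref{lem:first} and \ref{lem:second}. You are in fact more careful than the paper here: you justify why exactly one further solution, $a=d-1$, must be discarded when $d$ is odd (it sits in the three-element class), which is what turns the paper's ``subtract $1$'' into the $-2$ of the odd case, and you check that each type (i) class contains exactly one root of $x^2\equiv 1$, so the correspondence is a bijection. For part (3) your route is genuinely different. The paper fixes a set of $k$ prime divisors split into two nonempty blocks, counts the pairs $(a,d-a+1)$ with $a$ divisible by one block and $d-a+1$ by the other --- establishing the count $d/(p_{i_1}\cdots p_{i_k})$ by explicitly parametrizing the positive solutions of $p_ix+p_jy=d+1$ --- and then runs an inclusion--exclusion over $k$ in which the coefficient $(2^k-2)/2$ appears as the number of two-block partitions. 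You instead count the residues with $\gcd(a,d)>1$ and $\gcd(a-1,d)>1$ by complementary counting, $T=d-2\varphi(d)+\prod_{p\mid d}p^{\beta_p-1}(p-2)$, expand the three multiplicative terms over squarefree divisors so that the coefficient $(-2)^k-2(-1)^k=(-1)^k(2^k-2)$ drops out formally (vanishing for $k\le 1$), and divide by $2$ after verifying that $a\mapsto d-a+1$ has no fixed point on this set; this is cleaner, avoids the Diophantine lemma and the double-counting correction, and agrees with all three numerical examples in the paper ($N_{23}=80,9,57$ for $d=825,42,210$). One caveat worth recording: your (correct) identity ``$N_{22}$ equals half the number of roots of $x^2-x+1$'' yields $2^{r-2}$ in the Proposition's indexing (equivalently $2^{s-1}$ with $s$ the number of primes $>3$ dividing $d$), not the stated $2^{r}$ --- for $d=7$ there is the single class $(3,5)$ --- and similarly the $r$ appearing in the formula for $N_{21}$ must be read as in Lemma \ref{lem:first}, i.e.\ counting only the odd primes actually dividing $d$. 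These are slips in the statement's bookkeeping, not in your argument.
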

\begin{proof} (1) is a consequence of Lemma \ref{lem:first}, but we have to subtract $1$ because we do not want to consider the solution $1$. (2) is a consequence of Lemma \ref{lem:second}. We prove now (3). Let $p_i,p_j$ be two prime factors of $d$. We claim that
the number of pairs
$(a,d-a+1)$, where $a$ is divisible by $p_i$ and $d-a+1$ by $p_j$, is
$d/(p_ip_j)$. Indeed, the pair $(a=p_ix, d-a+1=p_jy)$ is as we want if
we can  write $d+1$ as $p_ix+p_jy$, with $x, y$ positive integers.
We can write $p_i\lambda-p_j\mu=1$, and we choose $\lambda$ minimum
$>0$, hence $\mu>0$. So the pairs $x,y$ that work are obtained in this way:

$x=(1+d)\lambda-mp_j>0$,

$y=-(1+d)\mu+mp_i>0$,

\noindent with $m>0$. Thus, we must count the number of positive $m$ such that the
two inequalities are satisfied.
We must have
$(1+d)\mu/p_i < m < (1+d)\lambda/p_j$,
which is equivalent to
$$p_j(d/p_ip_j)\mu+(\mu/p_i) < m < p_i(d/p_ip_j)\lambda + (\lambda/p_j).$$
The number of $m$'s satisfying this inequality is
$p_i(d/p_ip_j)\lambda-p_j(d/p_ip_j)\mu=d/p_ip_j$, which proves the claim.

So  the number of these pairs, as $i,j$ vary, is $\sum_{0\leq i<j\leq
r}d/p_ip_j$.

But if $r \geq 2$, some of the pairs have been counted twice. So
we have to subtract the contribution of these pairs. For any choice of
three prime factors $p_i, p_j, p_h$, the contribution is
$3d/(p_ip_jp_h)$, because it is equal to the number of pairs $(a,
d-a+1)$ such that $a$ is divisible by one among $p_i, p_j, p_h$, and
$d-a+1$ by the product of the other two. We continue in this way, taking
the contribution of $2,3,\ldots$ prime factors of $d$ with alternate
signs. The coefficient in the contribution of $k$ primes is the number
of partitions of $k$ as sum of two positive numbers, i.e.
$\sum_{i=1}^{\lfloor k/2 \rfloor}{k\choose i}=(2^k-2)/2$.
\end{proof}

\begin{rem}\label{N3} \rm For any $d=p_0^{\alpha_0} p_1^{\alpha_1
}p_2^{\alpha_2}\cdots p_r^{\alpha_r}$   with $p_0=2$, $p_1=3$, $p_i>3$ for $i\ge2$,
$r\geq 0$, $\alpha_0, \alpha_1 \ge 0$ and $\alpha_i> 0$ for $i\ge 2$, we have
$N_3= \begin{cases}  1 & \text{ if } \alpha_0=0;
\\
0 & \text{ otherwise.}
\end{cases}$
\end{rem}

\begin{prop} \label{N4} Let $d$ be an integer.
Let
$\varphi(d)$ be the Euler's function that counts the positive integers
up to $d$ that are relatively prime to $d$. Then the following  holds:
\begin{itemize}
\item[(1)] \begin{equation*}N_4=\frac{ d-1-\varphi(d)-N_{21}-2N_{23}}{2};\end{equation*}
\item[(2)] \begin{equation*} N_6=\frac{d-2-2N_2-3N_3-4N_4}{6}.\end{equation*}
\end{itemize}
\end{prop}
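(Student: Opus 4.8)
The plan is to prove both formulas by double-counting the set of admissible exponents $\{2,3,\ldots,d-1\}$, which has exactly $d-2$ elements and is partitioned into the equivalence classes catalogued in Lemma \ref{descriptionclasses}. Since both the involution $a\mapsto d-a+1$ and the inversion $a\mapsto 1/a$ (when $a$ is invertible) send this set to itself, every element lies in precisely one class, of size $2$, $3$, $4$ or $6$. The two formulas will then emerge from counting this set in two different ways.

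For part (2) I would simply sum the class sizes: a class of size $i$ contributes $i$ elements, so
\begin{equation*}
2N_2+3N_3+4N_4+6N_6 = d-2.
\end{equation*}
Solving for $N_6$ gives the stated expression, once we recall $N_2=N_{21}+N_{22}+N_{23}$ from Proposition \ref{N2} and the value of $N_3$ from Remark \ref{N3}. This half is immediate.

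For part (1) the idea is instead to count the \emph{non-invertible} residues lying in $\{2,\ldots,d-1\}$. On one hand, the number of units in $\{1,\ldots,d-1\}$ is $\varphi(d)$ (the residue $d$ is never a unit for $d\geq 2$), and since $1$ is a unit, the number of non-units in $\{2,\ldots,d-1\}$ equals $d-1-\varphi(d)$. On the other hand, reading invertibility off Lemma \ref{descriptionclasses} shows that a non-unit can occur only in a two-element class of type (i), which contains exactly one non-unit; in a two-element class of type (iii), which consists of two non-units; or in a four-element class $\{a,d-a+1,y,d-y+1\}$, whose two non-units are precisely $a$ and $d-y+1$. The three-element class $\{2,d-1,\frac{d+1}{2}\}$, the type (ii) two-element classes, and all six-element classes consist entirely of units. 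Equating the two counts gives
\begin{equation*}
d-1-\varphi(d)=N_{21}+2N_{23}+2N_4,
\end{equation*}
and solving for $N_4$ yields formula (1).

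The step requiring the most care is this exhaustive bookkeeping of units versus non-units across the class types of Lemma \ref{descriptionclasses}: one must verify that every non-unit is accounted for with the correct multiplicity and that none can hide in a three- or six-element class. Once the invertibility pattern of each class type is confirmed, both identities are purely linear, and part (2) follows from part (1) together with the total count $d-2$.
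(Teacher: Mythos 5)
Your proof is correct and follows essentially the same route as the paper's: part (1) counts the non-invertible residues in $\{2,\ldots,d-1\}$ (there are $d-1-\varphi(d)$ of them) and distributes them over the class types of Lemma \ref{descriptionclasses}, with one non-unit per type (i) class, two per type (iii) class, and two per four-element class; part (2) is the total count $2N_2+3N_3+4N_4+6N_6=d-2$. The paper states this more tersely, but the bookkeeping you spell out is exactly the content of its argument.
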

\begin{proof} (1) By Lemma \ref{descriptionclasses} the equivalence
classes with four elements are formed by $a, d-a+1, y, d-y+1$, where $y$
is the inverse of $d-a+1 \pmod d$, and $a, d-y+1$ are non invertible in
$\ZZ/d\ZZ$. So, we have to compute the number of non-invertible elements
not involved in  the equivalence classes with 2 or 3 elements. Since the
Euler function  $\varphi(d)$ counts the invertible elements in
$\ZZ/d\ZZ$. i.e. the cardinality of $(\ZZ/d\ZZ)^*$, we get the required formula for $N_4.$

(2) It immediately follows from the fact that
$2N_2+3N_3+4N_4+6N_6=d-2$.\end{proof}

Putting altogether we have

\begin{thm} \label{number} Let $d=p_0^{\alpha_0}
p_1^{\alpha_1}\cdots p_r^{\alpha_r}$ be the prime
factorization of $d$, where $r\geq 0$, $p_0=2$, $p_1=3$,  $p_i>3$ for $i\ge 2$,
 $\alpha_0, \alpha_1\geq 0$,  and $\alpha_i> 0$ for $2\le i\le r.$
The number of classes of non-equivalent actions of representations $M_a$
of $\ZZ/d\ZZ$ on $K[x,y,z]_d$ is  $N_2+N_3+N_4+N_6$.
\end{thm}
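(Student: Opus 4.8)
The plan is to treat this statement as the bookkeeping step that assembles the structural analysis carried out in Proposition \ref{prop:equivmat} and Lemma \ref{descriptionclasses}. The first thing I would record is that the equivalence of Definition \ref{def:eq}, restricted to the admissible indices $2\le a\le d-1$, is an honest equivalence relation on the finite set $\{2,3,\ldots,d-1\}$: by Proposition \ref{prop:equivmat} its classes are the orbits of the group generated by the two involutions $a\mapsto d-a+1$ and (where $a$ is invertible modulo $d$) $a\mapsto a^{-1}\pmod d$. These orbits form a partition of this set of $d-2$ elements, so the quantity we must compute, namely the number of non-equivalent actions, is exactly the number of blocks of this partition.

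The key input is Lemma \ref{descriptionclasses}, which I would invoke essentially as a black box: it asserts that every block has cardinality $2$, $3$, $4$ or $6$, and it lists the precise shape of each type. Granting this, the number of blocks is the sum of the numbers of blocks of each allowed size. Writing $N_2,N_3,N_4,N_6$ for these counts, with $N_2=N_{21}+N_{22}+N_{23}$ according to the three two-element subtypes isolated in Lemma \ref{descriptionclasses}, the total number of classes is therefore $N_2+N_3+N_4+N_6$, which is exactly the assertion. The explicit closed forms for the summands are supplied by Proposition \ref{N2}, Remark \ref{N3} and Proposition \ref{N4}, so no further computation is needed.

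A useful sanity check, which I would include to confirm that the enumeration is complete, is the identity $2N_2+3N_3+4N_4+6N_6=d-2$: each block of size $i$ contributes $i$ of the $d-2$ indices, so summing cardinalities over all blocks must recover $|\{2,\ldots,d-1\}|=d-2$. This is precisely the relation already used in the proof of Proposition \ref{N4}(2), and it certifies that no index has been omitted or double-counted.

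The only genuinely substantive point behind the theorem is the claim, established upstream in Lemma \ref{descriptionclasses}, that no block can have size $1$ or $5$ and none can exceed $6$. The mechanism is the $S_3$ structure indicated in Remark \ref{crossratio}: on the invertible indices the two involutions $a\mapsto d-a+1$ and $a\mapsto a^{-1}$ generate an action whose orbits have size dividing $6$, the $j$-invariant of the associated cross-ratio being constant along each orbit, with the orbits of size $6$, $3$ and $2$ corresponding to the generic, harmonic and equianharmonic cross-ratios. The reason blocks of size $4$ and the two-element subtypes (i) and (iii) occur at all is that inversion is only \emph{partially} defined, i.e.\ some indices are non-invertible; this is exactly what Lemma \ref{descriptionclasses} takes into account. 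I would therefore expect the present theorem to require no new argument beyond citing that lemma together with the three counting results; the real work lies in the preceding sections, and the main (already surmounted) obstacle is the careful case analysis ruling out any orbit size outside $\{2,3,4,6\}$.
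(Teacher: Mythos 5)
Your proposal is correct and follows the paper's own route: the paper also proves this theorem by simply citing Lemma \ref{descriptionclasses}, with the class sizes $2,3,4,6$ partitioning the $d-2$ indices and the counts supplied by Proposition \ref{N2}, Remark \ref{N3} and Proposition \ref{N4}. Your added consistency check $2N_2+3N_3+4N_4+6N_6=d-2$ is the same identity the paper uses to derive $N_6$, so nothing is missing.
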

\begin{proof} It immediately follows from Lemma \ref{descriptionclasses}.
\end{proof}

Let us illustrate  our result with a couple of concrete examples.

\begin{ex}\rm  (i) If $d=825=3\cdot 5^2\cdot 11$, we get the following numbers:
$N_3=1,
N_2=N_{21}+N_{22}+N_{23}=6+0+80=86,
N_4=129,
N_6= 22.$

(ii) If $d=42=2\cdot 3\cdot 7$, then $N_3=0, N_2=N_{21}+N_{22}+N_{23}=3+0+9=12, N_4=4, N_6=0$.

(iii)
If $d=210=2\cdot 3\cdot 5\cdot 7$, then $N_3=0, N_2=N_{21}+N_{22}+N_{23}=7+57=64, N_4=20, N_6=0$.
\end{ex}

In Theorem \ref{upper} we have obtained un upper bound, valid for any
$d$, on the number of  monomials of degree $d$ invariant under the
action of $M_a$. Next Proposition gives a lower bound on the same
number.

\begin{prop}\label{lower} Let $d=p_1^{\alpha_1}\cdots p_r^{\alpha_r}$,
where $p_1<p_2<\cdots<p_r$ are primes, $\alpha_i>0$ for all $i$ and
$r\geq 2$.
Let $I_a$ be the ideal generated by all polynomials of degree $d$
invariant under the  action of $M_a$, $2\leq a\leq d-1$.  Then, $I_a$ is
a monomial minimal GT-system and  $\mu(I_a)\geq \lfloor d/2\rfloor+3$.
\end{prop}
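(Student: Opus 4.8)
The plan is to prove the lower bound $\mu(I_a) \geq \lfloor d/2 \rfloor + 3$ by producing an explicit collection of at least $\lfloor d/2 \rfloor + 3$ distinct monomials of degree $d$ that are invariant under the action of $M_a$. First I would recall, as in the proof of Theorem \ref{upper}, that a monomial $x^{d-m-n}y^m z^n$ is $M_a$-invariant precisely when $m + an \equiv 0 \pmod{d}$, so the problem reduces to counting solution pairs $(m,n)$ with $m,n \geq 0$ and $m+n \leq d$. The three pairs $(d,0)$, $(0,d)$, $(0,0)$ always solve this and give $x^d, y^d, z^d$; these account for the constant $+3$. The core of the argument is therefore to exhibit at least $\lfloor d/2 \rfloor$ further solutions with $m,n > 0$ and $m+n < d$.

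The key structural step is to mimic the pairing argument from the proof of Theorem \ref{numberinv}. As in Remark \ref{def:b}, since $\gcd(a,d)=1$ one sets $n_1 \equiv 1/a$ (equivalently the unique $0 < n_1 < d$ with $a n_1 \equiv -1$ up to sign conventions), and for each $m$ defines $n_m :\equiv n_1 m \pmod{d}$, so that each $(m, n_m)$ automatically satisfies the congruence $m + a n_m \equiv 0$. The invertibility of $a$ guarantees the map $m \mapsto n_m$ is a bijection of $\{1,\dots,d-1\}$ onto itself. The essential observation is then the involution $m \leftrightarrow d-m$, under which $n_{d-m} = d - n_m$; hence for each complementary pair $\{m, d-m\}$ one has $(m + n_m) + ((d-m) + (d - n_m)) = 2d$, which forces at least one of the two sums $m + n_m$ and $(d-m)+(d-n_m)$ to be $\leq d$. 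Running over the $\lfloor (d-1)/2 \rfloor$ such complementary pairs yields at least that many valid monomials; adding $x^d, y^d, z^d$ gives the stated lower bound after checking the parity bookkeeping for $d$ even versus odd.

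The main obstacle, and the place where the argument is genuinely more delicate than in Theorem \ref{numberinv}, is that $d$ is no longer prime, so two subtleties arise. First, when $d$ is even the \emph{self-paired} index $m = d/2$ (and correspondingly the boundary case $m + n_m = d$, which does not give a valid interior monomial) must be handled separately, and this is exactly what separates the bound $\lfloor d/2 \rfloor$ from $\lceil d/2 \rceil$; I would argue that the floor absorbs this loss of at most one monomial. Second, and more importantly, because $a$ need not be such that $n_m$ avoids the boundary $m + n_m = d$ for several values of $m$, one must verify that the pairing never \emph{simultaneously} sends both members of a complementary pair to the boundary; here the relation $(m+n_m) + ((d-m)+(d-n_m)) = 2d$ is decisive, since equality on both sides would force each sum to equal exactly $d$, and I would rule this out or account for it in the count.

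Finally, the assertions that $I_a$ is a monomial GT-system and that it is \emph{minimal} follow from results already established: $I_a$ is monomial by Theorem \ref{galoismonomial}, it fails WLP and is a GT-system by Theorem \ref{upper} (whose hypothesis $\mu(I_a) \leq d+1$ holds), and its minimality is exactly the content of Theorem \ref{minimal} applied to the representation $M_a$. Thus the only new content of the proposition is the lower bound on $\mu(I_a)$, and once the pairing count above is carried out carefully the proof is complete.
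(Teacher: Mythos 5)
Your reduction of the minimality and GT-system claims to Theorems \ref{galoismonomial}, \ref{upper} and \ref{minimal} is exactly what the paper does, and your pairing argument via $n_{d-m}=d-n_m$ reproduces the first half of the paper's proof. But there is a genuine gap: your entire count presupposes $\gcd(a,d)=1$, and the proposition makes no such assumption --- it allows any $a$ with $2\le a\le d-1$, and the hypothesis $r\ge 2$ guarantees that values of $a$ not coprime to $d$ actually occur and cannot be removed by passing to an equivalent representative. For instance, for $d=6$ and $a=3$ the equivalence class is $\{3,4\}$ and neither element is invertible in $\ZZ/6\ZZ$ (this is case (iii) of Lemma \ref{descriptionclasses}). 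In that situation $1/a$ does not exist, the map $m\mapsto n_m$ is not defined, and the congruence $m+an\equiv 0\pmod{d}$ has solutions only for $m$ divisible by $g=\gcd(a,d)$, each such $m$ admitting $g$ values of $n$ rather than one. The paper devotes the second (and longer) half of its proof precisely to this case: writing $a=g\alpha$ and $d=g\delta$, it parametrizes the $d+2$ solution pairs in the form $(yg,\ y\bar{n}+x\delta)$ and runs an adapted involution $(m,n)\mapsto(d-m,\dots)$ to show that at least $\lfloor d/2\rfloor$ of them satisfy $m+n\le d$. Without handling this case your argument does not establish the proposition.

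A smaller point: your worry that both members of a complementary pair might land on the boundary $m+n_m=d$ is harmless, because a pair $(m,n)$ with $m+n=d$ still yields a genuine degree-$d$ generator, namely $y^mz^{d-m}$ (it is merely not divisible by $x$). So equality on both sides produces two invariant monomials rather than zero, and the count only improves; this is exactly how the paper treats it, concluding $\mu(I_a)>\lfloor d/2\rfloor+3$ in that situation. Unlike the prime case of Theorem \ref{numberinv}, where such boundary solutions are shown to be impossible, for composite $d$ they do occur, so you should count them rather than try to rule them out.
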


\begin{proof} By Theorems \ref{galoismonomial}, \ref{upper} and \ref{minimal}, $I_a$ is a monomial minimal GT-system. Thus, it only remains
to compute $\mu (I_a)$. We distinguish two cases.

If we consider an equivalence class that can be represented by a number
$a$ coprime with $d$, we can argue as in the proof of Theorem
\ref{numberinv}. For any $m$ with $1\leq m\leq d-1$, we define the
number $n_m>0$, the minimal positive integer such that $an_m+m$ is a
multiple of $d$ of the form $kd, k>0$. Note that for $m=1$ we have $n_1
\neq d-1$. Otherwise it would be $a(d-1)+1=kd$ with $1\leq k \leq a-1$,
so $d=(a-1)/(a-k)$, which is impossible. We note that the integer
$n_{d-m}$ associated to $d-m$ is equal to $d-n_m$. Therefore $m+n_m\leq
d$ if and only if $(d-m)+n_{d-m}\geq d$.

We have to count the integers $m$ such that $m+n_m\leq d$. It is enough
to analyze only  $\lfloor d/2\rfloor$ pairs.
If, for any $m, n_m \neq d-m$, then only one of the possibilities: $m+n_m<d$
or $(d-m)+(d-n_m)<d$ is true,  and we can conclude that
$\mu(I_a)=\lfloor d/2\rfloor +3$. Otherwise, if $n_m=d-m$ for some $m$, then
both pairs $(m,n_m=d-m)$ and $(d-m, m)$ give invariant monomial and
$\mu(I_a)>\lfloor d/2\rfloor+3.$

Assume now that $a$ and $d-a+1$ are both non invertible in $\ZZ/d\ZZ$,
i.e. that both are not coprime with $d$. Let $g=\gcd(d,a)>1$ and write
$a=g\alpha$ and   $d=g\delta$.   
We look for the pairs $(m,n)$ with $m\geq 0$, $n\geq 0$, $m+n\leq d$,
such that $m+an=kd$, for some $k$. This last relation is possible only
if $m=0$ or $m$ is a multiple of  $\gcd(d,a)=g$. So $m$ has to be of the
form $yg$, with $0\leq y\leq \delta$.

If $m=0$, then $0=kd-an$, i.e. $n=kd/a=k\delta/\alpha$, which implies
that $\alpha$ divides $k$, so $n=x\delta$, with $0\leq x\leq g$.

If $m=g$, we look for $n,k$ such that $g=kd-an$. Let $\bar{k},\bar{n}$
be the  integers with $\bar{k}>0$ minimum such that
$g=\bar{k}d-\bar{n}a$, i.e. $1=\bar{k}\delta-\bar{n}\alpha$. So also
$\bar{n}>0$. The other integers $k,n$ are of the form $\bar{k}+x\alpha,
\bar{n}+x\delta$, with $x\in \ZZ$. From them we get the pairs $(m,n)=(g,
\bar{n}+x\delta)$. Note that $0\leq n\leq d$ if and only if $0\leq x\leq
g-1$. Indeed  $\bar{n}+x\delta\leq d$ if and only if $x\leq
(d-\bar{n})/\delta=g-(\bar{n}/\delta)$; moreover from the minimality of
$\bar{k}$, we deduce $\bar{n}-\delta<0$, which gives $0<\bar{n}/\delta<1$.

Let now $m=yg$, with $1\leq y\leq \delta$. We clearly have the pairs
$(m,n)=(yg, y\bar{n}+x\delta)$. The condition $0\leq
n=y\bar{n}+x\delta\leq d$ becomes $-y\bar{n}/\delta\leq x\leq
g-y\bar{n}\delta$.

Altogether we have $d+2$ pairs: $g+1$ pairs with $m=0$, $g$ pairs with
$m=yg$ for any $1\leq y\leq \delta-1$, and $(d,0)$. We have to impose
now the condition $m+n\leq d$, i.e. $yg+y\bar{n}+x\delta\leq d$ to the
pairs different from $(d,0), (0,d), (0,0)$. But it is easy to check that
this last inequality is equivalent to
$(\delta-y)g+(\delta-y)\bar{n}+(g-\bar{n}-x)\delta\geq d$. It is clear
that $0\leq \delta-y\leq d$. Moreover $-(\delta-y)\bar{n}/\delta\leq
g-\bar{n}-x\leq g-(\delta-y)\bar{n}/\delta$ is equivalent to
$(d-g)-y\bar{n}\leq \delta x\leq d-y\bar{n}$, so  the pair $(\delta-y)g,
(\delta-y)\bar{n}+(g-\bar{n}-x)\delta$ is in the admissible range. We
conclude that there are at least $\lfloor d/2\rfloor+3$ invariant monomials.

\end{proof}

\begin{rem}\rm If $\mu(I_a)>\lfloor d/2\rfloor+3$, with $a$ invertible
modulo $d$, then $ mn_1+m = m(n_1+1)$ must be multiple of $d$. This
observation allows to construct a whole class of examples with $d=2^k$,
where the first $m$ such that $n_m=d-m$ is $2$, and $n_1+1=d/2$. It is
enough to take $a=(d/2)+1$ and then $n_1=(d/2)-1$. We can compute that
$\mu(I_a)=3d/4+2$.   In this case, all pairs $(m, d-m+1)$ with $m$ even
satisfy $n_m=d-m$. The first example is for $d=8, a=5, n_1=3, \mu(I_5)=8$.
For $d=16$, we get $a=9, n_1=7$, and $\mu(I_9)=14$.
\end{rem}

\begin{rem}\label{different_action} \rm
Not all actions  of a cyclic group on $K[x,y,z]_d$ can be represented by a matrix of the form $M_a$, if
$d$ has $3$ or more prime factors.
For instance if $d=42=2\cdot 3\cdot 7$ and $e$ is a primitive root of order $42$ of $1$, then the action of $M_{0,3,7}$ is not equivalent to the action of a matrix
of the form $M_a$ for any $a$, since it has the following set of monomials invariant under the action of $M_{0,3,7}$ and involving only two variables. They are: $$x^{36}z^6, x^{30}z^{12}, x^{24}z^{18}, x^{18}z^{24}, x^{12}z^{30}, x^6z^{36},x^{28}y^{14}, x^{14}y^{28},y^{21}z^{21}.$$  If we now consider the monomials invariant under the action of $M_{a}$ and involving only two variables,
we have monomials involving $x$ and $z$, and monomials involving $y$ and $z$ but never monomials involving $x$ and $y$.

\end{rem}


\section{Geometric properties of generalized classical Togliatti systems}\label{geomproperties}

This  section is entirely devoted to study the geometric properties
of the rational surfaces associated to  generalized classical Togliatti
systems.
To this end, we fix an integer $d$, $3\le d\in \ZZ$. We write $d=2k+\epsilon
$, $0\le \epsilon \le 1$ and we consider the monomial artinian ideal

$$I_d=(x^d,y^d,z^d,x^ky^kz^{\epsilon},\ldots
,x^2y^2z^{d-4},xyz^{d-2})\subset K[x,y,z].$$
Set $\PP^2=\Proj(K[x,y,z])$ and $\PP^{k+2}=\Proj(K[x_0,x_1,\ldots ,
x_{k+2}])$. We have seen in Theorem \ref{minimal} that $I_d$ is a minimal Togliatti
system which defines a Galois cover
$$\varphi _{I_d}:\PP^2\longrightarrow \PP^{k+2}$$ with Galois cyclic
group $\ZZ/d\ZZ$ represented by $\begin{pmatrix}
1&0&0\\
0&e^2&0\\
0&0&e
\end{pmatrix} $  where $e$ is a primitive $d$-th root of 1.

We denote by $S_d\subset \PP^{k+2}$ the rational surface image of $\varphi _{I_d}$.  We will prove that its homogeneous ideal $I(S_d)$
is generated by $1+{k\choose 2}$ quadrics if $d$ is even  and by
${k\choose 2}$ quadrics and $k$ cubics if $d$ is odd. To achieve our
result we need some basic facts on determinantal ideals that we recall
now for seek of completeness.

\begin{defn} \rm  Let  $\cA$ be a homogeneous $m\times n$ matrix, $m\ge n$. We denote by  $I(\cA)$ the
ideal of $R=K[x_0,x_1,\ldots ,x_N]$ generated by the  maximal minors of $\cA$ and we say that
$I(\cA)$
 is  a
\emph{determinantal} ideal  if $\depth I(\cA)=m-n+1$.
\end{defn}

Any determinantal ideal $I\subset R$ is Cohen-Macaulay (i.e. $\pd
(R/I)=\codim (I)$ or, equivalent $\dim (R/I)=\depth (R/I)$) with a
minimal free resolution given by the so called {\em Eagon-Northcott
complex}. In fact, we denote by
$f
:F\longrightarrow G$ the morphism of free graded $R$-modules of
rank $t+c-1$ and $t$, defined by the homogeneous $t \times (t+c-1)$
matrix $\cA$ associated to the determinantal ideal $I\subset R$ of
codimension $c$. The Eagon-Northcott complex
$$
0 \longrightarrow \bigwedge^{t+c-1}F \otimes S_{c-1}(G)^*\otimes
\bigwedge^tG^*\longrightarrow \bigwedge^{t+c-2} F \otimes S
_{c-2}(G)^*\otimes
\bigwedge ^tG^*\longrightarrow \cdots $$
$$\longrightarrow
\bigwedge^{t}F \otimes
S_{0}(G)^*\otimes \bigwedge^tG^* \longrightarrow R \longrightarrow
R/I\longrightarrow 0
$$
gives a minimal free $R$-resolution of $R/I$.

\begin{thm}\label{generators} Let $I_d\subset K[x,y,z]$ be a generalized
classical Togliatti system and set $R=K[x_0,x_1,\ldots ,x_{k+2}]$. Then,
the following holds:
\begin{itemize}
\item[(1)] If $d=2k+1$ is odd then $I(S_d)=I_2(\cA )$ where $$\cA
=\begin{pmatrix}x_3 & x_ 4 & \cdots & x_{k+1} & x_{k+2} & x_0x_1\\ x_4 &
x_ 5 & \cdots & x_{k+2} & x_{2} & x_3^2 \end{pmatrix} .$$
In particular, $I(S_d)$ is generated by ${k\choose 2}$ quadrics and $k$
cubics.
\item[(2)] If $d=2k$ is even then $I(S_d)=I_2(\cB )+(x_0x_1-x_3^2)$ where
$$\cB=\begin{pmatrix} x_3 & x_ 4 & \cdots & x_{k+1} & x_{k+2} \\ x_4 &
x_ 5 & \cdots & x_{k+2} & x_{2}  \end{pmatrix} .$$
In particular, $I(S_d)$ is generated by $1+{k\choose 2}$ quadrics.
\end{itemize}
\end{thm}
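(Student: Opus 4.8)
The plan is to prove both identities by the standard strategy for the homogeneous ideal of a projective toric surface: first exhibit enough relations (one inclusion, by direct substitution), and then force equality by showing that the candidate ideal is prime of the expected codimension. First I would fix the parametrization coming from $\varphi_{I_d}$: with $R=K[x_0,\dots,x_{k+2}]$ I set $x_0=x^d$, $x_1=y^d$, $x_2=z^d$ and $x_{2+j}=x^{k+1-j}y^{k+1-j}z^{d-2(k+1-j)}$ for $1\le j\le k$, so that $x_3,\dots,x_{k+2}$ run through the monomials $x^iy^iz^{d-2i}$ with $i$ decreasing from $k$ to $1$; in particular $x_3=x^ky^kz^{\epsilon}$ and $x_0x_1=x^dy^d$. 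With this labeling the entries of $\cA$ and $\cB$ form the catalecticant (Hankel) matrix of the sequence $x_3,x_4,\dots,x_{k+2},x_2$, with the extra column $(x_0x_1,x_3^2)$ appended in the odd case.

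The inclusion $\subseteq$ is then a routine verification. Under the parametrization the $2\times 2$ minor of two linear columns $i<j$ equals $x_{2+i}x_{3+j}-x_{3+i}x_{2+j}$, and both monomials collapse to $x^{2k+1-i-j}y^{2k+1-i-j}z^{2d-4k-2+2i+2j}$, hence the minor vanishes; the mixed minors $x_{2+i}\,x_3^2-x_{3+i}\,x_0x_1$ vanish precisely when $d$ is odd (the two resulting monomials agree iff $\epsilon=1$), whereas for $d$ even one gets instead the single relation $x_0x_1-x_3^2=0$, because then $x_3=x^ky^k$ and $x_0x_1=x^{2k}y^{2k}$. This establishes $J:=I_2(\cA)\subseteq I(S_d)$ for $d$ odd, and $J:=I_2(\cB)+(x_0x_1-x_3^2)\subseteq I(S_d)$ for $d$ even.

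For the reverse inclusion I would show that $J$ is prime of codimension $k$ and then use the elementary fact that, since $J\subseteq I(S_d)$ with both ideals prime and $S_d$ irreducible of dimension $2$ (so $\codim I(S_d)=k$), a containment of two primes of equal height in a polynomial ring forces $J=I(S_d)$. To get the codimension and primality I would exploit the determinantal structure together with the theory of $1$-generic matrices. The scroll part $\cB$ is the catalecticant of the \emph{distinct} variables $x_3,\dots,x_{k+2},x_2$, hence $1$-generic; thus $I_2(\cB)$ is prime, Cohen-Macaulay and of the expected codimension $k-1$, cutting out the cone over the rational normal curve of degree $k$ with vertex the line $\langle x_0,x_1\rangle$. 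In the even case I then check that $x_0x_1-x_3^2$ is a nonzerodivisor modulo $I_2(\cB)$ (it lies in no minimal prime, by inspection of degrees and variables), so $J$ attains codimension $k$ and $R/J$ remains Cohen-Macaulay, and I verify that the resulting section stays irreducible. In the odd case $\cA$ has a quadratic last column; there I would verify the $1$-genericity condition on the generalized entries directly (those from the linear columns are nonzero because the catalecticant is $1$-generic, and the only other one is the nonzero form in the quadratic column), concluding that $I_2(\cA)$ is prime of the expected codimension $k$, or else read irreducibility and reducedness off the explicit toric parametrization, $V(J)$ being the closure of the image of $\varphi_{I_d}$.

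The numerical count follows by sorting the maximal minors by type: minors of two of the $k$ linear columns give the $\binom{k}{2}$ quadrics, while each minor involving the quadratic column $(x_0x_1,x_3^2)$ gives one of the $k$ cubics, accounting for all $\binom{k+1}{2}$ generators of $I_2(\cA)$ when $d$ is odd; for $d$ even one instead has the $\binom{k}{2}$ quadric minors of $\cB$ together with the extra quadric $x_0x_1-x_3^2$, i.e. $1+\binom{k}{2}$ quadrics. The Cohen-Macaulayness of $I(S_d)$ is then exactly that of the determinantal ideal, read from the Eagon-Northcott resolution recalled before the statement. The step I expect to be the main obstacle is precisely this primality/expected-codimension claim, and within it the odd case: the quadratic entries of $\cA$ place us slightly outside the classical $1$-generic (linear-entry) framework, so the delicate point is to rule out any spurious lower-codimension component and to confirm that $V(J)$ is reduced, irreducible, and equal to $S_d$.
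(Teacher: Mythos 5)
Your first half --- setting up the parametrization, checking that the Hankel structure of the sequence $x_3,x_4,\dots,x_{k+2},x_2$ (common ratio $z^2/xy$) kills the minors of the linear columns, and that the column $(x_0x_1,x_3^2)$ fits the progression exactly when $\epsilon=1$ while giving $x_0x_1=x_3^2$ when $\epsilon=0$ --- is correct and is exactly how the paper obtains the inclusion $J\subseteq I(S_d)$. The divergence, and the gap, is in how you close the reverse inclusion. You propose to prove that $J$ is \emph{prime} of codimension $k$ and then invoke equality of primes of equal height. But the primality step is not carried out: (i) in the odd case, the $1$-generic machinery you appeal to is a theory for matrices of \emph{linear} forms, and ``generalized entries'' (scalar row/column operations) do not make sense for $\cA$, whose last column has degree-two entries, so you cannot ``verify the $1$-genericity condition directly''; (ii) in the even case, after checking that $x_0x_1-x_3^2$ is a nonzerodivisor modulo the prime $I_2(\cB)$ (which is fine), you simply assert that the quadric section of the cone over the rational normal curve ``stays irreducible''--- this is precisely the hard point and is not automatic (a quadric section of an irreducible cone can very well break up); (iii) your fallback, ``read irreducibility off the toric parametrization, $V(J)$ being the closure of the image of $\varphi_{I_d}$,'' is circular: the inclusion $S_d\subseteq V(J)$ gives no control over possible extra components of $V(J)$.

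The paper avoids primality altogether and closes the argument by a dimension-and-degree comparison: $J$ is determinantal of the expected codimension $k$, hence Cohen--Macaulay and unmixed (this is the content of the Eagon--Northcott resolution in the following theorem), $\deg(R/J)=d$ (in the even case $k\cdot 2$; in the odd case one reads it off the resolution or Thom--Porteous), while $\deg S_d=d^2/\deg\varphi_{I_d}=d$ since $\varphi_{I_d}$ is a degree-$d$ cover defined by forms of degree $d$. An unmixed ideal contained in a prime of the same dimension and the same degree must equal that prime, which yields $J=I(S_d)$ without ever proving $J$ prime a priori. I would recommend replacing your primality step by this degree comparison (or, equivalently, using it to \emph{deduce} the irreducibility you want); note that even this route still requires you to justify that $\operatorname{codim} J=k$ in the odd case, i.e.\ that $V(I_2(\cA))$ has dimension $2$, a point the paper also leaves to the reader. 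Your count of generators ($\binom{k}{2}$ quadrics and $k$ cubics, resp.\ $1+\binom{k}{2}$ quadrics) is correct and agrees with the paper.
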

\begin{proof} (1) The morphism $\varphi _{I_d}:\PP^2\longrightarrow
\PP^{k+2}$ is defined by sending $$(x,y,z)\mapsto
(x^d,y^d,z^d,x^ky^kz,\ldots ,xyz^{d-2}).$$ Therefore, we easily check
that $I_2(\cA)\subset I(S_d)$. The equality follows from the fact that
$\dim (R/I(S_d))=\dim(R/I_2(\cA))=3$ and $\degree (I_2(\cA))=\degree (I(S_d))=d$.

(2) It is analogous.
\end{proof}

\begin{thm} With the above notation, the following holds:
\begin{itemize}
\item[(1)] If $d=2k+1$ is odd then $I(S_d)$ is a determinantal ideal
(and, hence, Cohen-Macaulay) with the following minimal free resolution:
$$0\longrightarrow R(-k-2)^k\longrightarrow \cdots  \longrightarrow
R(-4)^{3{k\choose 4}}\oplus R(-5)^{3{k\choose 3}} \longrightarrow $$
 $$R(-3)^{2{k\choose 3}}\oplus R(-4)^{2{k\choose 2}} \longrightarrow
R(-2)^{{k\choose 2}}\oplus R(-3)^{k}\longrightarrow R \longrightarrow
R/I(S_d) \longrightarrow 0.$$
\item[(2)] If $d=2k$ is even then $I(S_d)$ is a  Cohen-Macaulay ideal with
the following minimal free resolution:
$$
0\longrightarrow  R(-k-2)^k \longrightarrow R(-k-1)^{(k-1){k\choose
k-1}} \oplus  R(-k)^k
\longrightarrow \cdots \longrightarrow $$
 $$R(-5)^{2{k\choose 3}}\oplus R(-4)^{3{k\choose 4}} \longrightarrow
R(-4)^{k\choose 2}\oplus R(-3)^{2{k\choose 3}} \longrightarrow
R(-2)^{1+{k\choose 2}}\longrightarrow R \longrightarrow
R/I(S_d)\longrightarrow  0.
$$
\end{itemize}
\end{thm}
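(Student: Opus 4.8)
The plan is to deduce both resolutions from the Eagon--Northcott complex recalled above, which by the quoted structure theorem \emph{is} the minimal free resolution of a determinantal ideal; the only substantive work is to track the graded twists, and in the even case to pass from $I_2(\cB)$ to $I(S_d)$ by a single mapping cone. In both cases Theorem \ref{generators} has already identified the ideal, and the Krull dimension $\dim(R/I(S_d))=3$ used there fixes the relevant codimensions.

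For the odd case $d=2k+1$, Theorem \ref{generators}(1) presents $I(S_d)=I_2(\cA)$ as determinantal, and since $\dim(R/I(S_d))=3$ it has codimension $k$; hence its minimal free resolution is the Eagon--Northcott complex of the $2\times(k+1)$ matrix $\cA$. To read off the twists I would grade the map $\cA\colon F\to G$ so that it is homogeneous of degree $0$: take $G=R^2$ in degree $0$ and $F=\bigoplus_j R(-b_j)$ with $b_j=1$ for the $k$ columns of $\cA$ having linear entries and $b_{k+1}=2$ for the last column $(x_0x_1,\,x_3^2)^{t}$. The $i$-th term $\bigwedge^{i+1}F\otimes S_{i-1}(G)^*\otimes\bigwedge^2 G^*$ then splits according to whether the distinguished degree-$2$ basis vector of $F$ occurs in a given wedge: the wedges avoiding it give $\binom{k}{i+1}$ summands $R(-(i+1))$, those containing it give $\binom{k}{i}$ summands $R(-(i+2))$, and the factor $S_{i-1}(G)^*\otimes\bigwedge^2 G^*$ of rank $i$ multiplies both counts by $i$. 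This produces exactly the summands $R(-(i+1))^{i\binom{k}{i+1}}\oplus R(-(i+2))^{i\binom{k}{i}}$ of the stated complex, with only the second surviving at the extreme $i=k$.

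For the even case $d=2k$, Theorem \ref{generators}(2) gives $I(S_d)=I_2(\cB)+(g)$ with $g=x_0x_1-x_3^2$ and $\cB$ a $2\times k$ matrix of linear forms, so $I_2(\cB)$ is determinantal of codimension $k-1$ with Cohen--Macaulay quotient and Eagon--Northcott resolution $\mathbf{E}_\bullet$, whose graded terms are computed as in the previous paragraph (all columns now linear). Since $\dim(R/I(S_d))=3$, the codimension of $I(S_d)$ is $k$, one more than that of $I_2(\cB)$; because $R/I_2(\cB)$ is Cohen--Macaulay, hence unmixed of codimension $k-1$, the form $g$ lies in no associated prime and is therefore a nonzerodivisor on $R/I_2(\cB)$. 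In particular $R/I(S_d)=\bigl(R/I_2(\cB)\bigr)/(g)$ is again Cohen--Macaulay. I would then resolve it by the mapping cone of multiplication by $g$, equivalently by tensoring $\mathbf{E}_\bullet$ with the Koszul complex $0\to R(-2)\xrightarrow{\,g\,}R\to 0$; acyclicity holds precisely because $g$ is a nonzerodivisor, and the $i$-th term is $E_i\oplus E_{i-1}(-2)$. Reading off the twists of the $E_i$ and shifting the second summand by $2$ yields the graded Betti numbers.

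The graded bookkeeping is routine; the genuine points to verify are the regularity of $g$ modulo $I_2(\cB)$ (which simultaneously gives acyclicity of the cone and the Cohen--Macaulayness of $R/I(S_d)$) and the minimality of the resulting cone. The latter I expect to be the cleanest step: the connecting maps of the cone are $g\cdot\mathrm{id}$, with entries in the homogeneous maximal ideal $\mathfrak{m}$, while the remaining entries are those of the already-minimal Eagon--Northcott differentials, so no entry is a unit and the cone carries no splittable summand. The one place demanding individual attention is the top of the resolution, where the uniform term pattern degenerates and the multiplicities at the last two steps must be computed directly from $E_{k-1}\oplus E_{k-2}(-2)$ and $E_{k-1}(-2)$ rather than from the generic formula.
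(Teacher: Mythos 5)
Your proposal is correct and follows essentially the same route as the paper: part (1) is the Eagon--Northcott complex of the graded map $R(-1)^k\oplus R(-2)\to R^2$ determined by $\cA$, and part (2) is the Eagon--Northcott resolution of $I_2(\cB)$ followed by a mapping cone for the hypersurface section by $x_0x_1-x_3^2$ (your Koszul-tensoring formulation is the same construction as the paper's short exact sequence $0\to I_2(\cB)(-2)\to I_2(\cB)\oplus R(-2)\to I(S_d)\to 0$). Your explicit checks that $x_0x_1-x_3^2$ is a nonzerodivisor on $R/I_2(\cB)$ (via unmixedness) and that the cone is minimal are details the paper leaves implicit, but they do not change the argument.
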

\begin{proof} (1) By Theorem \ref{generators}(1) we know that
$I(S_d)=I_2(\cA )$ where $$\cA =\begin{pmatrix}x_3 & x_ 4 & \cdots &
x_{k+1} & x_{k+2} & x_0x_1\\ x_4 & x_ 5 & \cdots & x_{k+2} & x_{2} &
x_3^2 \end{pmatrix} .$$ Associated to $\cA$ we have a morphism
$F:=R(-1)^k\oplus R(-2)\stackrel{f}{\longrightarrow} G:=R^2$ of free
graded $R$-modules
 and by the Eagon-Northcott complex $I(S_d)$ has the following minimal
free  $R$-resolution:
 $$0\longrightarrow \bigwedge ^{k+1}F\otimes S_{k-1}G^*\longrightarrow
\cdots \longrightarrow \bigwedge ^{4}F\otimes S_{2}G^*\longrightarrow
\bigwedge ^{3}F\otimes G^*\longrightarrow \bigwedge ^{2}F\longrightarrow
R \longrightarrow  R/I(S_d)\longrightarrow  0$$
which gives what we want.

(2)   By Theorem \ref{generators}(2) we know that  $I(S_d)=I_2(\cB
)+(x_0x_1-x_3^2)$ where $$\cB=\begin{pmatrix} x_3 & x_ 4 & \cdots &
x_{k+1} & x_{k+2} \\ x_4 & x_ 5 & \cdots & x_{k+2} & x_{2}
\end{pmatrix} .$$ By the Eagon-Northcott complex $I_2(\cB )$ has the
following minimal free $R$-resolution
$$0 \longrightarrow R(-k)^k
\longrightarrow R(-k+1)^{(k-1){k\choose k-1}}\longrightarrow\cdots
\longrightarrow
R(-4)^{3{k\choose 4}} \longrightarrow $$
$$
R(-3)^{2{k\choose 3}} \longrightarrow
R(-2)^{k\choose 2} \longrightarrow R \longrightarrow
R/I_2(\cB)\longrightarrow  0.$$
To find a minimal free $R$-resolution of $I(S_d)=I_2(\cB )+(x_0x_1-x_3^2)$
we consider the short exact sequence
$$
0\longrightarrow I_2(\cB)(-2) \stackrel{\psi }{\longrightarrow}
I_2(\cB)\oplus R(-2)\stackrel{\rho}{\longrightarrow}  I(S_d)=I_2(\cB
)+(x_0x_1-x_3^2)\longrightarrow 0
$$
where $\psi(F)=(F(x_0x_1-x_3^2),F)$ and $\rho (G,H)=G-(x_0x_1-x_3^2)H$.
Applying the mapping cone procedure to the following diagram
$$
 \begin{array}{ccccccccccc}
 & & 0 & & 0 \\
  & & \downarrow & & \downarrow & &  \\
  & & R(-k-2)^k & & R(-k)^k\\
   & & \downarrow & & \downarrow & &  \\
 & &  \vdots & & \vdots \\
      & & \downarrow & & \downarrow & &  \\
  &&  R(-6)^{3{k\choose 4}} & &  R(-4)^{3{k\choose 4}}  \\
      & & \downarrow & & \downarrow & &  \\
& & R(-5)^{2{k\choose 3}} & &  R(-3)^{2{k\choose 3}} \\
 & & \downarrow & & \downarrow & &  \\
 & & R(-4)^{k\choose 2} & & R(-2)^{k\choose 2}\oplus R(-2) \\
 & & \downarrow & & \downarrow & &  \\
 0 & \longrightarrow & I_2(\cB)(-2) & \stackrel{\psi }{\longrightarrow}
&  I_2(\cB)\oplus R(-2) & \stackrel{\rho}{\longrightarrow} &  I(S_d) &
\longrightarrow & 0 \\
 & & \downarrow & & \downarrow &&   \\
 & & 0 & & 0
 \end{array}
$$
we get the minimal free $R$ resolution of $I(S_d)$:
$$
0\longrightarrow  R(-k-2)^k \longrightarrow R(-k-1)^{(k-1){k\choose
k-1}} \oplus  R(-k)^k
\longrightarrow \cdots \longrightarrow $$
 $$R(-5)^{2{k\choose 3}}\oplus R(-4)^{3{k\choose 4}} \longrightarrow
R(-4)^{k\choose 2}\oplus R(-3)^{2{k\choose 3}} \longrightarrow
R(-2)^{1+{k\choose 2}}\longrightarrow R \longrightarrow
R/I(S_d)\longrightarrow  0.
$$
\end{proof}

\subsection{Singularities of $S_d$}

We describe now the singular points of the surface $S_d$. The morphism
$\varphi_{I_d}$ is unramified outside the three fundamental points of
$\PP^2$: $E_0=[1,0,0]$, $E_1=[0,1,0]$, $E_2=[0,0,1]$. They are sent by
$\varphi_{I_d}$ to the singular points of $S_d$, $P_i:=\varphi(E_i)$,
$i=1,2,3$, that are cyclic quotient singularities: $P_0$, $P_1$ are  of type ${\frac{1}{ d}}(1,2)$ and
$P_2$ is  of type ${\frac{1}{ d}}(d-2,d-1)$.

\begin{prop} The points $P_0$, $P_1$ have multiplicity $k$ on $S_d$ and
tangent cone defined by the equations $\rk \cB<2$ and $x_1=0$ (resp.
$x_0=0$). The point $P_2$ is a double point with tangent cone the pair of planes of equations
$x_0x_1=x_3=\dots =x_k=x_{k+1}=0$.
\end{prop}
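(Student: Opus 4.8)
The plan is to analyze the local structure of $S_d$ at each of the three singular points $P_0, P_1, P_2$ by understanding the tangent cone, which is the cone over the projectivized tangent directions, equivalently the lowest-degree part of the local equations of $S_d$ at each point. Since $S_d = \varphi_{I_d}(\PP^2)$ and the ideal $I(S_d)$ is explicitly known from Theorem \ref{generators}, I would first pass to affine coordinates centered at each $P_i$ and extract the initial forms of the generators of $I(S_d)$. The multiplicity of a point equals the degree of its tangent cone, so the claim about multiplicities will follow once the tangent cones are identified.

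First I would treat $P_0 = \varphi_{I_d}(E_0)$, the image of $E_0 = [1,0,0]$. In the chart where the coordinate $x_0$ (corresponding to $x^d$) is nonzero, I would dehomogenize and write down the local equations coming from the quadratic generators $I_2(\cB)$ and the relation $x_0 x_1 - x_3^2$ (for $d$ even; for $d$ odd the analogous generators from $\cA$). The key observation is that near $P_0$ the variables $x_1$ (the image of $y^d$) and those $x_j$ involving positive powers of $y$ vanish to first order, while the chart coordinate stays a unit. After setting $x_0 = 1$, the relation $x_0 x_1 = x_3^2$ becomes $x_1 = x_3^2$, so $x_1$ is not an independent tangent direction, and the initial forms of the minors of $\cB$ produce exactly the condition $\rk \cB < 2$. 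I expect the tangent cone to be cut out by $\rk \cB < 2$ together with $x_1 = 0$; computing its degree via the Eagon–Northcott-type structure of $I_2(\cB)$ (a rational normal scroll, whose degree as a subvariety of the appropriate linear span is $k$) should yield multiplicity $k$. The case of $P_1$ is symmetric under the involution exchanging the roles of $x$ and $y$, giving the tangent cone $\rk \cB < 2$ and $x_0 = 0$.

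Next I would analyze $P_2 = \varphi_{I_d}(E_2)$, the image of $E_2 = [0,0,1]$, working in the chart where $x_2$ (the image of $z^d$) is a unit. Here the geometry is different: as the monomials invariant under the action all carry $z$ to high powers, the local picture degenerates to a double point. I would show that the initial forms of the local equations are spanned by $x_0 x_1$ together with the linear forms $x_3, \ldots, x_{k+1}$, so the tangent cone is the reducible quadric $x_0 x_1 = x_3 = \cdots = x_{k+1} = 0$, a pair of planes meeting along a line. Its degree is $2$, confirming that $P_2$ is a double point with the asserted tangent cone.

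The main obstacle will be justifying that the listed equations are \emph{exactly} the initial ideal at each point — that is, that no lower-degree relations are hidden among the minors and that the initial forms I write down generate the full tangent cone rather than a larger scheme. The cleanest route is probably to combine the explicit minimal free resolution (which pins down the Hilbert function, hence the local multiplicities via the degree and codimension data) with a direct local computation confirming the stated initial forms; the cyclic quotient singularity types ${\frac{1}{d}}(1,2)$ and ${\frac{1}{d}}(d-2,d-1)$, already recorded before the statement, give an independent consistency check, since the multiplicity and tangent cone of a cyclic quotient singularity are determined by its type and can be matched against the toric local model of the weighted projective plane $\PP(a,b,c)$.
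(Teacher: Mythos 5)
Your plan is essentially the paper's proof: the authors work in the affine charts $x_0=1$ (resp.\ $x_2=1$), read off the initial forms of the generators of $I(S_d)$ given by Theorem \ref{generators}, and arrive at exactly the tangent-cone equations you describe, with the multiplicity $k$ at $P_0,P_1$ coming from the degree-$k$ determinantal locus $\rk \cB<2$ and the multiplicity $2$ at $P_2$ from the pair of planes. The subtlety you rightly flag --- that the initial forms of the chosen generators must be shown to generate the whole initial ideal (e.g.\ reducing $x_1x_4=\dots=x_1x_2=0$ to $x_1=0$) --- is passed over silently in the paper as well, so your version is, if anything, slightly more careful.
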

\begin{proof}
Let us compute first the tangent cone to $S_d$ at $P_0$. We work on the
affine chart $x_0=1$, with affine coordinates $x_1, \ldots, x_{k+2}$. If
$d$ is odd,  looking at the initial forms of the polynomials in the
ideal of $S_d$, it follows from Theorem \ref{generators}  that the
equations of the tangent cone are $\rk \cB<2$ and
$x_1x_4=x_1x_5=\dots=x_1x_{k+2}=x_1x_2=0$, which reduce to $\rk \cB<2$
and $x_1=0$.  If $d$ is even, we get immediately  the same equations.
The case of $P_1$ is similar.

We work then on the affine chart $x_2=1$ with affine coordinates $x_0,
x_1, x_3, \ldots, x_{k+2}$. If $d$ is odd, the tangent cone at $P_2$ is
defined by $\rk \cB'<2$, $x_3=x_4=\dots=x_{k+1}=x_0x_1=0$, where
$$\cB'=\begin{pmatrix} x_3 & x_ 4 & \cdots & x_{k+1}  \\ x_4 & x_ 5 &
\cdots & x_{k+2}  \end{pmatrix} .$$ This gives the equations
$x_3=x_4=\dots=x_{k+1}=x_0x_1=0$. If $d$ is even, we get directly the
same equations.
\end{proof}

\begin{rem} \rm
It is interesting to remark that the surface $S_d$ is also a Galois covering of $\PP^2$, with Galois
group $\ZZ/d\ZZ$. The covering map $S_d\to \PP^2$ composed with $\varphi_{I_d}$
is  $\PP^2\to\PP^2$, $[x,y,z]\to[x^d, y^d, z^d]$.
\end{rem}

\section{Relations with linear configurations}\label{config}

We establish now a link between GT-systems and linear configurations.

\begin{thm} \label{configurations} Fix an integer $d\ge 2$ and an
integer $a$, $2\le a\le d-1$. Let $I_a\subset K[x,y,z]$ be the
artinian ideal generated by all forms of degree $d$ invariant under the
action of $M_a$. Then, the following holds.
\begin{itemize}  \item[(1)] $I_a$ is a monomial minimal GT-system.

\item[(2)] For any linear form $L=\alpha x+\beta y+\gamma z$ we define
$L[j]:=\alpha x+\beta e^j y+\gamma e^{aj}z$, $j\ge 0$. It holds:
$$L\cdot \prod _{j=1}^{d-1}L[j]\in I_a.$$
\item[(3)] For any integer $0\le i\le d-1$ we define $L_{i,j}:=x+e^{i}y+e^{j}z$.
The set of $d^2$ lines $\{L_{i,j}\}_{0\le i,j\le d-1}$ gives rise to a
Ceva configuration $C(d)=(3d_d,d^2_3)$ of $d^2$ lines  and $3d$ points
with $d$ lines through each point and 3 points on each line.
\end{itemize}
\end{thm}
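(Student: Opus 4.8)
The plan is to prove the three parts of Theorem \ref{configurations} in order, since part (1) is essentially a citation of earlier results, part (2) generalizes the key computation from Proposition \ref{galois_wlp}, and part (3) is a combinatorial verification of the incidence data of a Ceva configuration.

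For part (1), I would simply invoke the machinery already established. By Theorem \ref{galoismonomial} the ideal $I_a$ is monomial, by Theorem \ref{upper} it is a GT-system (note $\mu(I_a)\le d+1$ holds because the linear congruence $m+an\equiv 0\pmod d$ has exactly $d$ incongruent solutions when $a\neq 1$), and by Theorem \ref{minimal} it is minimal, since $M_a$ is precisely the matrix appearing in that theorem. So part (1) requires no new work beyond assembling these citations.

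For part (2), the strategy mirrors the proof of Proposition \ref{galois_wlp}. The point is that the product $F:=L\cdot\prod_{j=1}^{d-1}L[j]$ is invariant under the action of $M_a$. Indeed, applying $M_a$ permutes the roots of unity $e^j$ cyclically, so $M_a$ sends the factor $L[j]$ to $L[j+1]$ (indices mod $d$, with $L[0]=L$); thus $M_a$ merely permutes the $d$ factors of the product and fixes $F$. Since $F$ is a form of degree $d$ invariant under $M_a$, and $I_a$ is by definition generated by \emph{all} such invariant forms, we conclude $F\in I_a$. The only mild subtlety is checking that the cyclic permutation of factors is exact: writing $L[j]=\alpha x+\beta e^{j}y+\gamma e^{aj}z$ and applying $M_a=\mathrm{diag}(1,e,e^a)$ gives $\alpha x+\beta e^{j+1}y+\gamma e^{a(j+1)}z=L[j+1]$, so the indices shift by one and wrap around modulo $d$, as required.

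For part (3), I would verify directly that the $d^2$ lines $L_{i,j}=x+e^i y+e^j z$ together with an appropriate set of $3d$ points realize the numerical incidences $(3d_d, d^2_3)$. \textbf{The main obstacle} is identifying the $3d$ points and checking the incidence counts. The natural candidates are the $3d$ points obtained from the three pencils: the base points of the three families of lines. Concretely, fixing $j$ and letting $i$ vary, the $d$ lines $\{L_{i,j}\}_i$ all pass through the point $[1:0:-e^{-j}]$ on the line $y=0$; similarly fixing $i$ gives $d$ lines through a point on $z=0$; and the lines with $i-j$ constant pass through a common point on $x=0$. This yields $3d$ points (one on each coordinate line, for each of $d$ values), with $d$ lines through each such point. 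Each line $L_{i,j}$ then passes through exactly one point of each of the three families, giving $3$ points per line. I would verify the defining relation $p\cdot s = \ell\cdot r$, namely $3d\cdot d = d^2\cdot 3$, which holds, and confirm that no further incidences occur (three generic lines of the configuration meet in three distinct points), so the configuration is exactly $C(d)=(3d_d, d^2_3)$. The bookkeeping of which point lies on which line, and checking that the three base points attached to a fixed line $L_{i,j}$ are genuinely the only configuration points on it, is where the care is needed.
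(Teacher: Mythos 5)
Your proposal is correct and follows essentially the same route as the paper: part (1) by the same citations, part (3) by exhibiting the same $3d$ base points of the three pencils and counting incidences. For part (2) you construct the invariant product directly (the cyclic permutation of the factors $L[j]$ under $M_a$), whereas the paper deduces the factorization of $C_{d-1}$ from the failure of WLP; but your direct computation is exactly the one already used in the paper's proof of Proposition \ref{galois_wlp}, so this is only a cosmetic difference.
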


\begin{proof} (1)  follows from Theorems \ref{galoismonomial}, \ref{upper} and \ref{minimal}.

(2) Since $I_a$ fails WLP from degree $d-1$ to degree $d$, we know that
for any line $L=\alpha x+\beta y+\gamma z$ there exists a form $C_{d-1}$
of degree $d-1$ such that $F=L\cdot C_{d-1}\in I_a$. By definition $I_a$
is generated by the monomials of degree $d$ invariant under the action of
  $M_a$. Therefore, $F=L\cdot C_{d-1}$ is invariant under the action of
  $M_a$, which implies that $L[j]:=\alpha x+\beta e^j y+\gamma e^{aj}z$,
$j\ge 1$, divides $F$ and we conclude that $C_{d-1}$ factorizes as $
\prod _{j=1}^{d-1}L[j]$ which proves what we want.

(3) We have a set $\cL=\{L_{i,j}\}_{0\le i,j\le d-1}$ of $d^2$ lines in
$\PP^2$ and we consider $\cP=\{p_1,\cdots ,p_{3d}\}$ the following set
of points
$$\begin{matrix} p_1=(1,0,-1) & p_2=(1,0,-e) & p_3=(1,0,-e^2) & \cdots &
p_d=(1,0,-e^{d-1}) \\
 p_{d+1}=(0,1,-1) & p_{d+2}=(0,1,-e) & p_{d+3}=(0,1,-e^2) & \cdots &
p_{2d}=(0,1,-e^{d-1}) \\
 p_{2d+1}=(1,-1,0) & p_{2d+2}=(1,-e,0) & p_{2d+3}=(1,-e^2,0) & \cdots &
p_{3d}=(1,-e^{d-1},0)
\end{matrix}$$
The pair  $(\cP,\cL)$ is a Ceva configuration  $C(d)$ since we have
 $d^2$ lines and $3d$ points such that any line $L_{i,j}$ contains 3
points and through each point $p_s$ we have $d$ lines.
\end{proof}

 Associated to the above Ceva configuration $C(d)$ we have an
arrangement $\cH _{d}$ of $d^2+3$ lines $\{x,y,z\}\cup \{L_{i,j}\}_{0\le
i,j\le d-1}$ in $\PP^2$ and the derivation bundle $\cD _0$ defined as
the kernel of the jacobian map:

 $$0\longrightarrow \cD _0\longrightarrow\cO ^3_{\PP^2}\stackrel{\nabla
F}{\longrightarrow} \cO _{\PP^2}(d^2+2)$$

\vskip 2mm
\noindent where $F=xyz\prod_{0\le i,j\le d-1}L_{i,j}$  and
$\nabla F =(\frac{\partial F}{\partial x},\frac{\partial F}{\partial
y},\frac{\partial F}{\partial z})$. The arrangement
$\cH _{d}$ of
$d^2+3$ lines in $\PP^2$ is said to be {\em free with exponents }
$(a,b)$ if the derivation  bundle $\cD _0$ splits as $\cD_0=\cO
_{\PP^2}(-a)\oplus \cO _{\PP^2}(-b)$. So, we are let to pose the natural
question whether the arrangement $\cH _{d}$ of lines associated to the
Ceva configuration $C(d)$ is free. It holds:

\begin{prop} With the above notation the arrangement $\cH _d$ is a free
arrangement of $d^2+3$ lines in $\PP^2$ if and only if $3\le d\le 4$.
Moreover, $\cH_3$ is an arrangement of 12 lines free with exponent (4,7)
and  $\cH_4$ is an arrangement of 19 lines free with exponents (9,9).
\end{prop}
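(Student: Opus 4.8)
The plan is to reformulate the freeness of $\cH_d$ as the splitting of the rank-two reflexive sheaf $\cD_0$ and then decide that splitting from the numerical data of the arrangement. Setting $F=xyz\prod_{0\le i,j\le d-1}L_{i,j}$, the curve $C_d=V(F)\subset\PP^2$ is reduced of degree $n=d^2+3$, and by definition $\cH_d$ is free with exponents $(a,b)$ if and only if $\cD_0\cong\cO_{\PP^2}(-a)\oplus\cO_{\PP^2}(-b)$. From the defining sequence $0\to\cD_0\to\cO^3_{\PP^2}\to\cO_{\PP^2}(d^2+2)$ one computes $c_1(\cD_0)=-(d^2+2)$ and $c_2(\cD_0)=(d^2+2)^2-\tau(C_d)$, where $\tau(C_d)$ is the global Tjurina number. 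Hence a splitting forces $a+b=d^2+2$ and $ab=(d^2+2)^2-\tau(C_d)$, so $a,b$ must be the roots of $t^2-(d^2+2)t+\bigl((d^2+2)^2-\tau(C_d)\bigr)$. The first step is therefore to determine $\tau(C_d)$ by classifying the singular points of $\cH_d$.

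The singularities fall into three groups. The $3d$ base points of the Ceva configuration $C(d)$: each lies on $d$ of the lines $L_{i,j}$ and on exactly one coordinate line, hence is an ordinary point of multiplicity $d+1$. The three vertices of the coordinate triangle: no $L_{i,j}$ passes through $[1{:}0{:}0]$, $[0{:}1{:}0]$, $[0{:}0{:}1]$, so these are ordinary nodes. Finally the residual intersection points of the $L_{i,j}$ lying off the coordinate axes: three lines $L_{i_1,j_1},L_{i_2,j_2},L_{i_3,j_3}$ with pairwise distinct $i$'s, $j$'s and $(i-j)$'s are concurrent precisely when the points $(e^{i_s},e^{j_s})\in\mu_d\times\mu_d$ are collinear, a vanishing condition on sums of $d$-th roots of unity. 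I would show that for $d\le 4$ no such unexpected collinearity occurs, so the residual points are $\tfrac12 d^2(d-1)(d-2)$ ordinary nodes; together with the vertices this gives
\[
\tau(C_3)=9\cdot 3^2+12\cdot 1^2=93,\qquad \tau(C_4)=12\cdot 4^2+51\cdot 1^2=243,
\]
and the quadratic above then has integer roots $(4,7)$ and $(9,9)$, the predicted exponents.

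To upgrade ``the Chern classes are consistent'' to genuine freeness for $d=3,4$ I would either exhibit an explicit basis of the module of logarithmic derivations and invoke Saito's criterion (feasible by hand or with \cocoa{} in these small cases), or build $\cH_d$ from a free subarrangement by Terao's addition--deletion while tracking exponents; equivalently one verifies the du Plessis--Wall equality $\tau(C_d)=(n-1)^2-r(n-1-r)$ with $r=\mathrm{mdr}(F)$ equal to $4$ (resp.\ $9$), which certifies freeness since $r\le(n-1)/2$. For $d\ge 5$ I would prove non-freeness in one of two ways: whenever the discriminant $4\tau(C_d)-3(d^2+2)^2$ fails to be a perfect square, Terao's factorization theorem rules out freeness outright (for instance, if the residual points for $d=5$ are all nodes the discriminant is $-75$); in the remaining cases one shows that the du Plessis--Wall upper bound is strict, i.e.\ $\tau(C_d)<(n-1)^2-r(n-1-r)$ for the actual minimal syzygy degree $r=\mathrm{mdr}(F)$.

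The main obstacle is exactly the general-$d$ enumeration of the third group: deciding which triples (and higher tuples) of the lines $L_{i,j}$ are concurrent away from the axes, and thereby pinning down both $\tau(C_d)$ and $\mathrm{mdr}(F)$ uniformly in $d$. This is a question about collinear points in the grid $\mu_d\times\mu_d$, governed by vanishing sums of roots of unity, and it is precisely the input required to evaluate the freeness criterion. The cases $d=3,4$ are small enough to settle directly, which is what isolates them; the content of the ``only if'' direction is showing that for every $d\ge 5$ this enumeration pushes the arrangement off the free locus.
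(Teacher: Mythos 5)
Your plan follows the paper's proof essentially verbatim: both arguments extract $a+b=c_1(\cD_0)=d^2+2$ and $ab=c_2(\cD_0)$ from a putative splitting, compute $c_2$ from the combinatorics of the singular points (your Tjurina-number formula $c_2=(d^2+2)^2-\tau$ is identically equal, for line arrangements, to the relation $\sum_h\binom{h-1}{2}b_h=\binom{d^2+2}{2}-c_2$ from \cite[Remark 2.2]{FV} that the paper uses, and your values $\tau=93,243$ reproduce the paper's $c_2=28,81$ and exponents $(4,7)$, $(9,9)$), and then rule out $d\ge 5$ numerically while certifying genuine freeness for $d=3,4$ by an external criterion (the paper cites \cite[Theorem 2]{FV} where you would invoke Saito or du Plessis--Wall). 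The one point where you are more scrupulous than the paper is in flagging the possible unexpected concurrences of the lines $L_{i,j}$ away from the $3d$ configuration points: the paper silently takes the residual intersections to be nodes, which is exactly the input that makes the discriminant $(d^2+2)^2-4c_2=-d^2(d-2)(d-4)$ negative for every $d\ge 5$ and hence closes the ``only if'' direction in one stroke.
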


\begin{proof} If $\cH _d$  is free with exponents $(a,b)$ we necessarily
have $c_1(\cD _0)=a+b=d^2+2$ and $c_2(\cD _0)=ab$.  On the other hand,
if $b_h$ is the number of points of multiplicity $h$ in $\cD _0$, then we
have the following relation (see \cite{FV}, Remark 2.2):
$$ b_3+3b_4+6b_5+10b_6+\cdots ={d^2+2\choose 2}-c_2(\cD _0).$$

Therefore, if $\cH_d$ is a free arrangement necessarily $3\le d\le 4$. The fact that $\cH _3$ and $\cH _4$ are free arrangements of lines follows from \cite[Theorem 2]{FV}.
In addition,  $\cH_3$ is a free arrangement of 12 lines with 9 quadruple
base points. It is a free arrangement  with exponents (4,7). $\cH_4$ is
a free arrangement of 19 lines with 12 quintuple base points; it is a
free arrangement  with exponents (9,9).
\end{proof}

\begin{rem} \rm
The set of $3d$ points in a Ceva  configuration  $C(d)$ is the dual set
of the so called {\em Fermat arrangement} $\cF _d$:
$(x^d-y^d)(x^d-z^d)(y^d-z^d)=0$
which is an arrangement of $3d$ lines in $\PP^2$ with  $d^2$ triple
points and 3 points of multiplicity $d$. $\cF _d$ is a free arrangement
of lines in $\PP^2$ with exponents $(2d-2,d+1)$. In fact, let $\cD _0$
be derivation bundle  defined as
the kernel of the jacobian map:

 $$0\longrightarrow \cD _0\longrightarrow\cO ^3_{\PP^2}\stackrel{\nabla
F}{\longrightarrow} \cO _{\PP^2}(3d-1)$$

\vskip 2mm
\noindent where $F=(x^d-y^d)(x^d-z^d)(y^d-z^d)$.  We are looking for integers $a\le b$ such that  $c_1(\cD _0)=3d-1=a+b$ and $c_2(\cD _0)=ab={3d-1\choose 2}- d^2+3{d-1\choose 2}$.  Therefore, $(a,b)=(d+1,2d-2)$ and the freeness of $\cF _d$ follows from \cite[Theorem 2]{FV}.
\end{rem}


\end{document}